\author{Malo Jézéquel}
\date{\today}
\address{DMA, ENS, 45 rue d'Ulm, 75005, Paris \\ (email: malo.jezequel@ens.fr)}
\title{Parameter regularity of dynamical determinants of expanding maps of the circle and an application to linear response}
\newcommand{\B}{\mathcal{B}}
\renewcommand{\(}{\left(}
\renewcommand{\)}{\right)}
\newcommand{\F}{\mathbb{F}}
\newcommand{\N}{\mathbb{N}}
\newcommand{\R}{\mathbb{R}}
\newcommand{\C}{\mathbb{C}}
\newcommand{\M}{\mathcal{M}}
\newcommand{\op}{\text{Op}}
\newcommand{\A}{\mathcal{A}}
\newcommand{\D}[2]{\mathbb{D}\left(#1,#2\right)}
\renewcommand{\L}{\mathcal{L}}
\newcommand{\s}[3]{\int_{#1} #2 \mathrm{d}#3}
\newcommand{\set}[1]{\left\{#1\right\}}
\newcommand{\drond}[2]{\frac{\partial #1}{\partial #2}}
\newcommand{\ddeux}[3]{\frac{\partial^2 #1}{\partial #2 \partial #3}}
\renewcommand{\phi}{\varphi}
\renewcommand{\bar}[1]{\overline{#1}}
\newcommand{\hra}{\hookrightarrow}
\newcommand{\nhra}{\not{\hookrightarrow}}
\newtheorem{lm}{Lemma}[section]
\newtheorem{prop}[lm]{Proposition}
\newtheorem{cor}[lm]{Corollary}
\newtheorem{thm}[lm]{Theorem}
\theoremstyle{definition}
\newtheorem{rmq}[lm]{Remark}
\begin{document}

\begin{abstract}
In order to adapt to the differentiable setting a formula for linear response proved by Pollicott and Vytnova in the analytic setting, we show a result of regularity of dynamical determinants of expanding maps of the circle. The main tool is the decomposition of a transfer operator as a sum of a nuclear part and a "small" bounded part.
\end{abstract}

\maketitle

\section*{Introduction}

In this work we adapt to the differentiable setting the formula for linear response obtained by Pollicott and Vytnova \cite{Poll} in the analytic setting (based on an idea of Cvitanovic \cite{Cvi}). We recall the main argument: if $\tau  \mapsto T_\tau$ is an analytic curve of analytic expanding maps of the circle, defined on a neighbourhood $\left]-\epsilon,\epsilon\right[$ of $0$, and $g : S^1 \mapsto \R$ is an analytic function then for all $\tau \in \left]-\epsilon,\epsilon\right[$ and $u \in \R$ the map
\begin{equation}\label{determinant}
z \mapsto \exp\(- \sum_{n \geqslant 1} \frac{1}{n} \(\sum_{T_\tau^n x = x} \frac{\exp\(-u \sum_{k=0}^{n-1} g\(T_\tau^k x\)\)}{\left| \(T_\tau^n\)'\(x\)-1 \right|}\)z^n\)
\end{equation}
extends to an entire function $z \mapsto d\(z,u,\tau\)$ and the map $\(z,u,\tau\) \mapsto d\(z,u,\tau\)$ defined this way is analytic. Exploring the properties of this dynamical determinant $d$, Pollicott and Vytnova showed that for all $\tau \in \left]-\epsilon,\epsilon \right[$ we have
\begin{equation}\label{valeur}
\s{S^1}{g}{\mu_\tau} = -\frac{\drond{d}{u}\(1,0,\tau\)}{\drond{d}{z} \(1,0,\tau\)}
,\end{equation}
where $\mu_\tau$ denotes the unique measure invariant for $T_\tau$ which is absolutely continuous with respect to Lebesgue, and thus
\begin{equation}\label{derivee}
\left. \drond{}{\tau}\( \s{S^1}{g}{\mu_\tau} \) \right|_{\tau=0} = -\frac{\ddeux{d}{u}{\tau}\(1,0,0\)}{\drond{d}{z}\(1,0,0\)} + \frac{\ddeux{d}{\tau}{z}\(1,0,0\) \drond{d}{u}\(1,0,0\)}{\(\drond{d}{z}\(1,0,0\)\)^2}
.\end{equation}
Investigating this formula, it is easy to write it in terms of the value of the derivative at $\tau=0$ of $\tau \mapsto T_\tau$ on the periodic points of $T_0$ (see Remark \ref{fin}, in particular formula \eqref{explicite}).

Reading the proof of Lemma 3.1 of \cite{Poll}, it appears that if there is some $R >1$ such that, for all $u,\tau$ sufficiently close to $0$, the map defined by \eqref{determinant} extends on the disc of center $0$ and radius $R$ to a holomorphic function $z \mapsto d\(z,u,\tau\)$), then, if $d$ is $\mathcal{C}^1$, the formula \eqref{valeur} holds and, if $d$ is $\mathcal{C}^2$, the formula \eqref{derivee} holds.

Consequently, our main result is a result of regularity for dynamical determinants of expanding maps of the circle: Theorem \ref{main} which implies in particular (taking $t = \(\tau,u\)$, $ T_t = T_{\tau,u} = T_{\tau}$ and $g_{t,u} = g_{\tau,u} = -u g -\log \left|T_\tau'\right|$) that \eqref{valeur}  and \eqref{derivee} hold under some assumptions of regularity of $\tau \mapsto T_\tau$ and $g$ (see Corollary \ref{adaptation}). The hypotheses of Theorem \ref{main} are not the weakest possible, they have been chosen to make the exposition as simple and self-contained as possible (see remark \ref{weaken}).

To define the dynamical determinant and prove its main properties in the analytic case, one may use the work of Ruelle (see \cite{Ruelle1}). We will use here the approach exposed in the first part of \cite{Bal2} (see also \cite{Tsu}). In particular, the main ingredient of our proof will be the decomposition of a transfer operator (Proposition \ref{fonda}), which is an adaptation of Proposition 3.15 of \cite{Bal2}.

In \S \ref{1}, we set the notations involved in the Paley-Littlewood decomposition.

In \S \ref{2}, we use the Paley-Littlewood decomposition to state Lemma \ref{decloc}, which is a local version of the decomposition mentioned above.

Section \ref{3} is dedicated to the definition of the transfer operator and its decomposition as described above.

In \S \ref{4}, we define a "flat trace" for some operators. It may be proved that it coincides with the flat trace defined in Section 3.2.2 of \cite{Bal2} in most cases.

In \S \ref{5}, we prove some results of regularities of the eigenelements of the transfer operator, using a method due to Gouëzel, Keller and Liverani (see \cite{Ke-Li} and \cite{GLK}).

In \S \ref{6}, we state and prove our main result, Theorem \ref{main} and show how to deduce formulae \eqref{valeur} and \eqref{derivee} from it.

In the analytic setting, one can prove a similar statement in the case of Anosov diffeomorphisms of the torus using the work of Rugh (see \cite{R1} and \cite{R2}). It is likely that the method presented here adapt to the case of differentiable expanding maps in higher dimensions (see Remark \ref{weaken}) and to the case of Anosov diffeomorphisms (using the approach exposed in the second part of \cite{Bal2}).

This work is part of a master degree internship under the supervision of Viviane Baladi.

\section{The Paley-Littlewood decomposition}\label{1}
We shall denote by $\mathcal{S}$ the Schwartz class on $\R$ and by $\mathcal{S}'$ the space of tempered distributions on $\R$. If $s \in \R$ write $ H^{s} = \set{ \phi \in \mathcal{S}' : \left\|\phi\right\|_{H^s} < + \infty} $ where $ \left\| \phi \right\|_{H^s}^2 = \s{\R}{\(1+\xi^2\)^s \left|\widehat{\phi}\(\xi\)\right|^2}{\xi}$, if the Fourier transform $\widehat{\phi}$ of $\phi$ is locally $L^2$ (otherwise set $\left\| \phi \right\|_{H^s}^2 = + \infty$). Recall that if $s,t \in \R$, $\theta \in \left]0,1\right[$ and $\phi \in \mathcal{S}'$, Hölder's inequality implies that
\begin{equation*}
\left\| \phi \right\|_{H^{\theta s + \(1-\theta\) t}} \leqslant \left\| \phi \right\|_{H^s}^{\theta} \left\| \phi \right\|_{H^t}^{1-\theta}
.\end{equation*}
In particular, if $\B$ is some Banach space and $A : \mathcal{B} \to \mathcal{S}'$ is a linear operator then we have
\begin{equation}\label{interpolation}
\left\|A\right\|_{\L\(\B,H^{\theta s + \(1-\theta\) t}\)} \leqslant \left\|A\right\|_{\L\(\B,H^s\)}^\theta \left\|A\right\|_{\L\(\B,H^t\)}^{1-\theta}
,\end{equation}
where the space $\L\(\B,\B'\)$ of bounded linear operator from a Banach $\B$ to another $\B'$ is equipped with the operator norm $\|.\|_{\L\(\B,\B'\)}$.

If $\psi : \R \to \R$ is some $\mathcal{C}^{\infty}$ function of at most polynomial growth, denote by $\op \( \psi\)$ the pseudo-differential operator defined by
\begin{equation*}
\op \(\psi \) \phi \(x\) = \frac{1}{2 \pi} \s{\R}{e^{-i x \xi} \psi\(\xi\) \widehat{\phi}\(\xi\)}{\xi}
\end{equation*}
for $x \in \R$ and $\phi \in \mathcal{S}$. Thus $\op \(\psi\)$ is the operator of multiplication by $\psi$ in Fourier transform. We shall extend $\op \(\psi\)$ to an operator between Sobolev spaces as often as we may.

We can now present the Paley-Littlewood decomposition. Fix a $\mathcal{C}^{\infty}$ even function $\chi : \R \to \left[0,1\right]$ such that
\begin{equation*}
\chi\(x\) = 1 \textrm{ for } x \leqslant 1 \textrm{ and } \chi\(x\) = 0 \textrm{ for } x \geqslant 2
.\end{equation*}
Then for $n \in \N$ define $\psi_n$ by $\psi_0 = \chi$ and
\begin{equation*}
\psi_n\(x\) = \chi\(2^{-n}x\) - \chi\(2^{-n+1}x\)  
\end{equation*}
if $n \geqslant 1$ and $x \in \R$. We shall also need the functions $\tilde{\psi}_n$ defined by $\tilde{\psi}_0\(x\) = \chi\(\frac{x}{2}\)$ and
\begin{equation*}
\tilde{\psi}_n\(x\) = \chi\(2^{-n-1}x\) - \chi\(2^{-n+2}x\)
\end{equation*}
if $n \geqslant 1$ and $x \in \R$. Then the following assertions are easily proved :
\begin{equation*}
\sum_{n \geqslant 1} \psi_n = 1,
\quad
\forall n \geqslant 1, x \in \R : \psi_n\(x\) = \psi_1\(2^{-n+1}x\), \tilde{\psi}_n\(x\) = \tilde{\psi}_1\(2^{-n+1}x\),
\end{equation*}
\begin{equation*}
\forall n \geqslant 1 : \textrm{ supp } \psi_n \subseteq \set{ \xi \in \R : 2^{n-1} \leqslant \left| \xi \right| \leqslant 2^{n+1} } ,
\quad
\forall n \in \N : \psi_n \tilde{\psi}_n = \psi_n ,
\end{equation*}
\begin{equation*}
\forall n,l \in \N : \left|l-n\right| > 1 \Rightarrow \psi_n \psi_l = 0
.\end{equation*}
For all $m \in \N^*$ there is a constant $C_m$ such that for all $n \in \N$
\begin{equation}\label{decroit}
\left\|\psi_n^{\(m\)}\right\|_{L^{\infty}} \leqslant C_m 2^{-n m}
.\end{equation}
Finally, using Plancherel's formula, for all $s \in \R$ there is a constant $C>0$ such that for all $\phi \in H^s$ we have
\begin{equation}\label{equivalent}
C^{-1} \left\| \phi \right\|_{H^s}^2 \leqslant \sum_{n \geqslant 0} 2^{-ns} \left\|\op \(\psi_n\) \phi \right\|_{L^2}^2 \leqslant C \left\|\phi\right\|_{H^s}^2 
.\end{equation}
In particular for all $s,s' \in \R$ there is a constant $C$ such that for all $n \in \N$ we have
\begin{equation}\label{norme}
\left\|\op \(\psi_n\)\right\|_{\L\(H^s,H^{s'}\)} \leqslant C 2^{n\(s'-s\)}
.\end{equation}

\section{Local decomposition of the transfer operator}\label{2}

This section aims at proving the following lemma which is a variation on Lemma 2.22 of \cite{Bal2}. In this statement, $F_t$ must be thought as an inverse branch of an expanding map of the circle $T_t$.

\begin{lm}\label{decloc}
Let $r \in \N^*$ and $N \in \N$. Let $U$ be an open set of $\R^D$ for some integer $D$. Let $t\mapsto f_t \in \mathcal{C}^r\(\R\)$ be a $\mathcal{C}^N$ function on $U$ whose values are supported in a bounded open interval $\left]a,b\right[$. Let $t \mapsto F_t \in \mathcal{C}^{r+1} \(\R\)$ be a $\mathcal{C}^N$ function on $U$ whose values are diffeomorphisms with derivative bounded by $\Lambda$ on $\textrm{ supp } f_t$. For all $\phi \in \mathcal{S}$ and $t \in U$ set
\[
\M_t \phi = f_t.\(\phi \circ F_t\) 
.\]
Then for all $t \in U$ the operator $\M_t$ can be written as a sum
\[
\M_t = \(\M_t\)_b + \(M_t\)_c
\]
such that the following properties hold :
\begin{enumerate}[label=\roman*.]
\item for all $s < r$ there is a constant $c>0$ that only depends of $s$ and $r$ such that for all $t \in U$ :
\[
\left\|\(\M_t\)_b \right\|_{\L\(H^s,H^s\)} \leqslant c \left\|f_t \right\|_{\infty} \Lambda^s \sup_{\textrm{supp } f_t} \left|F'\right|^{-\frac{1}{2}}
;\]
\item for all integers $0 \leqslant k \leqslant N$, all $s > k + \frac{3}{2}$ and all $s' < r - 1 $ the map
\[
t \mapsto \(M_t\)_c \in \L_{nuc} \(H^s, H^{s'} \)
\]
is $\mathcal{C}^k$ on $U$;
\item for all integers $0 \leqslant k \leqslant N$, all $ k + \frac{3}{2} < s < r-1$ and all $\epsilon > 0 $ the map 
\[
t \mapsto \(\M_t\)_b \in \L\(H^s, H^{s-k-\epsilon}\) 
\]
is $\mathcal{C}^k$ on $U$.
\end{enumerate}
\end{lm}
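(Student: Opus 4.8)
The plan is to decompose $\M_t$ using the Paley--Littlewood machinery of \S\ref{1}, sending the high-frequency part into a bounded operator with small norm and the low-frequency part into a nuclear, smoothly varying operator. Concretely, for a threshold $\nu \in \N$ to be chosen, one writes $\op(\chi(2^{-\nu}\,\cdot\,)) + \sum_{n > \nu}\op(\psi_n) = 1$ and pre/post-composes $\M_t$ with this partition; the cleanest route is to truncate \emph{after} applying $\M_t$, i.e. to set $(\M_t)_c = \op(\chi(2^{-\nu}\cdot))\M_t$ and $(\M_t)_b = \left(1 - \op(\chi(2^{-\nu}\cdot))\right)\M_t = \sum_{n>\nu}\op(\psi_n)\M_t$. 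The point of doing it on the left is that the low-frequency cutoff $\op(\chi(2^{-\nu}\cdot))$ has range inside every $H^{s'}$ and is actually smoothing, which is what will give nuclearity; and the operator $\M_t$ itself is bounded $H^s \to H^s$ for $|s| < r$ by a Lasota--Yorke-type estimate (the ``transfer operator'' bound), so composing with a high-pass projector costs only a controlled factor.

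First I would record the basic mapping properties of $\M_t\phi = f_t\cdot(\phi\circ F_t)$: using that $f_t \in \mathcal{C}^r$ is compactly supported and $F_t$ is a $\mathcal{C}^{r+1}$ diffeomorphism with $|F_t'| \le \Lambda$ on $\mathrm{supp}\, f_t$, a change of variables plus the usual Sobolev estimates give $\|\M_t\|_{\L(H^s,H^s)} \le c\|f_t\|_\infty \Lambda^s \sup_{\mathrm{supp}\,f_t}|F_t'|^{-1/2}$ for $0 \le s < r$, and by duality (or by the same argument on the adjoint) also for $-r < s < 0$; the exponent $-1/2$ and the power $\Lambda^s$ come precisely from the Jacobian factor in the change of variables combined with interpolation \eqref{interpolation}. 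For (i), I then need that the high-pass projector $1 - \op(\chi(2^{-\nu}\cdot))$ is bounded on $H^s$ uniformly in $\nu$ (clear, since $0 \le 1-\chi \le 1$), giving $\|(\M_t)_b\|_{\L(H^s,H^s)} \le c'\|f_t\|_\infty\Lambda^s\sup|F_t'|^{-1/2}$; the constant is independent of $t$ and $\nu$, which is exactly what (i) asks (note that the statement (i) as printed has no $\nu$-dependence and no smallness, so this crude bound suffices — the ``smallness'' alluded to in the abstract is obtained later by taking $\nu$ large so that, on a smaller Sobolev scale, the operator norm of $(\M_t)_b$ beats the spectral radius; but that refinement is not part of this lemma's claim).

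Next I would prove (ii). Here $(\M_t)_c = \op(\chi(2^{-\nu}\cdot))\M_t$ maps $H^s \to H^{s'}$; I want to show this factors through a finite-dimensional-ish object or, more precisely, lies in $\L_{nuc}$ and depends on $t$ in a $\mathcal{C}^k$ way for $t \in U$. The key estimates are \eqref{norme}, $\|\op(\psi_n)\|_{\L(H^s,H^{s'})} \le C 2^{n(s'-s)}$, and the differentiated versions: for a multi-index $\alpha$ with $|\alpha| \le N$, $\partial_t^\alpha \M_t$ is again an operator of the same type — differentiating $f_t\cdot(\phi\circ F_t)$ in $t$ produces terms of the form $(\text{derivative of }f_t)\cdot(\phi\circ F_t)$ and $f_t\cdot((\partial\phi)\circ F_t)\cdot(\text{derivative of }F_t)$, i.e. a first-order loss in smoothness of $\phi$ per $t$-derivative. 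So $\partial_t^\alpha\M_t : H^s \to H^{s - |\alpha|}$ is bounded, with bounds uniform on compact subsets of $U$, provided $s - |\alpha| > -r$, i.e. $s > |\alpha| - r$; combined with the requirement $s > k + 3/2$ and $k \le N$ this is comfortably satisfied. Then $\partial_t^\alpha(\M_t)_c = \op(\chi(2^{-\nu}\cdot))\partial_t^\alpha\M_t : H^s \to H^{s'}$, and because $\op(\chi(2^{-\nu}\cdot))$ is smoothing (it lands in $\bigcap_{\sigma}H^\sigma$ with rapidly decaying weights) one gains arbitrarily many derivatives, which forces nuclearity: write $\op(\chi(2^{-\nu}\cdot)) = \sum_{n=0}^{\nu+1}\op(\psi_n)\op(\chi(2^{-\nu}\cdot))$ (a finite sum, since $\chi(2^{-\nu}\cdot)$ is supported in $|\xi|\le 2^{\nu+1}$), express each $\op(\psi_n)$ via an $L^2$-orthonormal basis of trigonometric-type functions localised at frequency $\sim 2^n$, and estimate the nuclear norm by summing the operator norms $2^{n(s'-s)}$ times the $H^s\to H^s$ bound of $\M_t$ — the geometric decay in $n$ makes the series converge and gives a nuclear-norm bound. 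The $\mathcal{C}^k$ dependence then follows from the fact that the map $t \mapsto \partial_t^\alpha\M_t$ is continuous into $\L(H^s,H^{s-|\alpha|})$ for $|\alpha| \le N$ (continuity in $t$ of $f_t$ and $F_t$ in the relevant $\mathcal{C}^r$/$\mathcal{C}^{r+1}$ topologies, again by standard composition estimates), composed with the fixed bounded-and-smoothing operator $\op(\chi(2^{-\nu}\cdot))$; passing from $\L$ to $\L_{nuc}$ in the regularity statement uses that the nuclear-norm bounds above are locally uniform.

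Finally (iii): $(\M_t)_b = \sum_{n > \nu}\op(\psi_n)\M_t$, and for $k + 3/2 < s < r-1$ I want $t \mapsto (\M_t)_b \in \L(H^s, H^{s-k-\epsilon})$ to be $\mathcal{C}^k$. The subtlety is that, unlike (ii), there is no smoothing here, so one cannot differentiate $k$ times in $t$ while staying in $\L(H^s,H^s)$ — each $t$-derivative of $\M_t$ costs one Sobolev derivative of the argument, so $\partial_t^\alpha(\M_t)_b : H^s \to H^{s-|\alpha|}$ at best, and one must also check the series $\sum_{n>\nu}\op(\psi_n)\partial_t^\alpha\M_t$ converges in the appropriate operator norm. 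This is where the loss of an extra $\epsilon$ comes from and where interpolation \eqref{interpolation} is essential: one has the boundedness $\partial_t^\alpha\M_t : H^s \to H^{s-k}$ (at the endpoint $|\alpha| = k$) and the gain from the high-pass cutoff — $\|\op(\psi_n)\|_{\L(H^{s-k},H^{s-k-\epsilon})}\lesssim 2^{-n\epsilon}$ by \eqref{norme} — so $\sum_{n>\nu}\|\op(\psi_n)\partial_t^\alpha\M_t\|_{\L(H^s,H^{s-k-\epsilon})}\lesssim \sum_{n>\nu}2^{-n\epsilon} < \infty$. The genuinely delicate point, and the one I expect to be the main obstacle, is justifying that $t \mapsto (\M_t)_b$ is \emph{$\mathcal{C}^k$} and not merely $k$-times differentiable with bounded derivatives: one must show the derivative-operators $\partial_t^\alpha(\M_t)_b$ depend continuously on $t$ in $\L(H^s,H^{s-k-\epsilon})$, which requires proving that the tail $\sum_{n > \nu}\op(\psi_n)\partial_t^\alpha\M_t$ converges \emph{uniformly} in $t$ on compacta (so that the limit is continuous) — and for this the $\epsilon$-gain is again what saves us, since it makes the tail sum uniformly small. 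Assembling: one verifies the telescoping identity $\partial_t(\M_{t+he_j})_b$-difference quotient converges, using the uniform convergence of the $n$-series, and iterates up to order $k$; the constraint $s < r-1$ ensures $\M_t$ and all the frequency-localised pieces stay within the range where the composition/change-of-variables estimates hold ($F_t \in \mathcal{C}^{r+1}$ being needed because differentiating in $t$ and the Sobolev estimates together consume up to $r+1$ derivatives of $F_t$), and $s > k + 3/2$ guarantees $s - k > 3/2 > 1/2$ so that all intermediate Sobolev exponents remain positive and the multiplication-by-$f_t$ and composition-with-$F_t$ operations are well-behaved on them.
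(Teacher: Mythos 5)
Your decomposition is fundamentally different from the paper's and does not produce property (i). The paper defines $(\M_t)_b$ and $(\M_t)_c$ by a \emph{two-sided} Paley--Littlewood cut: $(\M_t)_b = \sum_{l\hra n}\op(\psi_n)\M_t\op(\psi_l)$ and $(\M_t)_c = \sum_{l\nhra n}\op(\psi_n)\M_t\op(\psi_l)$, where $l\hra n$ means $2^n\leqslant\Lambda 2^{l+6}$. The crucial point is that the relation between the \emph{input} frequency $2^l$ and the \emph{output} frequency $2^n$ is what supplies the factor $\Lambda^s$ in (i): composition with $F_t$ (derivative $\leqslant\Lambda$ on $\mathrm{supp}\,f_t$) maps a wave packet at frequency $\xi$ to one at frequency roughly $F_t'\xi\leqslant\Lambda\xi$, so the $\Lambda^s$ gain is available exactly in the regime $l\hra n$ where the output frequency is at most $\Lambda$ times the input frequency, up to a fixed dyadic margin. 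You truncate on the output side only, $(\M_t)_b=(1-\op(\chi(2^{-\nu}\cdot)))\M_t$, and then assert $\|\M_t\|_{\L(H^s,H^s)}\leqslant c\|f_t\|_\infty\Lambda^s\sup|F_t'|^{-1/2}$ for the \emph{full} operator. That bound is false for $s>\frac12$: take $F(x)=\Lambda x$ and a fixed low-frequency bump $\phi$ with $\|\phi\|_{H^s}=1$; then $\M\phi(x)=f(x)\phi(\Lambda x)\to \phi(0)f$ in $H^s$ as $\Lambda\to 0$, so $\|\M\phi\|_{H^s}$ stays bounded below while $\Lambda^s\sup|F'|^{-1/2}=\Lambda^{s-1/2}\to 0$. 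The correct global statement is the Lasota--Yorke inequality, with a lower-order remainder, and the purpose of this lemma is precisely to realize that remainder as a nuclear, $\mathcal{C}^k$-dependent operator. A high-pass cut on the output alone retains, for example, the moderate-frequency contribution produced by differentiating $f_t$ against a low-frequency $\phi$, and there is no mechanism for your $(\M_t)_b$ to discard it.

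In the paper, once the two-sided decomposition is fixed, part (i) is Lemma \ref{prebor} (a direct Cauchy--Schwarz computation in which the geometric series over $n-l\leqslant 6+\log_2\Lambda$ yields the $\Lambda^{2s}$), while the nuclearity of the off-diagonal blocks $\op(\psi_n)\M_t\op(\psi_l)$ for $l\nhra n$ is Lemma \ref{nuc}: two families of integrations by parts on the oscillatory kernel --- one exploiting the non-stationary-phase lower bound $|\xi - F'(w)\eta|\geqslant 2^{n-4}$ which holds exactly on the off-diagonal set, one producing decay off the position support --- give a summable array of nuclear norms $b_{n,l}\sim 2^{-\epsilon l}2^{-(r-1-s')n}$. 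Your suggestion to ``express $\op(\psi_n)$ via an $L^2$-orthonormal basis of trigonometric-type functions'' does not close the gap: on $\R$ the operator $\op(\psi_n)$ has infinite rank, and nuclearity genuinely uses the compact support of $f_t$ in position together with the off-diagonal frequency constraint, not a finite-rank structure. Finally, in parts (ii) and (iii) the $\mathcal{C}^k$ dependence is obtained from an explicit induction for $\partial_t\M_t$ (Lemma \ref{der}) combined with dominated convergence over $(l,n)$ controlled by the $b_{n,l}$, which is where the hypotheses $s>k+\frac32$ and $s'<r-1$ enter.
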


In this statement, $\L_{nuc} \(H^s, H^{s'} \)$ denotes the space of nuclear operators from $H^s$ to $H^{s'}$ equipped with the nuclear operator norm (see for instance \cite{Gohb} for definitions and main results).

In all this section, fix a real number $\Lambda >0$ and if $l,n$ are natural integers write
\begin{equation*}
l \hra n \textrm{ if } 2^n \leqslant \Lambda 2^{l+6}, \; l \nhra n \textrm{ otherwise }. 
\end{equation*} 
Then set
\[
\A = \set{ \(l,n\) \in \N^2 : l \hra n}
.\]

First, we investigate the "bounded" term $\(\M_t\)_b$.

\begin{lm}\label{prebor}
Let $r \in \N^*$ and $s \in \R^*_+$. There exists a constant $c > 0$ that does not depend of $\Lambda$ such that for any $\mathcal{C}^{r+1}$-diffeomorphism $F : \R \to \R$ and any compactly supported $\mathcal{C}^r$ function $f : \R \to \R$, setting for all $\phi \in H^s$
\[
\M \phi = f.\(\phi \circ F\)
,\]
then for any finite subset $E$ of $\A$ we have
\[
\left\|\sum_{ \(l,n\) \in E} \op \(\psi_n\) \M \op \(\psi_l\) \right\|_{H^s \to H^s} \leqslant c \|f\|_{\infty} \Lambda^s \sup_{\textup{ supp }f} |F'|^{-\frac{1}{2}} 
.\]
\end{lm}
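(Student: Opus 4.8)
The plan is to estimate each term $\op(\psi_n) \M \op(\psi_l)$ in the $H^s \to H^s$ operator norm and then sum over $(l,n) \in E \subseteq \A$ using the almost-orthogonality of the Paley-Littlewood pieces. First I would reduce to an $L^2 \to L^2$ estimate: by \eqref{equivalent} the $H^s$ norm is comparable to a weighted $\ell^2$ sum of the $L^2$ norms of the $\op(\psi_n)$-pieces, so it suffices to show that $\op(\psi_n) \M \op(\psi_l) : L^2 \to L^2$ has norm bounded by something like $C \|f\|_\infty \sup_{\mathrm{supp} f} |F'|^{-1/2}$ times a factor that decays in $|l-n|$ once we are off the diagonal band, and then to track how the weights $2^{-ns/2}$ versus $2^{-ls/2}$ interact with the constraint $l \hra n$. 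The constraint $l \hra n$ (i.e. $2^n \leqslant \Lambda 2^{l+6}$) is exactly what produces the factor $\Lambda^s$: on $\A$ we have $2^{n s} \leqslant (\Lambda 2^6)^s 2^{ls}$, so the weight ratio $2^{(n-l)s}$ is controlled by $\Lambda^s$ up to a constant depending only on $s$.

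The key step is the single-term $L^2 \to L^2$ bound. For this I would write $\op(\psi_n) \M \op(\psi_l)$ as an integral operator and analyze its kernel, or equivalently use the change of variables $y = F(x)$ to write, for $\phi \in \mathcal S$,
\[
\op(\psi_l)\phi(F(x)) = \frac{1}{2\pi} \int_\R e^{-i F(x) \xi} \psi_l(\xi) \widehat{\phi}(\xi) \, \mathrm d\xi,
\]
so that $\M \op(\psi_l) \phi(x) = f(x) \cdot \frac{1}{2\pi}\int_\R e^{-iF(x)\xi} \psi_l(\xi) \widehat\phi(\xi)\,\mathrm d\xi$; composing with $\op(\psi_n)$ on the left gives a kernel $K_{n,l}(x,\eta)$ in the $(x,\eta)$ "mixed" representation whose oscillatory phase is $F(x)\xi - x\eta$ with $\xi \in \mathrm{supp}\,\psi_l$. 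Non-stationary phase / repeated integration by parts in $\xi$ (using \eqref{decroit} for the derivatives of $\psi_l$ and the lower bound $|F'| \geqslant$ const on $\mathrm{supp}\,f$) shows the kernel is negligible unless $2^n \sim |F'| \cdot 2^l$, which is the almost-orthogonality; the $|F'|^{-1/2}$ and the two factors $2^{-n/2}$, $2^{-l/2}$ that one naively expects from the two Fourier supports combine, via the Jacobian $|F'|$ from the change of variables, to give precisely $\sup_{\mathrm{supp} f}|F'|^{-1/2}$. A clean way to organize this is a Schur test on the kernel in the $(l,n)$ indices after the single-term bound is in hand. This kernel analysis — getting the right power of $|F'|$ and genuine decay in $|n - l - \log_2|F'||$ — is the main obstacle; everything else is summation bookkeeping.

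Once the single-term estimate
\[
\left\| \op(\psi_n) \M \op(\psi_l) \right\|_{L^2 \to L^2} \leqslant C \|f\|_\infty \sup_{\mathrm{supp} f} |F'|^{-\frac12} \, \beta_{n,l},
\]
is established with $\sum_n \beta_{n,l} \leqslant C$ and $\sum_l \beta_{n,l} \leqslant C$ uniformly (the $\beta_{n,l}$ encoding the off-diagonal decay), I would assemble the full bound: for $\phi \in H^s$, expand $\|\sum_{(l,n)\in E}\op(\psi_n)\M\op(\psi_l)\phi\|_{H^s}^2$ using \eqref{equivalent}, note that for fixed $n$ at most a bounded number of $l$ contribute significantly (and all satisfy $l \hra n$, so $2^{ns} \leqslant C\Lambda^s 2^{ls}$), apply Cauchy-Schwarz in $l$ weighted by $\beta_{n,l}$, and recombine using \eqref{equivalent} in the other direction. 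The constraint $l \hra n$ is used precisely here to convert the weight $2^{-ns}$ on the output into $\Lambda^s 2^{-ls}$ on the input, yielding the claimed $\Lambda^s$; and one checks that the resulting constant $c$ depends only on $r$ (through the choice of how many integrations by parts, governed by $s < r$ implicitly via the regularity available) and on $s$, but not on $\Lambda$, $f$, or $F$.
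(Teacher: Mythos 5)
Your summability observation --- that the constraint $l\hookrightarrow n$ converts the weight ratio $2^{s(n-l)}$ into something $\leqslant(\Lambda 2^{6})^{s}$, producing the $\Lambda^{s}$ --- is exactly the mechanism that makes the lemma work, and the reduction via \eqref{equivalent} is right. But the oscillatory-integral step at the heart of your plan is both unnecessary and, more importantly, would fail to give the stated conclusion.

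Concretely: (i) You propose to obtain off-diagonal decay $\beta_{n,l}$ for $\left\|\op(\psi_n)\M\op(\psi_l)\right\|_{L^{2}\to L^{2}}$ by non-stationary phase. But on the set $\A$ (where $l\hookrightarrow n$), the phase $F(x)\xi - x\eta$ with $\xi\in\mathrm{supp}\,\psi_{l}$, $\eta\in\mathrm{supp}\,\psi_{n}$ can have critical points $F'(x)\xi=\eta$ --- that is exactly the regime Lemma \ref{nuc} excludes by restricting to $l\not\hookrightarrow n$, where the gradient is $\gtrsim 2^{n}$ and integration by parts pays off. Within $\A$ there is no uniform non-stationarity, so this step does not produce decay. (ii) Even where integration by parts can be carried out, each pass introduces derivatives of $f$ and $F$, so the resulting constant depends on $\|f\|_{\mathcal{C}^{r}}$ and $\|F\|_{\mathcal{C}^{r+1}}$ --- whereas the lemma asserts a bound involving only $\|f\|_{\infty}$ and $\sup_{\mathrm{supp}f}|F'|^{-1/2}$, with $c$ independent of $f$ and $F$. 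Your constant would not have this form. (iii) ``For fixed $n$ at most a bounded number of $l$ contribute significantly'' is false: for each $n$, the set $\{l:(l,n)\in\A\}$ contains all sufficiently large $l$. The summation over $l$ is not controlled by smallness of the blocks but purely by the Sobolev weight.

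The paper's proof avoids all of this. It never estimates a single block $\op(\psi_n)\M\op(\psi_l)$ and uses no oscillatory-integral or kernel analysis at all. The only input about $\M$ is the trivial change-of-variables bound $\|\M\psi\|_{L^{2}}\leqslant\|f\|_{\infty}\sup_{\mathrm{supp}f}|F'|^{-1/2}\|\psi\|_{L^{2}}$. One then applies Cauchy--Schwarz twice, inserting and removing the geometric weight $2^{s(n-l)/2}$; the key quantity is the one you identified,
\[
\sum_{l:\,(l,n)\in\A}2^{s(n-l)}\leqslant\frac{2^{6s}}{1-2^{-s}}\,\Lambda^{s},
\]
which is finite because $s>0$ and the constraint $l\hookrightarrow n$ bounds $n-l$ from above. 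No derivative of $f$ or $F$ appears, so the constant depends only on $s$ (the role of $r$ is only to make $\M$ well-defined). You should drop the Schur-test and non-stationary-phase machinery and run the argument directly with the trivial $L^{2}\to L^{2}$ bound for $\M$; the geometric weight already supplies the summability you were trying to manufacture.
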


\begin{proof}
Let $\phi \in \mathcal{S}$. For all $l \in \N$ set $\phi_l = \op\(\psi_l\)\phi$. Then 
\begin{align*}
\left\| \sum_{ \(l,n\) \in E} \op \(\psi_n\) \M \phi_l \right\|_{H^s}^2 & = \s{\R}{\(1+\xi^2\)^s \left| \sum_{n \geqslant 0} \psi_n\(\xi\) \(\sum_{l : \(l,n\) \in E} \widehat{\M \phi_l}\(\xi\) \)\right|^2}{\xi}
.\end{align*}
Since for all $\xi \in \R$ there are at most three values of $n$ for which $\psi_n\(\xi\) \neq 0$, Cauchy-Schwartz implies that
\begin{align*}
\left\| \sum_{ \(l,n\) \in E} \op \(\psi_n\) \M \phi_l \right\|_{H^s}^2 & \leqslant 3 \sum_{n \geqslant 0} \s{\R}{ \(1+ \xi^2\)^s \left|\psi_n\(\xi\) \(\sum_{l : \(l,n\) \in E} \widehat{\M \phi_l}\(\xi\) \)\right|^2}{\xi}  \\
   & \leqslant 3 \times 16^s  \sum_{n \geqslant 0} 4^{sn} \s{\R}{ \left|\psi_n\(\xi\) \(\sum_{l : \(l,n\) \in E} \widehat{\M \phi_l}\(\xi\) \)\right|^2}{\xi} \\
   & \leqslant 3 \times 16^s \s{\R}{\sum_{n \geqslant 0} \left| \sum_{l : \(l,n\) \in E} 2^{sn-sl} 2^{sl} \psi_n\(\xi\) \widehat{\M \phi_l}\(\xi\) \right|^2 }{\xi} 
.\end{align*}
Applying Cauchy-Schwartz again we get
\begin{align*}
  \left\| \sum_{ \(l,n\) \in E} \op \(\psi_n\) \M \phi_l \right\|_{H^s}^2  & \leqslant 3 \times 16^s \s{\R}{\sum_{n \geqslant 0} \(\sum_{l : \(l,n\) \in E} 2^{sn-sl} \) \( \sum_{l : \(l,n\) \in E} 2^{sn-sl} 4^{sl} \left| \psi_n\(\xi\) \widehat{\M \phi_l}\(\xi\)\right|^2 \) }{\xi} \\
  & \leqslant 3 \times 16^s \Lambda^s \frac{2^{6s}}{1-2^{-s}} \s{\R}{ \sum_{ \(l,n\) \in E} 2^{sn-sl} 4^{sl} \left| \psi_n\(\xi\) \widehat{\M \phi_l}\(\xi\)\right|^2  }{\xi} \\
   & \leqslant 3 \times 16^s \Lambda^s \frac{2^{6s}}{1-2^{-s}} \s{\R}{ \sum_{l \geqslant 0} 4^{sl} \(\sum_{ n : \(l,n\) \in E} 2^{\frac{s\(n-l\)}{2}} \left| \psi_n\(\xi\) \widehat{\M \phi_l}\(\xi\)\right|\)^2  }{\xi} \\
   & \leqslant 3 \times 16^s \Lambda^{2s} \frac{2^{12s}}{1-2^{-s}} \sum_{l \geqslant 0} 4^{sl} \s{\R}{ \left|\widehat{\M \phi_l}\(\xi\)\right|^2}{\xi} \\
   & \leqslant 3 \times 16^s \Lambda^{2s} \frac{2^{12s}}{1-2^{-s}} \sum_{l \geqslant 0} 4^{sl} \left\|\M \phi_l \right\|_{L^2}^2 \\
   & \leqslant 3 \times 16^s \Lambda^{2s} \frac{2^{12s}}{1-2^{-s}} \left\|f\right\|_{\infty}^2 \sup_{\textrm{ supp } f} \left|F'\right|^{-1} \sum_{l \geqslant 0} 4^{sl} \left\|\phi_l \right\|^2_{L^2} \\
   & \leqslant c^2 \Lambda^{2s} \left\| f \right\|_{\infty}^2 \sup_{\textrm{ supp } f } \left|F'\right|^{-1} \left\|\phi \right\|_{H^s}
.\end{align*}
We used \eqref{equivalent} in the last line.
\end{proof}

\begin{lm}\label{bor}
Under the hypotheses of Lemma \ref{prebor}, the series
\[
\sum_{l \hra n} \op\(\psi_n\) M \op\(\psi_l\)
\]
converges in $\L\(H^s,H^s\)$ equipped with the weak operator topology to an operator with operator norm bounded by 
\[
 c \|f\|_{\infty} \Lambda^s \sup_{\textup{supp }f} |F'|^{-\frac{1}{2}} 
.\]
\end{lm}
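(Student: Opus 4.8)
The plan is to upgrade the estimate of Lemma \ref{prebor} to a \emph{localized} one — already implicit in its proof — and then to read off convergence from it. Concretely, rereading the chain of inequalities proving Lemma \ref{prebor} while keeping track of the (finitely many) integers $l$ that actually occur as a first coordinate of an element of the finite set $E$, one gets a bound of the shape
\[
\Bigl\|\sum_{(l,n)\in E}\op(\psi_n)\M\op(\psi_l)\phi\Bigr\|_{H^s}^2\ \leqslant\ C\sum_{\substack{l\geqslant 0\\ \exists\,n:\,(l,n)\in E}}4^{sl}\left\|\op(\psi_l)\phi\right\|_{L^2}^2\qquad(\phi\in\mathcal{S}),
\]
$C$ depending only on $s$, $\Lambda$, $\|f\|_\infty$ and $\sup_{\mathrm{supp}\,f}|F'|$ — the only change in the displayed computation being that the final sum over $l$ is restricted to the first coordinates of $E$. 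By density of $\mathcal{S}$ in $H^s$ this holds for every $\phi\in H^s$. The point of this refinement over the bare statement of Lemma \ref{prebor} is that tails of the series are now genuinely small, not merely uniformly bounded.

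Next I would realize the series as the net of partial sums $P_E=\sum_{(l,n)\in E}\op(\psi_n)\M\op(\psi_l)$ over the finite subsets $E$ of $\A$, directed by inclusion. The structural fact making this work is that for each fixed $l$ the relation $l\hra n$ (i.e.\ $2^n\leqslant\Lambda 2^{l+6}$) leaves only finitely many $n$, so the sets $\A_L=\set{(l,n)\in\A:l\leqslant L}$ are finite and increase to $\A$. Fix $\phi\in H^s$. By \eqref{equivalent} — as already used at the end of the proof of Lemma \ref{prebor} — the series $\sum_{l\geqslant 0}4^{sl}\|\op(\psi_l)\phi\|_{L^2}^2$ converges, so given $\epsilon>0$ one picks $L$ with $\sum_{l>L}4^{sl}\|\op(\psi_l)\phi\|_{L^2}^2<\epsilon$. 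If $\A_L\subseteq E\subseteq E'$ are finite, every $(l,n)\in E'\setminus E$ has $l>L$ (else $(l,n)\in\A_L\subseteq E$), and the localized estimate applied to $E'\setminus E$ gives $\|P_{E'}\phi-P_E\phi\|_{H^s}^2\leqslant C\epsilon$; thus $(P_E\phi)_E$ is a Cauchy net in $H^s$, and I set $P\phi$ equal to its limit.

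Finally $P:H^s\to H^s$ is plainly linear, and letting $E\to\A$ in Lemma \ref{prebor} gives $\|P\phi\|_{H^s}\leqslant c\|f\|_\infty\Lambda^s\sup_{\mathrm{supp}\,f}|F'|^{-\frac12}\|\phi\|_{H^s}$, so $P\in\L(H^s,H^s)$ with the asserted norm bound. Since $P_E\phi\to P\phi$ in $H^s$ for each $\phi$, the net $(P_E)$ converges to $P$ in the strong, hence a fortiori in the weak, operator topology, which is the claim; the same Cauchy estimate shows the sum does not depend on the order in which $\A$ is enumerated. I do not anticipate a genuine obstacle: once the two ingredients above are in hand the argument is entirely soft, and the only point needing a little care is to verify that the computation in the proof of Lemma \ref{prebor} really localizes in $l$ — which it does, since each sum over $l$ there is either explicitly restricted to $l$ with $(l,n)\in E$ or has vanishing terms for $l$ not occurring in $E$.
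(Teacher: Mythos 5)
Your proof is correct, but it takes a genuinely different (and in one respect stronger) route than the paper's, which is a two-line soft argument: the net of partial sums is bounded in $\L(H^s,H^s)$ by Lemma~\ref{prebor}, hence lies in a WOT-compact set, and it has a unique accumulation point because the series converges in operator norm on $\L(H^1,H^{-1})$ (each term has $\L(H^1,H^{-1})$-norm $O(2^{-n-l})$ by \eqref{norme}, and these are summable over $\A$); a bounded net with a unique accumulation point in a compact metrizable space converges. The paper's argument is shorter but relies on abstract WOT-compactness and uses a weaker auxiliary space to pin down the limit. Your argument instead extracts the $l$-localized estimate already hiding in the proof of Lemma~\ref{prebor} (the final sum over $l$ does only run over first coordinates of $E$; nothing in the constant depends on $E$), and combines it with the key structural observation that for each $l$ the set $\{n:l\hra n\}$ is finite, so the sets $\A_L=\{(l,n)\in\A:l\leqslant L\}$ exhaust $\A$ cofinally. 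From there you get a Cauchy net in $H^s$ for each fixed $\phi$, and hence \emph{strong} operator convergence — strictly stronger than the stated weak operator convergence — with the norm bound inherited from Lemma~\ref{prebor}. Both proofs are complete; yours is more explicit and yields a stronger mode of convergence, while the paper's is more economical and reuses the boundedness estimate without reopening its proof.
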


\begin{proof}
The net of the partial sums of this series is bounded according to Lemma \ref{prebor} and has a unique accumulation point since it converges for the operator norm topology on $\L\(H^1,H^{-1}\)$ for instance.
\end{proof}

Now, we study the "nuclear" term $\(\M_t\)_c$ of the decomposition given by lemma \ref{decloc}. One of its most important property is being regularizing.

\begin{lm}\label{nuc}
Let $r \in \N^*$ and $m \in \N$. Let $s,s'>0$ with $s > m + \frac{3}{2}$ and $s' < r - 1$. Let $f : \R \to \R$ be a $\mathcal{C}^r$ function compactly supported in a bounded open interval $\left]a,b\right[$ and let $F : \R \to \R$ be a $\mathcal{C}^{r+1}$-diffeomorphism with derivative bounded by $\Lambda$ on $\textrm{ supp } f$. Then there is a summable sequence $\(b_{n,l}\)_{l \nhra n}$ that only depends on $a,b,s,s',m, \Lambda$, the $\mathcal{C}^r$ norm of $f$, and the $\mathcal{C}^{r+1}$ norm of $F$ on $ \left[a,b\right]$, such that, setting for all $\phi \in \mathcal{S}$
\[
\M \phi = f \( \phi^{\(m\)} \circ F\)
,\]
for all $l \nhra n$ the operator $\op \(\psi_n\) \M \op\(\psi_l\)$ extends to a nuclear operator from $H^s$ to $H^{s'}$ with nuclear operator norm bounded by $b_{n,l}$.
\end{lm}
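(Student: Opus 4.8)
The plan is to realise $\op(\psi_n)\M\op(\psi_l)$ as an integral operator, to estimate its nuclear norm by a Hilbert--Schmidt factorisation of its kernel, and to use the separation $l\nhra n$ through an integration by parts in order to turn a bound that a priori grows in $2^n$ into one that is summable over $\{(l,n):l\nhra n\}$.

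First I would compute the Schwartz kernel of $\op(\psi_n)\M\op(\psi_l)$. For $\phi\in\mathcal{S}$, unwinding the definitions of $\op(\psi_n)$, $\M$ and $\op(\psi_l)$ and carrying out the frequency integrals gives
\[
\op(\psi_n)\M\op(\psi_l)\phi(x)=\int_{\R}K_{n,l}(x,y)\phi(y)\,\mathrm{d}y,\qquad
K_{n,l}(x,y)=c\int_{\R}g_n(z-x)\,f(z)\,h_{l,m}\big(y-F(z)\big)\,\mathrm{d}z,
\]
where $c$ is a harmless constant, $g_n$ is the inverse Fourier transform of $\psi_n$ and $h_{l,m}$ that of $\xi\mapsto(-i\xi)^m\psi_l(\xi)$; these are real Schwartz functions, $g_n$ concentrated at scale $2^{-n}$ and $h_{l,m}$ at scale $2^{-l}$, and the $z$-integral runs effectively over $\operatorname{supp}f\subseteq\left]a,b\right[$. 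Since $(1-\partial^2)^{-s/2}\colon L^2\to H^s$ and $(1-\partial^2)^{s'/2}\colon H^{s'}\to L^2$ are isometries, $\|\op(\psi_n)\M\op(\psi_l)\|_{\L_{nuc}(H^s,H^{s'})}$ equals the trace norm of the conjugated operator, whose kernel (the Bessel potentials acting as convolutions in $x$, resp.\ $y$) is $\widetilde K_{n,l}(x,y)=c\int_{\R}G_n^{s'}(z-x)f(z)h_{l,m}^{(-s)}(y-F(z))\,\mathrm{d}z$, with $G_n^{s'}$ the inverse Fourier transform of $\langle\xi\rangle^{s'}\psi_n(\xi)$ and $h_{l,m}^{(-s)}$ that of $(-i\xi)^m\langle\xi\rangle^{-s}\psi_l(\xi)$. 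A kernel of the form $\int\overline{B(z,x)}\,C(z,y)\,\mathrm{d}z$ is the kernel of $B^{*}C$, a product of two Hilbert--Schmidt operators whose Hilbert--Schmidt norms are $\|B\|_{L^2}$ and $\|C\|_{L^2}$; splitting $f=\operatorname{sgn}(f)|f|^{1/2}\cdot|f|^{1/2}$ this already shows that $\op(\psi_n)\M\op(\psi_l)$ extends to a nuclear operator $H^s\to H^{s'}$, with nuclear norm at most $c\|f\|_{L^1}\|G_n^{s'}\|_{L^2}\|h_{l,m}^{(-s)}\|_{L^2}\lesssim 2^{n(s'+1/2)}2^{l(m-s+1/2)}$, using that $\psi_n$ is supported in $\{2^{n-1}\leqslant|\xi|\leqslant 2^{n+1}\}$.

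That bound grows in $2^n$, which is where the hypothesis $l\nhra n$, i.e.\ $2^n>\Lambda 2^{l+6}$, must be used. For $n\geqslant 1$ the function $\psi_n$ vanishes near $0$, so $G_n^{s'}=\partial^{r}\widetilde G_n$ with $\widetilde G_n$ the inverse Fourier transform of $(i\xi)^{-r}\langle\xi\rangle^{s'}\psi_n(\xi)$ and $\|\widetilde G_n\|_{L^2}\lesssim 2^{n(s'-r+1/2)}$. Integrating by parts $r$ times in the variable $z$ in $\widetilde K_{n,l}$ — legitimate because $f$ is compactly supported and the integrand is $\mathcal{C}^r$ in $z$ (this is where $f\in\mathcal{C}^r$ and $F\in\mathcal{C}^{r+1}$ enter, together with $|F'|\leqslant\Lambda$ on $\operatorname{supp}f$ coming in through the Faà di Bruno formula) — turns $\widetilde K_{n,l}$ into a finite sum of kernels of the same factorised shape, with $\widetilde G_n$ in place of $G_n^{s'}$, a bounded function supported in $\left[a,b\right]$ (with norm controlled by the data) in place of $f$, and a derivative $(h_{l,m}^{(-s)})^{(i)}$, $i\leqslant r$, in place of $h_{l,m}^{(-s)}$, for which $\|(h_{l,m}^{(-s)})^{(i)}\|_{L^2}\lesssim 2^{l(r+m-s+1/2)}$. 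The Hilbert--Schmidt factorisation then gives, for $n\geqslant 1$, a bound $b_{n,l}\lesssim 2^{n(s'-r+1/2)}2^{l(r+m-s+1/2)}$, the constant depending only on $a,b,s,s',m,\Lambda$, $\|f\|_{\mathcal{C}^r}$ and $\|F\|_{\mathcal{C}^{r+1}}$ on $\left[a,b\right]$ and on the fixed cut-off $\chi$; the finitely many pairs with $l\nhra n$ and $n=0$ (which can occur only for small $\Lambda$) are dealt with by the crude bound of the previous paragraph.

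Finally one checks summability: since $s'-r+1/2<0$ (because $s'<r-1$), for fixed $l$ the geometric sum $\sum_{n\,:\,2^n>\Lambda2^{l+6}}2^{n(s'-r+1/2)}$ is controlled by a constant (depending on $\Lambda,r,s'$) times $2^{-l(r-1/2-s')}$, so $\sum_{l\nhra n}b_{n,l}$ is dominated by a geometric series in $l$ whose convergence is ensured by $s>m+\tfrac32$ together with $s'<r-1$ — possibly after distributing the $r$ available derivatives more carefully, integrating by parts fewer times against $g_n$ and the remaining times against the elements of an $L^2(\left]a,b\right[)$-basis in a discrete expansion of $\widetilde K_{n,l}$ in the $z$-variable, so as to cover values of $s'$ close to $r-1$. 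This last step is the one I expect to require the most care: the naive mapping-norm estimates are not summable over the gap region, and one genuinely has to exploit the quantitative separation $2^n>\Lambda2^{l+6}$ through the integration by parts to convert growth in $2^n$ into decay fast enough to beat the loss $2^{l(r+m-s+1/2)}$ created by the $2^l$-scale of the input after composition with $F$.
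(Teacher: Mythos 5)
Your realisation of $\op(\psi_n)\M\op(\psi_l)$ as an integral operator, the conjugation by Bessel potentials, and the Hilbert--Schmidt factorisation of the kernel are sound and do prove nuclearity; this part is a legitimate variant of the paper's decomposition into a continuous superposition of rank-one operators $(\delta_y\circ\op(a))\otimes V_{n,l}(\cdot,y)$. The gap is in the integration by parts. After writing $G_n^{s'}=\partial^r\widetilde G_n$ and moving $\partial_z^r$ onto $f(z)\,h^{(-s)}_{l,m}(y-F(z))$, each derivative that lands on the second factor produces $F'(z)\,(h^{(-s)}_{l,m})'(y-F(z))$, hence a factor $2^{l}$ in $L^2$ norm. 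The only gain per integration by parts is the $2^{-n}$ from $\widetilde G_n$, and on the region $l\nhra n$ one merely knows $2^{l}2^{-n}<(\Lambda 2^{6})^{-1}$, so the net gain per step is a \emph{constant}, not decay: your final bound $2^{n(s'-r+1/2)}2^{l(r+m-s+1/2)}$ is the crude bound $2^{n(s'+1/2)}2^{l(m-s+1/2)}$ times $(2^{l-n})^{r}\leqslant(\Lambda2^{6})^{-r}$. Writing $n=l+j$ with $j$ bounded below, summability over $\set{(l,n):l\nhra n}$ requires $s'+m+1-s<0$, i.e.\ $s>s'+m+1$, which the hypotheses $s>m+\tfrac32$, $s'<r-1$ do not give (take $m=0$, $s=2$, $s'=r-2$ with $r$ large); and Lemma \ref{decloc} needs exactly that regime. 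Your proposed repair (integrating by parts fewer times against $g_n$, plus a discrete expansion in $z$) does not address this: the Leibniz term where all derivatives fall on $h^{(-s)}_{l,m}(y-F(z))$ is unavoidable in your formulation and is the dominant one.

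The missing idea is that the integration by parts must be performed against the \emph{full} phase, not against $\partial_z$ alone. Keeping both frequency variables, the kernel is an oscillatory integral with phase $(x-w)\xi+(F(w)-y)\eta$, whose $w$-derivative is $-\xi+F'(w)\eta$; the condition $l\nhra n$ together with $|F'|\leqslant\Lambda$ on $\operatorname{supp}f$ gives $\left|\xi-F'(w)\eta\right|\geqslant 2^{n-4}$ for $\xi\in\operatorname{supp}\tilde{\psi}_n$ and $\eta\in\operatorname{supp}\psi_l$. Integrating by parts with $(F'(w)\eta-\xi)^{-1}\partial_w$ then gains a clean $2^{-n}$ at each step — the new amplitude terms such as $fF''\eta/(F'\eta-\xi)^2$ are $O(2^{l-2n})=O(2^{-n})$ — with no compensating $2^{l}$ loss, and this is what produces the summable bound $2^{-\epsilon l}2^{-(r-1-s')n}$. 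A second, minor difference: to get $l$-decay under the sharp hypothesis $s>m+\tfrac32$, the paper spends only $1+\epsilon$ of the $s$ available derivatives on the $\eta$ side (making $\delta_y\circ\op(a)$ uniformly bounded on $(H^s)'$ and harvesting $2^{-\epsilon l}$ from $\int\psi_l(\eta)(1+\eta^2)^{-(1+\epsilon)/2}\,\mathrm{d}\eta$); your choice of spending all $s$ of them is harmless here, but the first point is fatal as written.
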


\begin{proof}
Since $\phi \mapsto \phi^{\(m\)}$ is bounded from $H^s$ to $H^{s-m}$ and commutes with $\op\(\psi_l\)$, one only has to deal with the case $m=0$. 

In the following, all the $C_i$ are constants depending only on $a,b,s,s',m, \Lambda$, the $\mathcal{C}^r$ norm of $f$ on $\left[a,b\right]$ and the $\mathcal{C}^{r+1}$ norm of $F$ on $F^{-1} \left[a,b\right]$.

Fix $\epsilon > 0$ such that $s > \frac{3}{2} + \epsilon$ and set $a : \eta \in \R \to \(1+\eta^2\)^{\frac{1+\epsilon}{2}}$.

Fix $l \nhra n$ with $l \neq 0$. If $\phi \in \mathcal{S}$ write for all $x \in \R$ 
\begin{equation}\label{noy}
\op\(\tilde{\psi_n}\) \M \op\(\psi_l \) \phi \(x\) = \frac{1}{\(2\pi\)^2} \s{\R}{V_{n,l}\(x,y\) \op\(a\)\phi\(y\)}{y} 
\end{equation}
where $V_{n,l}$ is defined by 
\[
\forall x,y \in \R : V_{n,l}\(x,y\) = \int_{\R^3} e^{i\(x-w\)\xi + i\(F\(w\) - y\) \eta } f\(w\) \tilde{\psi_n}\(\xi\) \frac{\psi_l\(\eta\)}{\(1+\eta^2\)^{\frac{1+\epsilon}{2}}} \mathrm{d} w \mathrm{d} \xi \mathrm{d} \eta
.\]
If $x$ doesn't lie in $\left[a,b\right]$, integrating by parts $r$ times, we get
\begin{equation}\label{eq1}
V_{n,l}\(x,y\) = i^{r} \int_{\R^3} \frac{e^{i\(x-w\)\xi + i\(F\(w\) - y\) \eta }}{\(x-w\)^{r} } f\(w\) \tilde{\psi_n}^{\(r\)}\(\xi\) \frac{\psi_l\(\eta\)}{\(1+\eta^2\)^{\frac{1+\epsilon}{2}}} \mathrm{d} w \mathrm{d} \xi \mathrm{d} \eta
\end{equation}
and thus, setting $K = \left[a,b\right]$ and recalling \eqref{decroit},
\begin{equation}\label{decr}
\left|V_{n,l}\(x,y\)\right| \leqslant  \frac{C_1}{d\(x,K\)^{r}} 2^{-\epsilon l} 2^{-\(r-1\)n} 
.\end{equation}
We notice
\begin{equation*}
\forall w \in \textrm{ supp } f : \forall \xi, \eta \in \R : \tilde{\psi_n}\(\xi\) \psi_l \(\eta\) \neq 0 \Rightarrow \left| \xi - F'\(w\) \eta \right| \geqslant 2^{n-4} 
.\end{equation*}
Then we write
\begin{align*}
V_{n,l}\(x,y\)  & = \int_{\R^3} i\(F'\(w\)\eta - \xi\)  e^{i\(x-w\)\xi + i\(F\(w\) - y\) \eta } \frac{f\(w\)}{i \(F'\(w\) \eta - \xi\) } \tilde{\psi_n}\(\xi\) \frac{\psi_l\(\eta\)}{\(1+\eta^2\)^{\frac{1+\epsilon}{2}}} \mathrm{d} w \mathrm{d} \xi \mathrm{d} \eta \\
    & = i  \int_{\R^3}  e^{i\(x-w\)\xi + i\(F\(w\) - y\) \eta } \drond{}{w} \( \frac{f\(w\)}{ \(F'\(w\) \eta - \xi\) }\) \tilde{\psi_n}\(\xi\) \frac{\psi_l\(\eta\)}{\(1+\eta^2\)^{\frac{1+\epsilon}{2}}} \mathrm{d} w \mathrm{d} \xi \mathrm{d} \eta
.\end{align*}
Using this trick $r$ times we write
\begin{equation}\label{eq2}
V_{n,l}\(x,y\) = \int_{\R^3} e^{i\(x-w\)\xi + i\(F\(w\) - y\) \eta } \tilde{\psi_n}\(\xi\) \frac{\psi_l\(\eta\)}{\(1+\eta^2\)^{\frac{1+\epsilon}{2}}} \Phi\(w,\xi,\eta\) \mathrm{d} w \mathrm{d} \xi \mathrm{d} \eta
,\end{equation}
with $\Phi$ bounded by $C_2 2^{-rn}$ when $\xi \in \textrm{supp} \(\tilde{\psi_n}\)$ and $\eta \in \textrm{supp}\(\tilde{\psi_l}\)$, and thus,
\begin{equation}\label{noz}
\left|V_{n,l}\(x,y\)\right| \leqslant C_3 2^{-\epsilon l} 2^{-\(r-1\)n} 
.\end{equation}

From \eqref{decr} and \eqref{noz}, we find for all $y \in \R$ :
\begin{equation}\label{osci}
\left\|V_{n,l}\( \cdot, y\)\right\|_{L^2} \leqslant C_4 2^{-\epsilon l} 2^{-\(r-1\)n}.
\end{equation}

If $y$ doesn't lie in $F\( \left[a,b\right]\)$, we may improve \eqref{osci} by integrating by parts twice on $\eta$ in \eqref{eq1} and \eqref{eq2}. This way we get
\begin{equation}\label{oscil}
\left\|V_{n,l}\( \cdot, y\)\right\|_{L^2} \leqslant C_4 2^{-\epsilon l} 2^{-\(r-1\)n} f\(y\)
\end{equation}
where $f$ is a positive integrable function that only depends on $a,b$, the $\mathcal{C}^r$ norm of $f$ on $\left[a,b\right]$, and the $\mathcal{C}^{r+1}$ norm of $F$ on $ \left[a,b\right]$.

Using the same kind of arguments, one can easily show that the function $y \in \R \mapsto V_{n,l}\(\cdot,y\) \in L^2$ is (Lipschitz-)continuous. Furthermore the function $y \in \R \mapsto \delta_y \circ \op\(a\) \in \(H^s\)'$ is (Hölder-)continuous. Thus the integral
\begin{equation*}
\frac{1}{\(2\pi\)^2}\s{\R}{\(\delta_y \circ \op\(a\)\) \otimes V_{n,l}\(.,y\)}{y} \in \L_{nuc}\( H^s,L^2\)
\end{equation*}
is well-defined and extends $\op\(\tilde{\psi_n}\) \M \op\(\psi_l\)$ according to \eqref{noy}. Moreover, its nuclear operator norm is bounded by
\[
\frac{1}{\(2 \pi\)^2}\s{\R}{\left\|\delta_y \circ \op\(a\) \right\|_{\(H^s\)'} \left\| V_{n,l}\(.,y\)\right\|_{L^2}}{y}  \leqslant C_5 2^{- \epsilon l} 2^{-\(r-1\)n}
.\]
Thus, the nuclear operator from $H^s$ to $H^{s'}$ defined by
\[
\frac{1}{\(2 \pi\)^2} \op \(\psi_n\) \circ \s{\R}{\(\delta_y \circ \op\(a\)\) \otimes V_{n,l}\(.,y\)}{y} 
\] 
extends the operator $\op \(\psi_n\) \M \op \(\psi_l\) = \op \(\psi_n\) \op \(\tilde{\psi_n}\) \M \op \( \psi_l\)$ and has nuclear operator norm bounded by $\left\|\op \(\psi_n\)\right\|_{L_2 \to H^{s'}} C_5 2^{- \epsilon l} 2^{-\(r-1\)n} \leqslant C_6 2^{- \epsilon l} 2^{-\(r-1-s'\)n} = b_{n,l}$ (recall \eqref{norme}).

\end{proof}

\begin{rmq}\label{weaken}
The constant $\frac{3}{2}$ that appears in Lemma \ref{nuc} must be understood as $1 + \frac{1}{2}$ and may be replaced by $1 + \frac{1}{p}$ by working with spaces $H_p^t \(\R\)$ as in \cite{Bal2} (here, $1$ is thought as the dimension and thus may be replaced by some integer $D$ to get a more general result). This is one way of weakening the hypotheses of Theorem \ref{main}. 
\end{rmq}

We immediately deduce the following result from Lemmas \ref{bor} and \ref{nuc}.

\begin{cor}\label{borne}
Let $r \in \N^*$ and $m \in \N$. Let $s \in \R$ with $m+ \frac{3}{2} < s < m + r - 1$. Let $f : \R \to \R$ be a $\mathcal{C}^r$ function compactly supported in a bounded open interval $\left]a,b\right[$ and $F : \R \to \R$ be a $\mathcal{C}^{r+1}$-diffeomorphism. Then setting for all $\phi \in \mathcal{S}$
\[
\M \phi = f \( \phi^{\(m\)} \circ F\)
,\]
the operator $\M$ is bounded from $H^s $ to $H^{s-m}$. Moreover its operator norm between these spaces is bounded by a constant that only depends of $a,b,s,m, \lambda$, the $\mathcal{C}^r$ norm of $f$, and the $\mathcal{C}^{r+1}$ norm of $F$ on $\left[a,b\right]$.
\end{cor}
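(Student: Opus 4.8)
The plan is to assemble Corollary~\ref{borne} from the ``bounded'' and ``nuclear'' pieces already produced in Lemmas~\ref{bor} and~\ref{nuc}, after a reduction to the case where no derivative appears. First I would reduce to that case: writing $D^m\colon\phi\mapsto\phi^{\(m\)}$, a Fourier multiplier of order $m$ and hence bounded from $H^s$ to $H^{s-m}$ with norm at most $1$, one has $\M=\M_0\circ D^m$ with $\M_0\phi=f\cdot\(\phi\circ F\)$. So it suffices to bound $\M_0$ from $H^\sigma$ to $H^\sigma$, where $\sigma:=s-m$, the hypothesis $m+\frac{3}{2}<s<m+r-1$ being exactly $\frac{3}{2}<\sigma<r-1$. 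Put $\Lambda:=\sup_{\textrm{supp }f}\left|F'\right|$; it is finite and controlled by $\left[a,b\right]$ and the $\mathcal{C}^{r+1}$ norm of $F$ there.

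Next I would assemble the decomposition by splitting $\N^2=\A\sqcup\A^c$, where $\A=\set{\(l,n\):l\hra n}$. On $\A$, Lemma~\ref{bor} (with $s=\sigma>0$) gives that $\sum_{l\hra n}\op\(\psi_n\)\M_0\op\(\psi_l\)$ converges in the weak operator topology to some $\(\M_0\)_b\in\L\(H^\sigma,H^\sigma\)$ of operator norm at most $c\|f\|_\infty\Lambda^\sigma\sup_{\textrm{supp }f}\left|F'\right|^{-\frac{1}{2}}$. On $\A^c$, Lemma~\ref{nuc} applied with derivative order $0$ and $s=s'=\sigma$ (admissible since $\frac{3}{2}<\sigma<r-1$) provides a summable sequence $\(b_{n,l}\)_{l\nhra n}$, depending only on the data allowed in the statement, such that each $\op\(\psi_n\)\M_0\op\(\psi_l\)$ extends to a nuclear operator $H^\sigma\to H^\sigma$ of nuclear norm at most $b_{n,l}$; since the nuclear norm dominates the operator norm, $\sum_{l\nhra n}\op\(\psi_n\)\M_0\op\(\psi_l\)$ converges absolutely in $\L\(H^\sigma,H^\sigma\)$ to some $\(\M_0\)_c$ of operator norm at most $\sum_{l\nhra n}b_{n,l}$. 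Summing the two bounds yields a constant $C$ of the required form.

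It then remains to identify $\M_0$ with $\(\M_0\)_b+\(\M_0\)_c$ on $\mathcal{S}$, which is dense in $H^\sigma$, so that $\M_0$ extends to this bounded operator of norm $\leqslant C$ and the corollary follows (composing back with $D^m$). For $\phi\in\mathcal{S}$ one has $\M_0\phi\in\mathcal{C}^r_c\subseteq H^\sigma$, and the scalar Paley--Littlewood partial sums $\sum_{l=0}^L\op\(\psi_l\)\phi=\op\(\chi\(2^{-L}\,\cdot\,\)\)\phi$ converge to $\phi$ in $\mathcal{C}^r$ on every compact set, hence $\sum_{l=0}^L\M_0\op\(\psi_l\)\phi\to\M_0\phi$ in $\mathcal{C}^r_c$, therefore in $H^\sigma$; likewise $\sum_{n=0}^N\op\(\psi_n\)g\to g$ in $H^\sigma$ for every $g\in H^\sigma$. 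Combining these, $\M_0\phi$ equals the $H^\sigma$-limit as $L\to\infty$ of $\sum_{\(l,n\):\,l\leqslant L}\op\(\psi_n\)\M_0\op\(\psi_l\)\phi$, which splits into an $\A$-part and an $\A^c$-part; the $\A^c$-part converges in $H^\sigma$ to $\(\M_0\)_c\phi$ by absolute convergence, so the $\A$-part converges in $H^\sigma$ along the cofinal exhaustion $\set{\(l,n\)\in\A:l\leqslant L}$ of $\A$, and norm convergence forces its limit to be the weak limit $\(\M_0\)_b\phi$. Hence $\M_0\phi=\(\M_0\)_b\phi+\(\M_0\)_c\phi$.

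I expect this last identification to be the only real obstacle: the ``bounded'' series converges merely in the weak operator topology, so the double series over $\N^2$ cannot be rearranged at will, and one must run the exhaustion-by-$\set{l\leqslant L}$ argument above, using that on $\mathcal{S}$ every term in sight is supported in the fixed interval $\left]a,b\right[$ and lies in $\mathcal{C}^r_c$, and that the two scalar Paley--Littlewood partitions of unity converge in the relevant norms. Everything else is a direct combination of Lemmas~\ref{bor} and~\ref{nuc}.
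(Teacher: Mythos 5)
Your proof is correct and takes the paper's intended route: the paper simply asserts that the corollary is immediate from Lemmas~\ref{bor} and~\ref{nuc}, and you supply precisely that assembly, reducing to $m=0$, choosing $\Lambda=\sup_{\textrm{supp }f}|F'|$, bounding the $\A$-part by Lemma~\ref{bor} and the $\A^c$-part in nuclear norm by Lemma~\ref{nuc}, and carefully checking on the dense subspace $\mathcal{S}$ that the two pieces really do sum to $\M_0$ (via the cofinal exhaustion by $\set{l\leqslant L}$ and uniqueness of the weak limit).
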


Finally, we investigate the regularity of the dependence in $t$ of the operator $\M_t$.

\begin{lm}\label{cont}
Let $r \in \N^*$ and $m \in \N$. Let $s \in \R$ with $m+ \frac{3}{2} < s < m + r - 1$. Let $U$ be an open set of $\R^D$ for some integers $D$. Let $t \mapsto f_t \in \mathcal{C}^r\(\R\)$ be a continuous function on $U$ whose values are supported in a bounded open interval $\left]a,b\right[$. Let $t \mapsto F_t \in \mathcal{C}^{r+1} \(\R\)$ be a continuous function on $U$ whose values are diffeomorphisms of $\R$ into itself.  For all $\phi \in \mathcal{S}$ and $t \in U$ set
\[
\M_t \phi = f_t.\(\phi^{\(m\)} \circ F_t\) 
.\]
Then for all $\epsilon > 0 $ the function $t\mapsto \M_t \in \L\(H^s, H^{s-m-\epsilon}\)$ is continuous on $U$. 
\end{lm}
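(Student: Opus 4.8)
The plan is to deduce the continuity from the uniform boundedness already established in Corollary~\ref{borne} together with the interpolation inequality \eqref{interpolation}, the only genuinely new input being an elementary estimate of $\M_t - \M_{t_0}$ as an operator into a Sobolev space of \emph{low} regularity.

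\emph{Reduction to $m=0$.} The operator $D^m\colon\phi\mapsto\phi^{(m)}$ is bounded from $H^s$ to $H^{s-m}$, independently of $t$, and $\M_t=\mathcal{N}_t\circ D^m$ where $\mathcal{N}_t\psi=f_t.(\psi\circ F_t)$; so it is enough to prove that $t\mapsto\mathcal{N}_t\in\L(H^{s-m},H^{s-m-\epsilon})$ is continuous for every $\epsilon>0$, and composing with $D^m$ will then give the general case. Since the hypothesis $m+\tfrac32<s<m+r-1$ amounts to $\tfrac32<s-m<r-1$, this reduced statement is exactly the lemma with $m=0$, so from now on I assume $m=0$ and $\tfrac32<s<r-1$.

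\emph{Local uniform bound and a weak estimate.} Fix $t_0\in U$ and a neighbourhood $V$ of $t_0$ with $\overline V\subseteq U$ compact. By continuity of $t\mapsto f_t$ and $t\mapsto F_t$ in the $\mathcal{C}^r$, resp.\ $\mathcal{C}^{r+1}$, topology, the $\mathcal{C}^r$-norm of $f_t$ and the $\mathcal{C}^{r+1}$-norm of $F_t$ on $[a,b]$ stay bounded for $t\in\overline V$; moreover $(t,x)\mapsto F_t'(x)$ is continuous and nowhere zero, hence $\sup_{t\in\overline V}\sup_{[a,b]}|F_t'|^{-1}<\infty$. Corollary~\ref{borne} then gives $\sup_{t\in V}\|\M_t\|_{\L(H^s,H^s)}<\infty$, so in particular $\|\M_t-\M_{t_0}\|_{\L(H^s,H^s)}$ is bounded on $V$. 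Next, for $\phi\in\mathcal{S}$ I write
\[
\M_t\phi-\M_{t_0}\phi=(f_t-f_{t_0}).(\phi\circ F_t)+f_{t_0}.\big((\phi\circ F_t)-(\phi\circ F_{t_0})\big),
\]
and, using that all the $f_t$ are supported in $]a,b[$, the Sobolev embedding $H^s\hookrightarrow\mathcal{C}^1_b(\R)$ (valid since $s>\tfrac32$), and the pointwise bounds $\|\phi\circ F_t\|_{L^\infty}\leqslant\|\phi\|_{L^\infty}$ and $\|(\phi\circ F_t)-(\phi\circ F_{t_0})\|_{L^\infty([a,b])}\leqslant\|\phi'\|_{L^\infty}\sup_{[a,b]}|F_t-F_{t_0}|$, I obtain a constant $C$ depending only on $a,b,s$ with
\[
\|\M_t\phi-\M_{t_0}\phi\|_{L^2}\leqslant C\big(\|f_t-f_{t_0}\|_{L^\infty}+\|f_{t_0}\|_{L^\infty}\|F_t-F_{t_0}\|_{\mathcal{C}^0([a,b])}\big)\|\phi\|_{H^s}.
\]
By density this holds for all $\phi\in H^s$, and the right-hand side shows $\|\M_t-\M_{t_0}\|_{\L(H^s,L^2)}\to0$ as $t\to t_0$.

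\emph{Interpolation.} If $\epsilon\geqslant s$ then $L^2\hookrightarrow H^{s-\epsilon}$ and the last estimate already gives continuity at $t_0$. If $0<\epsilon<s$, set $\theta=\epsilon/s\in(0,1)$, so that $s-\epsilon=\theta\cdot0+(1-\theta)s$, and \eqref{interpolation} yields
\[
\|\M_t-\M_{t_0}\|_{\L(H^s,H^{s-\epsilon})}\leqslant\|\M_t-\M_{t_0}\|_{\L(H^s,L^2)}^{\theta}\;\|\M_t-\M_{t_0}\|_{\L(H^s,H^s)}^{1-\theta},
\]
whose right-hand side tends to $0$ as $t\to t_0$ by the two previous steps. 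Since $t_0\in U$ was arbitrary, $t\mapsto\M_t\in\L(H^s,H^{s-\epsilon})$ is continuous, and composing with $D^m$ completes the proof in general. The only (minor) obstacle is checking that the constant in Corollary~\ref{borne} stays locally bounded in $t$ — this is where the continuity of $t\mapsto f_t$, $t\mapsto F_t$ and the non-vanishing of $F_t'$ enter — together with the $L^\infty$-control of the composition term $\phi\circ F_t-\phi\circ F_{t_0}$; both are routine.
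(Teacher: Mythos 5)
Your proof is correct, but it follows a genuinely different route from the paper's. The paper factors $\M_t = i \circ \M_t \circ \theta$ through the compact embedding $i$ of $H^s_0$ of a bounded interval into $H^{s-\epsilon}$, and thereby reduces norm continuity to continuity in the strong operator topology, which it then checks on the dense subspace of smooth compactly supported functions (using Corollary \ref{borne} for the uniform bound). You instead prove a quantitative estimate in the weak norm $\L(H^s,L^2)$ — controlled by $\|f_t-f_{t_0}\|_{L^\infty}$ and $\|F_t-F_{t_0}\|_{\mathcal{C}^0([a,b])}$ via the Sobolev embedding $H^s\hookrightarrow \mathcal{C}^1_b$ for $s>\tfrac32$ — and then interpolate against the locally uniform $\L(H^s,H^s)$ bound from Corollary \ref{borne} using \eqref{interpolation}. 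Your argument buys more: it yields an explicit (Hölder) modulus of continuity for $t\mapsto\M_t\in\L(H^s,H^{s-\epsilon})$, which is exactly the kind of estimate the paper needs anyway later on (in Lemma \ref{rescont}, where \eqref{interpolation} is invoked to verify condition (5) of Keller--Liverani), and it avoids the somewhat delicate step of upgrading strong-operator continuity to norm continuity through a compact factor. The paper's argument is softer and does not use the Sobolev embedding, but it needs the full $3/2$ threshold on $s$ anyway for Corollary \ref{borne}, so nothing is lost. All your individual steps (the reduction to $m=0$ via $D^m$, the local uniformity of the constant in Corollary \ref{borne}, the density argument, the case split on $\epsilon\geqslant s$) check out.
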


\begin{proof}
As previously, we may and will suppose that $m=0$. If $V$ is an open set of $\R$, we denote by $H_0^s\(V\)$ the closure in $H^s$ of the set of $\mathcal{C}^{\infty}$ functions compactly supported in $V$. Then, we can write
\[
\M_t = i \circ \M_t \circ \theta
\]
where in the left-hand side $\M_t$ is seen as an operator from $H^s$ ro $H^{s-\epsilon}$ and in the right-hand side $\M_t$ is seen as an operator from $H^s_0\(\left]a,b\right[\)$ to $H^s_0\(F^{-1}\left]a,b\right[\)$, $i$ is the (compact) embedding of $H^s_0\(F^{-1}\left]a,b\right[\)$ in $H^{s-\epsilon}$ and  $\theta : H^s \to H^s_0\(\left]a,b\right[\)$ is the operator of multiplication by a $\mathcal{C}^{\infty}$ function supported in $\left] a,b\right[$ with value $1$ on the support of the $f_t$ Thanks to the compacity of $i$, we only need to show that $t \mapsto \M_t \in \L\(H^s_0\(\left]a,b\right[\), H^s_0\(F^{-1}\left]a,b\right[\)\)$ is continuous for the strong topology, that is for all $\phi \in H^s_0\(\left]a,b\right[\)$ the function $t \mapsto M_t \phi \in H^s_0\(F^{-1}\left]a,b\right[\)$ is continuous. Thanks to Corollary \ref{borne}, we only need to prove this when $\phi$ is $\mathcal{C}^{\infty}$ function supported in $\left]a,b\right[$. But for such a function, the continuity is obvious (it even holds in $\mathcal{C}^r_0\(F^{-1}\left]a,b\right[ \)$).
\end{proof}

\begin{lm}\label{der}
Let $r \in \N^*, m \in \N$ and $N \in \N$. Let $s \in \R$ with $N+m+\frac{3}{2} < s < m + r -1$. Let $U$ be an open set of $\R^D$ for some integer $D$. Let $t \mapsto F_t \in \mathcal{C}^{r+1} \(\R\)$ be a $\mathcal{C}^N$ function on $U$ whose values are diffeomorphisms. Let $t \mapsto f_t \in \mathcal{C}^r\(\R\)$ be a $\mathcal{C}^N$ function on $U$ whose values are supported in a bounded open interval $\left]a,b\right[$. For all $\phi \in \mathcal{S}$ and $t \in U$ set
\[
\M_t \phi = f_t.\(\phi^{\(m\)} \circ F_t\) 
.\]
Then for all $\epsilon > 0 $ the function $t \in U \mapsto \M_t \in \L\(H^s, H^{s-m-N-\epsilon}\)$ is $\mathcal{C}^N$. 
\end{lm}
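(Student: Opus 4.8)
The plan is to argue by induction on $N$. The case $N=0$ is precisely Lemma \ref{cont}, so suppose $N\geqslant 1$ and that the statement is already known with $N$ replaced by $N-1$. Fix $\epsilon>0$ and $j\in\set{1,\dots,D}$, and let $e_j$ denote the $j$-th vector of the canonical basis of $\R^D$. Differentiating $f_t\cdot\(\phi^{\(m\)}\circ F_t\)$ formally in $t_j$ by the Leibniz and chain rules suggests that the $t_j$-derivative of $\M_t$ should be $G_j\(t\):=\M^{(1)}_{t,j}+\M^{(2)}_{t,j}$, where for $\phi\in\mathcal{S}$
\[
\M^{(1)}_{t,j}\phi=\drond{f_t}{t_j}\cdot\(\phi^{\(m\)}\circ F_t\),\qquad\M^{(2)}_{t,j}\phi=\(f_t\,\drond{F_t}{t_j}\)\cdot\(\phi^{\(m+1\)}\circ F_t\).
\]

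First I would observe that $\M^{(1)}_{t,j}$ and $\M^{(2)}_{t,j}$ are again operators of the form treated in Lemma \ref{der}, now with $N-1$ in place of $N$. Since $t\mapsto f_t\in\mathcal{C}^r\(\R\)$ is $\mathcal{C}^N$ and $t\mapsto F_t\in\mathcal{C}^{r+1}\(\R\)$ is $\mathcal{C}^N$, the amplitudes $\drond{f_t}{t_j}$ and $f_t\,\drond{F_t}{t_j}$ belong to $\mathcal{C}^r\(\R\)$, depend on $t$ in a $\mathcal{C}^{N-1}$ way (for the product, use that $f_t$ is compactly supported and that multiplication is a bounded bilinear operation), and are supported in a fixed bounded open interval; moreover $t\mapsto F_t$ is \emph{a fortiori} $\mathcal{C}^{N-1}$. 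For $\M^{(1)}_{t,j}$ the integer $m$ is unchanged, and the induction hypothesis applies as soon as $\(N-1\)+m+\frac{3}{2}<s<m+r-1$, which does hold; invoking it with $1+\epsilon$ in place of $\epsilon$ gives that $t\mapsto\M^{(1)}_{t,j}\in\L\(H^s,H^{s-m-N-\epsilon}\)$ is $\mathcal{C}^{N-1}$. For $\M^{(2)}_{t,j}$ the integer $m$ is replaced by $m+1$, so the required range becomes $\(N-1\)+\(m+1\)+\frac{3}{2}<s<\(m+1\)+r-1$, that is $N+m+\frac{3}{2}<s<m+r$, which is exactly where the hypothesis $s>N+m+\frac{3}{2}$ is used; invoking the induction hypothesis with $\epsilon$ gives that $t\mapsto\M^{(2)}_{t,j}\in\L\(H^s,H^{s-m-N-\epsilon}\)$ is $\mathcal{C}^{N-1}$. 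Hence $t\mapsto G_j\(t\)\in\L\(H^s,H^{s-m-N-\epsilon}\)$ is $\mathcal{C}^{N-1}$, in particular continuous.

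Next I would verify that $\drond{\M_t}{t_j}=G_j\(t\)$ holds as an identity in $\L\(H^s,H^{s-m-N-\epsilon}\)$. Fix $t_0\in U$. For a fixed $\phi\in\mathcal{S}$ the scalar map $t\mapsto\M_t\phi\(x\)=f_t\(x\)\phi^{\(m\)}\(F_t\(x\)\)$ is $\mathcal{C}^1$ for each $x$, with $t_j$-derivative $\(G_j\(t\)\phi\)\(x\)$; integrating along the segment from $t_0$ to $t_0+he_j$ (all functions being supported in a fixed compact set) and using Fubini's theorem one obtains, first pointwise and then in $H^{s-m-N-\epsilon}$,
\[
\M_{t_0+he_j}\phi-\M_{t_0}\phi=\int_0^h G_j\(t_0+\sigma e_j\)\phi\,\mathrm{d}\sigma,
\]
the right-hand side being a Bochner integral, legitimate because $\sigma\mapsto G_j\(t_0+\sigma e_j\)\phi$ is continuous into $H^{s-m-N-\epsilon}$. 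By Corollary \ref{borne}, $\M_t$, $\M^{(1)}_{t,j}$ and $\M^{(2)}_{t,j}$ are bounded from $H^s$ into $H^{s-m-N-\epsilon}$ uniformly for $t$ in a neighbourhood of $t_0$ (the constants there depend only on $a,b$ and on the relevant $\mathcal{C}^r$ and $\mathcal{C}^{r+1}$ norms, which are locally bounded in $t$); so, by density of $\mathcal{S}$ in $H^s$ and dominated convergence, the displayed identity extends to every $\phi\in H^s$ and therefore holds in $\L\(H^s,H^{s-m-N-\epsilon}\)$. Continuity of $G_j$ then yields $\M_{t_0+he_j}-\M_{t_0}=h\,G_j\(t_0\)+o\(h\)$ as $h\to 0$, so $t\mapsto\M_t$ admits the continuous partial derivatives $G_j$ on $U$; hence it is $\mathcal{C}^1$, and since each $G_j$ is $\mathcal{C}^{N-1}$ it is in fact $\mathcal{C}^N$.

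I expect the main difficulty to be bookkeeping rather than conceptual: tracking the Sobolev indices through the induction, and in particular recognising that one differentiation costs exactly one order of smoothness on $\phi$ (the passage from $m$ to $m+1$ in $\M^{(2)}_{t,j}$) while the threshold $s>N+m+\frac{3}{2}$ is exactly what is preserved under this shift. A secondary point is justifying rigorously the passage from the elementary pointwise computation to an operator-norm statement, which is where the uniform bounds of Corollary \ref{borne} together with the density of $\mathcal{S}$ in $H^s$ enter.
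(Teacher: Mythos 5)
Your proof is correct and follows essentially the same route as the paper: induction on $N$, with the candidate derivative $\drond{f_t}{t_j}\cdot\(\phi^{\(m\)}\circ F_t\)+f_t\,\drond{F_t}{t_j}\cdot\(\phi^{\(m+1\)}\circ F_t\)$ shown to be $\mathcal{C}^{N-1}$ by the induction hypothesis and then identified as the actual partial derivative via the fundamental theorem of calculus in operator form. The only cosmetic difference is that the paper reduces to $m=0$ and checks the identity by evaluating at points $y$ using continuity of $\delta_y$ on $H^{s-N-\epsilon}$, whereas you keep $m$ general (with careful index bookkeeping) and pass from the pointwise identity to the operator identity by density and the uniform bounds of Corollary \ref{borne}; both are valid.
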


\begin{proof}
The proof is an induction on $N$. The case $N=0$ has been dealt with in Lemma \ref{cont}. Let $N \geqslant 1$. As usual, we may suppose $m=0$. Furthermore, we only need to deal with the case $\epsilon < s -N - \frac{1}{2}$. Let $i \in \set{1,\dots,D}$ and for all $t = \(t_1,\dots,t_D\) \in U$ and $\phi \in H^s$, write
\[
A_t = \drond{f_t}{t_i} . \(\phi \circ F_t\) + f_t . \drond{F_t}{t_i} \(\phi' \circ F_t\)
.\]
By induction hypothesis, $t \mapsto A_t  \in \L\(H^s, H^{s-m-N-\epsilon}\) $ is $\mathcal{C}^{N-1}$. Consequently, we only need to check that $A_t$ is the partial derivative of $t \mapsto \M_t$ with respect to $t_i$. Let $T = \(T_1,\dots,T_d\) \in U$ and set for $t_i$ sufficiently close to $T_i$ :
\[
F\(t_i\) = \M_T + \int_{T_i}^{t_i} A_{t_1,\dots,\tau_i,\dots,t_d} \mathrm{d} \tau_i \in \L\(H^s, H^{s-m-N-\epsilon}\)
.\]
Now, if $\phi \in \mathcal{S}$ and $y \in \R$, since $\delta_y$ is continuous on $H^{s-N-\epsilon}$, we have
\[
F\(t_i\) \phi \(y\) = \M_T \phi \(y\) + \int_{T_i}^{t_i} A_{t_1,\dots,\tau_i,\dots,t_D}\phi\(y\) \mathrm{d} \tau_i \in \L\(H^s, H^{s-m-N-\epsilon}\) = \M_{T_1,\dots,t_i,\dots,T_D}\phi\(y\)
.\]
Thus $F\(t_i\) \phi = \M_{T_1,\dots,t_i,\dots,T_D}\phi$ and finally $F\(t_i\) =\M_{T_1,\dots,t_i,\dots,T_D}$. Consequently, $A_t$ is the derivative of $t \mapsto \M_t$ with respect to $t_i$.
\end{proof}

Now, we can prove our "local decomposition" lemma.

\begin{proof}[Proof of Lemma \ref{decloc}]
Set 
\[
\(\M_t\)_b = \sum_{l \hra n} \op \(\psi_n\) \M_t \op \(\psi_l\) \textrm{ and } \(\M_t\)_c = \sum_{l \nhra n} \op \(\psi_n\) \M_t \op \(\psi_l\)
.\]
Then the first point is a consequence of Lemma \ref{bor}. The second point is deduced from Lemma \ref{der} by a standard argument of dominated convergence (the domination being a consequence of Lemma \ref{nuc}). Finally, the third point is an immediate consequence of the second point and Lemma \ref{der}.
\end{proof}

\section{Decomposition of the transfer operator}\label{3}

Let $U$ be an open set of $\R^D$. Let $r \in \N^{*}$, $N \in \N$. Let $0 < \lambda <1$. Let $t \mapsto T_t \in \mathcal{C}^{r+1}\(S^1,S^1\)$ be a $\mathcal{C}^N$ function on $U$ whose values are expanding maps of the circle with expansion constant $\lambda^{-1}$ that is
\begin{equation} \label{dilatation}
\forall t \in U : \forall x \in S^1 : \left|T_t'\(x\)\right| \geqslant \lambda^{-1}
.\end{equation}
Let $t \mapsto g_t \in \mathcal{C}^r\(S^1\)$ be a $\mathcal{C}^N$ function on $U$ (in the application, we shall choose $D=2$, $t=\(\tau,u\)$, $T_t = T_\tau$ and $g_t = -u g -\log \left|T_\tau'\right|$). Our main object of study is the transfer operator defined for all $t \in U$ and all $\phi \in \C^{S^1}$ by
\[
\mathcal{L}_t \phi : x \mapsto \sum_{y \in T_t^{-1}\(\set{x}\)} e^{g_t\(y\)} \phi\(y\) 
.\]
We shall associate to $\mathcal{L}_t$ an operator $\mathcal{K}_t$ with similar properties. Then we shall apply Lemma 2.8 to get a similar decomposition for the operator $\mathcal{K}_t$. The properties of this decomposition are stated in Proposition \ref{fonda}.

We need further notation to do so. Let $K$ be a compact subset of $U$ and $\tilde{K}$ be a compact neighbourhood of $K$ in $U$. We may choose a finite cover $\alpha = \(V_{\omega}\)_{\omega \in \Omega}$ of $S^1$ by open intervals with the following properties :
\begin{enumerate}[label=\arabic*.]
\item for all $\omega \in \Omega$, the canonical projection $\pi
: \R \to S^1$ has a $\mathcal{C}^{\infty}$ local inverse $\kappa_\omega$ defined on $V_\omega$;
\item for all $t \in \tilde{K}$ and all $\omega \in \Omega$, the map $T_t$ induces a diffeomorphism from a neighbourhood of $\bar{V_\omega}$ to a neighbourhood of $T_t\(\bar{V_\omega}\)$;
\item for all $t \in \tilde{K}$ 
\begin{equation*}
\sup \set{\textrm{diam } V : V \in \bigwedge_{i=0}^{m-1} T_t^{-i} \alpha  } \underset{m \to + \infty}{\to} 0
;\end{equation*}
\item for all $t \in \tilde{K}$ and $m \in \N^*$ the elements of $\bigwedge_{i=0}^{m-1} T_t^{-i} \alpha $ are open intervals;
\item denoting for all $\omega \in \Omega$ by $W_\omega$ the open interval of $S^1$ with same center than $V_\omega$ but three times as long, the cover $\tilde{\alpha} = \(W_\omega\)_{\omega \in \Omega}$ also satisfies the four properties above.
\end{enumerate}
Indeed, these properties hold as soon as the diameter of the elements of $\alpha$ is small enough, "small enough" being uniform in $t$ thanks to the compacity of $\tilde{K}$.

For all $m \in \N^*$, $t \in U$ and $\overrightarrow{\omega} = \(\omega_0,\dots, \omega_{m-1}\) \in \Omega^m$ let us write $ V_{\overrightarrow{\omega},t} = \bigcap_{i=0}^{m-1} T_t^{-i} V_{\omega_i} $ and $ \alpha_{m,t} = \bigwedge_{i=0}^{n-1} T_t^{-i} \alpha = \set{V_{\overrightarrow{\omega},t} : \overrightarrow{\omega} \in \Omega^m}$. Replacing, $V_\omega$ by $W_\omega$, we define in the same way $W_{\overrightarrow{\omega},t}$ and $\tilde{\alpha}_{m,t}$.  
For all $m \in \N^*$, $t \in U$ and $x \in S^1$ write
\begin{equation}\label{gmt}
g_{m,t}\(x\) = \sum_{i=0}^{m-1} g_t\(T_t^i\(x\)\)
.\end{equation}
By a standard bounded distortion argument, there is a constant $M>0$ such that for all $t \in \tilde{K}$, $m \in \N^*$ and $V \in \tilde{\alpha}_{m,t}$, if $x,y \in V$ then
\begin{equation}\label{distortion}
\left|g_{m,t}\(x\) - g_{m,t}\(y\)\right| \leqslant M \textrm{ and } \left|\(T_t^m\)'\(x\)\right|  \leqslant M \left|\(T_t^m\)'\(y\)\right| 
.\end{equation}

Now, choose a $\mathcal{C}^{\infty}$ partition of unity $\(\theta_\omega\)_{\omega \in \Omega}$ adapted to the cover $\alpha$. For all $\omega \in \Omega$, choose a $\mathcal{C}^{\infty}$ function $h_\omega$ compactly supported in $\kappa_\omega \(V_\omega\)$ with $h_\omega = 1$ on $\kappa_\omega\(\textrm{supp } \theta_\omega\)$. For all $s \in \R$ set 
\[
\B^s = \bigoplus_{\omega \in \Omega} H^s
\]
equipped with the norm
\[
\|.\|_{\B^s} : \(\phi_\omega\)_{\omega \in \Omega} \mapsto \sqrt{\sum_{\omega \in \Omega} \left\|\phi_{\omega}\right\|_{H^s}^2}
,\]
which ensures that $\B^s$ is a Hilbert space.

Define
\[
S : \(\phi_\omega\)_{\omega \in \Omega} \in \(\C^{\R}\)^{\Omega} \mapsto \sum_{\omega \in \Omega}  \theta_{\omega} \(\phi_\omega \circ \kappa_\omega\) \in \C^{S^1}
\]
and
\[
P : \phi \in \C^{S^1} \mapsto \(h_\omega \phi \circ \pi \)_{\omega \in \Omega} \in \(\C^{\R}\)^{\Omega}
,\]
and notice that $S \circ P = Id$.

For all $t \in U$ define $\mathcal{K}_t$ on $\bigoplus_{\omega \in \Omega} \mathcal{S}$ by
\[
 \mathcal{K}_t \phi = \(P \circ \L_t \circ S\) \phi
.\]

\begin{prop}\label{fonda}
For all $t \in K$ and all $\frac{3}{2} < s < r-1$ the operator $\mathcal{K}_t$ extends to a bounded operator from $\B^s$ to itself. Moreover, for all $m \in \N^*$ the operator $K_t^m$ can be written as a sum \footnote{This decomposition does not depend of $s$ in the following sense: the operators $\(\mathcal{K}_t^m\)_b $ and $\(\mathcal{K}_t^m\)_c $ commute with the natural injections between spaces $\B^s$ for different values of $s$.}
\begin{equation}\label{decomp}
\mathcal{K}_t^m = \(\mathcal{K}_t^m\)_b + \(\mathcal{K}_t^m\)_c
\end{equation}
such that the following properties hold:
\begin{enumerate}[label=\roman*.]
\item for all $\frac{1}{2} < s < r$, there exists a constant $c>0$ such that for all $m \in \N^*$ and all $t \in K$:
\begin{equation}\label{bounded}
\left\|\(\mathcal{K}_t^m\)_b\right\|_{\L\(\B^s, \B^s\)} \leqslant c \lambda^{m\(s-\frac{1}{2}\)} \inf_{\beta \textup{ subcover of } \alpha_{m,t}} \sum_{V \in \beta} \exp \( \sup_{V} g_{m,t}\);
\end{equation}
\item for all integers $ 0 \leqslant k \leqslant N$, all $\frac{3}{2} + k < s < r-1$, all $m \in \N^*$ and all $\epsilon > 0$ the map
\[
t \mapsto \(\mathcal{K}_t^m\)_b \in \L\(\B^s, \B^{s-k-\epsilon}\)
\] 
extends to a $\mathcal{C}^k$ function on a neighbourhood of $K$;
\item for all integers $0 \leqslant k \leqslant N$ , all $s > k + \frac{3}{2}$, all $s' < r - 1$ and all $m \in \N^*$ the map
\[
t \mapsto \(\mathcal{K}_t^m\)_c \in \L_{nuc}\(\B^s,\B^{s'}\)
\]
extends to a $\mathcal{C}^k$ function on a neighbourhood of $K$.

\end{enumerate}
\end{prop}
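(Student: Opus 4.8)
The plan is to reduce Proposition \ref{fonda} to Lemma \ref{decloc} applied to the inverse branches of $T_t^m$. First I would make the iterate explicit. Since $S\circ P=\mathrm{Id}$ we have $\mathcal{K}_t^m=P\circ\L_t^m\circ S$, and if $\gamma_{j,t}$ ($1\leqslant j\leqslant d^m$, where $d$ is the degree of $T_t$, locally constant in $t$) denote the inverse branches of $T_t^m$ — each defined and of class $\mathcal{C}^{r+1}$ on all of $S^1$, with $\left|\gamma_{j,t}'\right|\leqslant\lambda^m$ by \eqref{dilatation} — then inserting the partition of unity $\(\theta_\omega\)$ in $\L_t^m S$ and using $\pi\circ\kappa_\omega=\mathrm{id}$ on $V_\omega$ one gets, for $\(\phi_\omega\)_\omega\in\bigoplus_\omega\mathcal{S}$ and $\omega'\in\Omega$,
\[
\(\mathcal{K}_t^m\(\phi_\omega\)_\omega\)_{\omega'}=\sum_{j=1}^{d^m}\sum_{\omega_0\in\Omega}f_{j,\omega_0,\omega',t}\cdot\(\phi_{\omega_0}\circ F_{j,\omega_0,\omega',t}\),
\]
where $f_{j,\omega_0,\omega',t}=h_{\omega'}\cdot\(e^{g_{m,t}}\theta_{\omega_0}\)\circ\gamma_{j,t}\circ\pi$ and $F_{j,\omega_0,\omega',t}$ is a global $\mathcal{C}^{r+1}$-diffeomorphism of $\R$ extending $\kappa_{\omega_0}\circ\gamma_{j,t}\circ\pi$ on $\kappa_{\omega'}\(V_{\omega'}\)$. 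The point of this rewriting is that the index set $\set{1,\dots,d^m}\times\Omega\times\Omega$ is finite and independent of $t$, that $f_{j,\omega_0,\omega',t}$ is supported in the fixed bounded interval $\mathrm{supp}\,h_{\omega'}$ and vanishes near its boundary (since $\mathrm{supp}\,\theta_{\omega_0}\Subset V_{\omega_0}$, so the formula for $F_{j,\omega_0,\omega',t}$ needs only be a diffeomorphism where $f_{j,\omega_0,\omega',t}\neq0$), and that $t\mapsto f_{j,\omega_0,\omega',t}\in\mathcal{C}^r\(\R\)$ and $t\mapsto F_{j,\omega_0,\omega',t}\in\mathcal{C}^{r+1}\(\R\)$ are $\mathcal{C}^N$ on a neighbourhood of $K$, with $\left|F_{j,\omega_0,\omega',t}'\right|\leqslant\lambda^m$ on $\mathrm{supp}\,f_{j,\omega_0,\omega',t}$. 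Thus each summand $\M_{j,\omega_0,\omega',t}:\phi\mapsto f_{j,\omega_0,\omega',t}\cdot\(\phi\circ F_{j,\omega_0,\omega',t}\)$ is exactly of the type treated in Lemma \ref{decloc}, with the common value $\Lambda=\lambda^m$; and boundedness of $\mathcal{K}_t$ on $\B^s$ for $\frac32<s<r-1$ follows from Corollary \ref{borne} applied termwise (case $m$ replaced by $1$).

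Next I would apply Lemma \ref{decloc} to each $\M_{j,\omega_0,\omega',t}$, obtaining a decomposition $\M_{j,\omega_0,\omega',t}=\(\M_{j,\omega_0,\omega',t}\)_b+\(\M_{j,\omega_0,\omega',t}\)_c$, and set $\(\mathcal{K}_t^m\)_b$ and $\(\mathcal{K}_t^m\)_c$ to be the corresponding finite sums, placed in the block $\(\omega',\omega_0\)$. Properties (ii) and (iii) are then immediate: a finite sum of $\mathcal{C}^k$ maps into $\L\(H^s,H^{s-k-\epsilon}\)$ (resp. into $\L_{nuc}\(H^s,H^{s'}\)$) is $\mathcal{C}^k$, and Lemma \ref{decloc} (iii) (resp. (ii)) provides exactly such maps, the numerological constraints on $k,s,s'$ being those of Proposition \ref{fonda} (ii), (iii). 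This decomposition does not depend on $s$, which gives the footnote.

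The substance is property (i). Here I would estimate each summand by Lemma \ref{decloc} (i),
\[
\left\|\(\M_{j,\omega_0,\omega',t}\)_b\right\|_{\L\(H^s,H^s\)}\leqslant c\left\|f_{j,\omega_0,\omega',t}\right\|_\infty\Lambda_{j,t}^s\sup\left|F_{j,\omega_0,\omega',t}'\right|^{-\frac12},
\]
where one may take $\Lambda_{j,t}=\sup_{\gamma_{j,t}\(S^1\)}\left|\(T_t^m\)'\right|^{-1}$, $\sup\left|F_{j,\omega_0,\omega',t}'\right|^{-\frac12}\leqslant\sup_{\gamma_{j,t}\(S^1\)}\left|\(T_t^m\)'\right|^{\frac12}$, and $\left\|f_{j,\omega_0,\omega',t}\right\|_\infty\leqslant\exp\(\sup_{\gamma_{j,t}\(S^1\)}g_{m,t}\)$. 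By a bounded distortion argument analogous to the one yielding \eqref{distortion}, carried out on the branch domains $\gamma_{j,t}\(S^1\)$ (whose forward images under $T_t$ shrink geometrically), $\sup_{\gamma_{j,t}\(S^1\)}\left|\(T_t^m\)'\right|$ is comparable to $\inf_{\gamma_{j,t}\(S^1\)}\left|\(T_t^m\)'\right|\geqslant\lambda^{-m}$, so the right-hand side is $\lesssim\lambda^{m\(s-\frac12\)}\exp\(\sup_{\gamma_{j,t}\(S^1\)}g_{m,t}\)$ — this is where $s>\frac12$ is used. Summing over the finitely many $\(\omega_0,\omega'\)$ and over the $d^m$ values of $j$, using that $\(\gamma_{j,t}\(S^1\)\)_j$ is a partition of $S^1$, gives $\left\|\(\mathcal{K}_t^m\)_b\right\|_{\L\(\B^s,\B^s\)}\lesssim\lambda^{m\(s-\frac12\)}\sum_j\exp\(\sup_{\gamma_{j,t}\(S^1\)}g_{m,t}\)$; comparing this last sum with a minimal subcover $\beta$ of $\alpha_{m,t}$ — each branch domain contains some element of $\beta$, distinct branch domains receiving distinct elements, and bounded distortion again bounding $\sup_{\gamma_{j,t}\(S^1\)}g_{m,t}$ by the value of $g_{m,t}$ on such an element — yields \eqref{bounded}.

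The main obstacle, and where essentially all the care goes, is the first step: exhibiting $\mathcal{K}_t^m$ as a $t$-independent finite sum of operators $\M_{j,\omega_0,\omega',t}$ genuinely satisfying the hypotheses of Lemma \ref{decloc} — supported in a fixed bounded interval, extending to diffeomorphisms of $\R$, depending $\mathcal{C}^N$ on $t$ near $K$ with the uniform bound $\left|F_{j,\omega_0,\omega',t}'\right|\leqslant\lambda^m$ — together with, inside (i), the bounded distortion bookkeeping that turns a sum over the inverse branches of $T_t^m$ into the infimum over subcovers of $\alpha_{m,t}$ (so that the spectral radius extracted later is governed by the topological pressure rather than by an over-counted quantity). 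Once this is in place, the rest is a routine assembly of Lemma \ref{decloc}.
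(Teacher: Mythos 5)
Your construction is genuinely different from the paper's, and the difference is where the difficulty lies. The paper does \emph{not} cut $\mathcal{K}_t^m$ along the $d^m$ inverse branches of $T_t^m$. It instead fixes, for each $t$, a subset $I_{t,m}\subseteq\Omega^m$ realizing the infimum in \eqref{choix} (a ``minimal subcover'' in the $W$-sense), builds a partition of unity $\(\chi_{\overrightarrow\omega,t}\)_{\overrightarrow\omega\in I_{t,m}}$ subordinate to $\(W_{\overrightarrow\omega,t}\)_{\overrightarrow\omega\in I_{t,m}}$, writes $\mathcal{K}_t^m=\sum_{\overrightarrow\omega\in I_{t,m}}\mathcal{K}^m_{\overrightarrow\omega,t}$, and applies Lemma~\ref{decloc} to each summand. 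The bound \eqref{bounded} then comes out directly: by construction the sum of the $\exp\(\sup_{V_{\overrightarrow\omega,t}}g_{m,t}\)$ over $\overrightarrow\omega\in I_{t,m}$ \emph{is} (within a factor $2$, via \eqref{ruse}) the infimum over subcovers of $\alpha_{m,t}$. Since $I_{t,m}$ depends on $t$, the paper also has to carry out the construction on a neighbourhood $U_{t,m}$ of each $t$ and glue the decompositions with a partition of unity in $t$ — a step your construction avoids, which is the one genuine advantage of your route.

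The genuine gap is in your derivation of property~(i). After summing Lemma~\ref{decloc}\,(i) over all $d^m$ branches you arrive at a bound of the form $\lambda^{m\(s-\frac12\)}\sum_j\exp\(\sup_{J_j}g_{m,t}\)$, where the $J_j$ are the branch domains of $T_t^m$, and you then claim this is dominated, with a constant uniform in $m$ and $t$, by $\inf_{\beta}\sum_{V\in\beta}\exp\(\sup_Vg_{m,t}\)$. Your argument for this is that ``each branch domain contains some element of $\beta$, distinct branch domains receiving distinct elements''. This containment is not justified and can fail: elements of $\alpha_{m,t}$ need not respect the (arbitrary) boundaries of the branch domains, and nothing forces $\min_{V\in\beta}\mathrm{diam}\,V\leqslant\mathrm{diam}\,J_j$ for every $j$ — in the non-conformal regime where $\inf|T_t'|$ and $\sup|T_t'|$ differ, $(\sup|T_t'|/\inf|T_t'|)^m$ grows, so some $J_j$ can be shorter than every element of $\beta$ meeting it, and then $J_j$ contains no element of $\beta$ at all. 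Without the injection the comparison is unproved, and with only ``intersects'' instead of ``contains'' the multiplicity of the assignment is not $m$-uniform. In addition, your use of bounded distortion here is over the full branch domains $\gamma_{j,t}\(S^1\)$, which is a different (and unstated, though standard) estimate from \eqref{distortion}, which is only asserted over elements of $\tilde\alpha_{m,t}$; and the phrase ``inverse branches [\dots] each defined and of class $\mathcal{C}^{r+1}$ on all of $S^1$'' is incorrect as stated — an inverse branch of an expanding circle map is only defined on an arc, never globally on $S^1$ (though this is repairable by working with the $d^m$ local branches over each $V_{\omega'}$, which is essentially what your formula uses). You correctly flag the bookkeeping as the crux of the proof, but the step that converts the branch sum into the subcover infimum is precisely the one the paper's construction is designed to make a triviality, and your proposal does not supply a working substitute for it.
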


\begin{proof}
The boundedness of $\mathcal{K}_t$ on $\B^s$ will be a consequence of the decomposition \eqref{decomp} for $m=1$. Fix an integer $m \in \N^*$.

If $t \in K$ choose a subset $I=I_{t,m}$ of $\Omega^m$ that reaches the infimum
\begin{equation}\label{choix}
\sum_{\overrightarrow{\omega} \in I} \exp\( \sup_{V_{\overrightarrow{\omega},t}} g_{m,t} \) = \inf_{J \subseteq \Omega^m, S^1 = \bigcup_{\overrightarrow{\omega} \in J} W_{\overrightarrow{\omega},t}} \sum_{\overrightarrow{\omega} \in J} \exp\( \sup_{V_{\overrightarrow{\omega},t}} g_{m,t} \)
.\end{equation}
Then find a neighbourhood $U_{t,m} \subseteq \tilde{K}$ of $t$ in $U$ with the following properties:
\begin{enumerate}[label=\arabic*.]
\item for all $t' \in U_{t,m}$
\begin{equation}\label{recouvrement}
S^1 = \bigcup_{\overrightarrow{\omega} \in I} W_{\overrightarrow{\omega},t'};
\end{equation}
\item for all $t' \in U_{t,m}$
\begin{equation}\label{ruse}
\sum_{\overrightarrow{\omega} \in I} \exp\( \sup_{V_{\overrightarrow{\omega},t'}} g_{m,t'} \) \leqslant 2 \inf_{J \subseteq \Omega^m,S^1 = \bigcup_{\overrightarrow{\omega} \in J} V_{\overrightarrow{\omega},t'} } \sum_{\overrightarrow{\omega} \in J} \exp\( \sup_{V_{\overrightarrow{\omega},t'}} g_{m,t'} \) ;
\end{equation}
\item for all $t' \in U_{t,m}$ and $\overrightarrow{\omega} \in I$ 
\begin{equation}\label{maitrise}
\frac{1}{2} \inf_{W_{\overrightarrow{\omega},t}} \left|\(T_t^m\)'\right| \leqslant \inf_{W_{\overrightarrow{\omega},t'}} \left|\(T_{t'}^m\)'\right|\leqslant 2 \inf_{W_{\overrightarrow{\omega},t}} \left|\(T_t^m\)'\right|
.\end{equation}
\end{enumerate}
We explain briefly how to find such a neighbourhood.  The first point is easy, one only needs to notice that \eqref{recouvrement} is equivalent to
\[
\forall x \in S^1 : \sum_{\(\omega_0,\dots,\omega_{m-1}\) \in I} \prod_{i=0}^{m-1} d\(T_t^i\(x\), S^1 \setminus W_{\omega_i}\) > 0
.\]
The third point is an argument of continuity, using the fact that for all $t' \in \tilde{K}$ the set $W_{\overrightarrow{\omega},t'}$ is an interval. The second point is more complicated. If $J \subseteq \Omega^m$ is such that $\(W_{\overrightarrow{\omega},t}\)_{\overrightarrow{\omega} \in J}$ doesn't cover $S^1$ then for $t'$ sufficiently close to $t$, $\(V_{\overrightarrow{\omega},t'}\)_{\overrightarrow{\omega} \in J}$ doesn't cover $S^1$. Consequently, for $t'$ sufficiently close to $t$ we have
\[
\inf_{J \subseteq \Omega^m,S^1 = \bigcup_{\overrightarrow{\omega} \in J} W_{\overrightarrow{\omega},t} } \sum_{\overrightarrow{\omega} \in J} \exp\( \sup_{V_{\overrightarrow{\omega},t'}} g_{m,t'} \)  \leqslant \inf_{J \subseteq \Omega^m,S^1 = \bigcup_{\overrightarrow{\omega} \in J} V_{\overrightarrow{\omega},t'} } \sum_{\overrightarrow{\omega} \in J} \exp\( \sup_{V_{\overrightarrow{\omega},t'}} g_{m,t'} \) 
.\]
But the infimum on the left-hand side of this inequality is taken on a set that does not depend of $t'$, so we can use the same kind of argument as for the third point, recalling \eqref{choix}.

Now, $\(U_{t,m}\)_{t \in K}$ is an open cover of $K$ and consequently, one only needs to get the decomposition \eqref{decomp} on each of its elements separately (then glue the different decompositions using a partition of unity). So fix $t_0 \in K$ and write $I=I_{t_0,m}$ the subset of $\Omega^m$ that appears in the definition of $U_{t_0,m}$. For all $\omega \in \Omega$ choose a $\mathcal{C}^{\infty}$ function $\tilde{\chi}_\omega : S^1 \to \R$ such that $0 \leqslant \tilde{\chi}_\omega \leqslant 1$ and $\tilde{\chi}_{\omega}\(x\) > 0$ if and only if $x \in W_\omega$. Then for all $m \in \N^*$, $t \in U_{t_0,m}$ and $\overrightarrow{\omega} = \(\omega_0,\dots,\omega_{m-1}\) \in I$, set:
\[
\tilde{\chi}_{\overrightarrow{\omega},t} : x \in S^1 \mapsto \prod_{i=0}^{m-1} \tilde{\chi}_{\omega_i} \(T_t^i\(x\)\)
\]
and
\[
\chi_{\overrightarrow{\omega},t} : x \in S^1 \mapsto \frac{\tilde{\chi}_{\overrightarrow{\omega},t}\(x\)}{\sum_{\overrightarrow{\omega}' \in I} \tilde{\chi}_{\overrightarrow{\omega}',t}\(x\)} 
,\]
which is well-defined thanks to \eqref{recouvrement}.
Thus we have for all $t \in U_{t_0,m}$:
\begin{equation*}
\sum_{\overrightarrow{\omega} \in I} \chi_{\overrightarrow{\omega},t} = 1 \textrm{ and } \forall \overrightarrow{\omega} \in I : \forall x \in S^1 : \chi_{\overrightarrow{\omega},t}\(x\) > 0 \Leftrightarrow x \in W_{\overrightarrow{\omega},t}  
.\end{equation*}
Then for all $\overrightarrow{\omega} \in I$, $t \in U_{t_0,m}$ and $\phi \in \mathcal{C}^{\infty}\(S^1\)$, define
\begin{equation*}
\L^m_{\overrightarrow{\omega},t} \phi : x \in S^1 \mapsto \sum_{T_t^m\(y\) = x} \chi_{\overrightarrow{\omega},t}\(y\) e^{g_{m,t}\(y\)} \phi\(y\) = \(\chi_{\overrightarrow{\omega},t} e^{g_{m,t}} \phi\) \circ \(\left. T_t^m \right|_{W_{\overrightarrow{\omega},t}}\)^{-1}\(x\)
\end{equation*}
and then 
\begin{equation*}
\mathcal{K}^m_{\overrightarrow{\omega},t} = D \circ  \L^m_{\overrightarrow{\omega},t} \circ S
.\end{equation*}
These definitions immediately imply for all $t \in U_{t_0,m}$
\begin{equation*}
\L^m_t = \sum_{\overrightarrow{\omega} \in I} \L^m_{\overrightarrow{\omega},t} \textrm{ and } \mathcal{K}^m_t = \sum_{\overrightarrow{\omega} \in I} \mathcal{K}^m_{\overrightarrow{\omega},t}
.\end{equation*}

Now, fix $\overrightarrow{\omega} \in I$ and write $\mathcal{K}^m_{\overrightarrow{\omega},t}$ as a matrix of operators $\(A_{\omega,\omega',t}\)_{\omega,\omega' \in \Omega}$ that is, for all $\phi = \(\phi_\omega\)_{\omega \in \Omega} \in \bigoplus_{\omega \in \Omega} \mathcal{S}$, we have
\[
\mathcal{K}^m_{\overrightarrow{\omega},t}\phi = \( \sum_{\omega' \in \omega} A_{\omega,\omega',t}\phi_{\omega'}\)_{\omega \in \Omega}
.\]

If $\omega,\omega' \in \Omega$ and $\phi \in \mathcal{S}$, we have
\begin{equation}\label{loc}
A_{\omega,\omega',t}\phi = h_\omega \( \chi_{\overrightarrow{\omega},t} e^{g_{m,t}} \theta_{\omega'} \) \circ \(\left. T_t^m \right|_{W_{\overrightarrow{\omega},t}}\)^{-1} \circ \pi . \(\phi \circ \kappa_{\omega'} \circ \(\left. T_t^m \right|_{W_{\overrightarrow{\omega},t}}\)^{-1} \circ \pi\)
.\end{equation}
The map $\kappa_{\omega'} \circ \(\left. T_t^m \right|_{W_{\overrightarrow{\omega},t}}\)^{-1} \circ \pi $ may be extended in a $\mathcal{C}^{r+1}$-diffeomorphism $F_t$ of $\R$ in a consistent way: take the inverse of a lift of $T_t^m$ that extends $\kappa_\omega \circ \(\left. T_t^m \right|_{W_{\overrightarrow{\omega},t}}\) \circ \left. \pi \right|_{\kappa_{\omega'} \(V_{\omega'}\)}$. Now apply Lemma \ref{decloc} with this $F_t$, $f_t = h_\omega \( \chi_{\overrightarrow{\omega},t} e^{g_{m,t}} \theta_{\omega'} \) \circ \pi \circ F_t $ and $\Lambda = 2 \(\inf_{W_{\overrightarrow{\omega},t_0}} \left|\(T_{t_0}^m\)'\right|\)^{-1}$, recalling \eqref{maitrise}. Thus, using Lemma \ref{decloc}, we get a decomposition of $A_{\omega,\omega',t}$ in
\[
A_{\omega,\omega',t} = \(A_{\omega,\omega',t}\)_b + \(A_{\omega,\omega',t}\)_c
\]
with the expected regularity and, for all $\frac{1}{2} < s < r$, a constant $c_s$, that only depends of $s$, such that
\begin{align*}
\left\|\(A_{\omega,\omega',t}\)_b\right\|_{H^s \to H^s} &  \leqslant c_s \exp\( \sup_{W_{\overrightarrow{\omega},t}} g_{m,t}\) \Lambda^s  \sup_{W_{\overrightarrow{\omega},t}} \left|\(T_t^m\)'\right|^{\frac{1}{2}} \\
     & \leqslant  4^s c_s e^M \exp\( \sup_{V_{\overrightarrow{\omega},t}} g_{m,t}\) \( \inf_{W_{\overrightarrow{\omega},t}} \left|\(T_t^m\)'\right| \)^{-s} \sup_{W_{\overrightarrow{\omega},t}} \left|\(T_t^m\)'\right|^{\frac{1}{2}} \\
     & \leqslant 4^s c_s e^M \sqrt{M}  \exp\( \sup_{V_{\overrightarrow{\omega},t}} g_{m,t}\) \( \inf_{W_{\overrightarrow{\omega},t}} \left|\(T_t^m\)'\right| \)^{-s+\frac{1}{2}} \\
     & \leqslant 4^s c_s e^M \sqrt{M} \exp\( \sup_{V_{\overrightarrow{\omega},t}} g_{m,t}\) \lambda^{m\(s-\frac{1}{2}\)}
,\end{align*}
where $M$ has been introduced in \eqref{distortion} (that we used on second and third line). We also used \eqref{maitrise} on second line and \eqref{dilatation} on the last line. From this, we deduce a decomposition of $\mathcal{K}^m_{\overrightarrow{\omega},t}$ with the expected regularity and the same estimate of the operator norm up to a multiplicative factor $\(\# \Omega\)^2$. Summing over $\overrightarrow{\omega} \in I$, we get the decomposition \eqref{decomp} with the expected regularity and 
\begin{align*}
\left\|\(\mathcal{K}^m_t\)_b\right\|_{H^s \to H^s} & \leqslant 4^s c_s e^M \sqrt{M} \(\# \Omega\)^2 \sum_{\overrightarrow{\omega} \in I} \exp\( \sup_{V_{\overrightarrow{\omega},t}} g_{m,t}\) \lambda^{m\(s-\frac{1}{2}\)} \\
     & \leqslant 2 \times 4^s c_s e^M \sqrt{M} \(\# \Omega\)^2 \lambda^{m\(s-\frac{1}{2}\)} \inf_{\beta \textrm{ subcover of } \alpha_{m,t}} \sum_{V \in \beta} \exp \( \sup_{V} g_{m,t}\)
\end{align*} 
thanks to \eqref{ruse}.

\end{proof}

\begin{rmq}\label{transder}
As a consequence of Theorem \ref{fonda} for all integers $0 \leqslant k \leqslant N$, for all $\frac{3}{2} < s < r-1$ , all $m \in \N^*$ and all $\epsilon$ the map
\[
t \mapsto \mathcal{K}_t \in \L\(\B^s, \B^{s-k-\epsilon}\)
\]
is $\mathcal{C}^k$ on a neighborhood of $K$.
\end{rmq}

\section{Flat trace of the transfer operator}\label{4}

This section is dedicated to the definitions and basic properties of the "flat trace" and "flat determinant", which are key tools in the study of dynamical determinants in \cite{Bal2} or \cite{Tsu}. Although we will not define these objects in the same way, it could be shown that both definitions agree in most cases.

For all $\epsilon >0$ we set for all $x \in \R$:
\begin{equation*}
\rho_\epsilon \(x\) = \frac{1}{\epsilon} \chi\(\frac{x}{\epsilon}\) \textrm{ and } \chi_\epsilon = \F^{-1}\(\rho_{\epsilon}\),
\end{equation*}
where $\F^{-1}$ is the inverse of the Fourier transform and $\rho : \R \to \R$ is a $\mathcal{C}^{\infty}$ function, taking values in $\left[0,1\right]$, compactly suported, of integral $1$ and identically equals to $1$ one a neighbourhood of $0$.

For all $\phi \in \mathcal{S}'$ we write
\begin{equation*}
J_\epsilon \phi = \rho_\epsilon \ast \(\chi_\epsilon \phi\)
\end{equation*}
and then for all $\phi = \(\phi_{\omega}\)_{\omega \in \Omega} \in \bigoplus_{\omega \in \Omega} \mathcal{S}'$
\begin{equation*}
I_\epsilon \phi = \(J_\epsilon \phi_{\omega}\)_{\omega \in \Omega}
.\end{equation*}

The basic properties of these operators are listed in the following lemma.

\begin{lm}\label{convolution}
For all $s,s' > \frac{1}{2}$ the following properties hold:
\begin{enumerate}[label=\roman*.]
\item for all $\epsilon >0$ , $J_\epsilon$ (resp. $I_\epsilon$) defines a nuclear operator of order $0$ from $H^s$ to $H^{s'}$ (resp. from $\B^s$ to $\B^{s'}$);
\item there is a constant $C$ such that for all $\epsilon \in \left]0,1\right]$ we have $\left\|J_\epsilon\right\|_{H^s \to H^{s}} \leqslant C$ (resp. $\left\|I_\epsilon\right\|_{\B^s \to \B^{s}} \leqslant C$);
\item for all $\phi \in H^s$ (resp. $\B^s$) , $J_\epsilon \phi$ (resp. $I_\epsilon \phi$) tends to $\phi$ in $H^s$ (resp. $\B^s$) as $\epsilon$ tends to $0$.
\end{enumerate}
\end{lm}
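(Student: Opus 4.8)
The plan is to reduce everything to the scalar operator $J_\epsilon$ and then treat the three points in turn. Since $\Omega$ is finite, $I_\epsilon$ is, under the obvious identification, a direct sum of $\#\Omega$ copies of $J_\epsilon$ acting on $\B^s=\bigoplus_{\omega\in\Omega}H^s$; a finite direct sum of nuclear operators of order $0$ is nuclear of order $0$, the operator norm of $I_\epsilon$ on $\B^s$ is comparable to that of $J_\epsilon$ on $H^s$, and convergence in $\B^s$ is coordinatewise convergence, so each assertion for $I_\epsilon$ follows immediately from the one for $J_\epsilon$. The starting observation for $J_\epsilon$ is that, for $\phi\in\mathcal S$, $J_\epsilon\phi\(x\)=\s{\R}{\rho_\epsilon\(x-y\)\chi_\epsilon\(y\)\phi\(y\)}{y}$, so $J_\epsilon$ is the integral operator with kernel $k_\epsilon\(x,y\)=\rho_\epsilon\(x-y\)\chi_\epsilon\(y\)$. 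For each fixed $\epsilon>0$ this kernel belongs to $\mathcal S\(\R^2\)$: $\rho_\epsilon$ is $\mathcal C^\infty$ with support in a bounded interval, while $\chi_\epsilon=\F^{-1}\(\rho_\epsilon\)$, being the inverse Fourier transform of a compactly supported smooth function, is a Schwartz function; the rapid decay of $k_\epsilon$ in the $x$ variable comes from the fact that $k_\epsilon\(x,y\)$ vanishes unless $\left|x-y\right|$ is bounded, which forces $\left|x\right|$ to be comparable to $\left|y\right|$, where $\chi_\epsilon$ already decays rapidly.

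For the first point I would use the classical principle that an integral operator with Schwartz kernel is nuclear of order $0$ between arbitrary Sobolev spaces. Concretely, let $\(h_n\)_{n\geqslant 0}$ be the Hermite functions, the orthonormal basis of $L^2\(\R\)$ diagonalising the harmonic oscillator $\mathcal H=-\partial_x^2+x^2$ with eigenvalues $2n+1$; then $\(h_m\otimes h_n\)_{m,n\geqslant 0}$ is an orthonormal basis of $L^2\(\R^2\)$, and $\mathcal H\otimes 1+1\otimes\mathcal H$ — which has eigenvalue $2\(m+n+1\)$ on $h_m\otimes h_n$ and preserves $\mathcal S\(\R^2\)$ — forces the coefficients $c_{m,n}=\langle k_\epsilon,h_m\otimes h_n\rangle_{L^2}$ to be rapidly decreasing in $\(m,n\)$. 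Writing $J_\epsilon=\sum_{m,n\geqslant 0}c_{m,n}\,\langle\cdot,h_n\rangle_{L^2}\otimes h_m$ then exhibits a nuclear decomposition from $H^s$ to $H^{s'}$: as $s>0$, the functional $\langle\cdot,h_n\rangle_{L^2}$ has norm $\left\|h_n\right\|_{H^{-s}}\leqslant 1$ on $H^s$, while $\left\|h_m\right\|_{H^{s'}}$ grows only polynomially in $m$ (bound $\left\|h_m\right\|_{H^k}$ for integer $k$ with the creation and annihilation operators, then interpolate), so the sequence $\(c_{m,n}\left\|h_n\right\|_{H^{-s}}\left\|h_m\right\|_{H^{s'}}\)_{m,n}$ is itself rapidly decreasing, hence $q$-summable for every $q>0$ — which is exactly the statement that $J_\epsilon$ is nuclear of order $0$. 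I expect this to be the only step with genuine content; the care needed lies in checking that $k_\epsilon$ decays in both variables and in keeping track of $q$-summability for \emph{all} $q>0$, not just absolute summability.

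For the second point I would factor $J_\epsilon=\mathcal C_\epsilon\circ M_\epsilon$, where $M_\epsilon$ is multiplication by $\chi_\epsilon$ and $\mathcal C_\epsilon$ is convolution by $\rho_\epsilon$. The operator $\mathcal C_\epsilon$ is the Fourier multiplier by $\xi\mapsto\widehat{\rho_\epsilon}\(\xi\)=\widehat\rho\(\epsilon\xi\)$, whence $\left\|\mathcal C_\epsilon\right\|_{\L\(H^s,H^s\)}\leqslant\left\|\widehat\rho\right\|_{\infty}$ uniformly in $\epsilon$; and the derivatives of $\chi_\epsilon$ are bounded uniformly for $\epsilon\in\left]0,1\right]$, since $\left\|\chi_\epsilon^{\(k\)}\right\|_{\infty}\leqslant\frac1{2\pi}\s{\R}{\left|\xi\right|^k\rho_\epsilon\(\xi\)}{\xi}=\frac{\epsilon^k}{2\pi}\s{\R}{\left|u\right|^k\rho\(u\)}{u}$, so, multiplication by a function with bounded derivatives up to a fixed order being bounded on $H^s$ with norm controlled by those bounds, $\left\|M_\epsilon\right\|_{\L\(H^s,H^s\)}$ is bounded uniformly for $\epsilon\in\left]0,1\right]$; combining the two gives the second point. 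For the third point I would first prove $J_\epsilon\phi\to\phi$ in $H^s$ for $\phi\in\mathcal S$ by a direct computation: $\rho_\epsilon$ being an approximate identity, $\mathcal C_\epsilon\psi\to\psi$ in $H^s$ for each fixed $\psi$ by dominated convergence on the Fourier side, and $\chi_\epsilon\to 1$ uniformly on compact sets with uniformly bounded derivatives (the normalisation of $\rho_\epsilon,\chi_\epsilon$ being chosen precisely so that this limit is $1$), hence $M_\epsilon\phi\to\phi$ in $H^s$; then density of $\mathcal S$ in $H^s$ together with the uniform bound from the second point promotes this to $J_\epsilon\phi\to\phi$ for every $\phi\in H^s$ via the usual three-term estimate. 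Everything outside the first point — the two convergence statements, the uniform bounds, and the passage from $J_\epsilon$ to $I_\epsilon$ — is routine.
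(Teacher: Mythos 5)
Your proof is correct and follows essentially the same route as the paper: factor $J_\epsilon$ as convolution composed with multiplication to get the uniform bounds of point (ii), use density of $\mathcal{S}$ plus those bounds for point (iii), and derive point (i) from the fact that the kernel is a Schwartz function. The only difference is that where the paper disposes of point (i) in one line by noting that $J_\epsilon$ factorizes through the nuclear space $\mathcal{S}$ (citing Grothendieck), you make the same fact explicit via the Hermite expansion of the kernel — a more self-contained but equivalent argument.
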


\begin{proof}
The first point is immediate since $J_\epsilon$ factorizes through $\mathcal{S}$ which is a nuclear space (see for instance the second part of \cite{Groth}).

The norm in $\L\(H^s,H^s\)$ of $\phi \in H^s \to \rho_\epsilon \ast \phi \in H^s$ is bounded by $\left\|\widehat{\rho_\epsilon}\right\|_{L^\infty} \leqslant \left\|\rho_\epsilon \right\|_{L^1}=1$.  We get a uniform bound on the norm in $\L\(H^s,H^s\)$ of $\phi \in H^s \mapsto \chi_\epsilon \phi$ by a classical Leibniz inequality (for instance corollary 4.2.2 of \cite{Trieb}).

The third point is a consequence of the second one and the fact that the convergence holds for function in $\mathcal{S}$ by an argument of density.
\end{proof}

Thus if $s \in \R$ and $A$ is a bounded operator from $H^s$ to itself, $A \circ J_\epsilon$ is a nuclear operator and since the Hilbert $H^s$ as the approximation property, we can set
\begin{equation*}
\textrm{tr}_\epsilon A = \textrm{tr} \( A \circ J_\epsilon \)
,\end{equation*}
and then the "flat trace" of $A$ is defined as
\begin{equation*}
\textrm{tr}^{\flat} A = \lim_{\epsilon \to 0} \textrm{tr}_\epsilon A
,\end{equation*}
provided the limit exists. Replacing $J_\epsilon$ by $I_\epsilon$ we get similar definitions for operators from $\B^s$ to itself. If we write a bounded operator $A$ on $\B^s$ as a matrix $\(A_{\omega, \omega'}\)_{\omega,\omega' \in \Omega}$ of bounded operators on $H^s$, it can easily be shown that
\begin{equation*}
\textrm{tr}_\epsilon A = \sum_{\omega \in \Omega} \textrm{tr}_\epsilon A_{\omega,\omega}.
\end{equation*}
Thus the the flat trace of $A$ is defined if and only if the flat trace of $A_{\omega,\omega}$ is defined for all $\omega \in \Omega$, if so 
\begin{equation*}
\textrm{tr}^{\flat} A = \sum_{\omega \in \Omega} \textrm{tr}^{\flat} A_{\omega,\omega}.
\end{equation*}
From the third point of Lemma \ref{convolution}, the "flat trace" coincides with the usual trace for nuclear operators.

Now, if $A : \B^s \to \B^s$ is such that for all $m \in \N^*$ the flat trace of $A^m$ is defined, set
\begin{equation}\label{flatdet}
\textrm{det}^{\flat}\(I - z A\) = \exp\(-\sum_{n \geqslant 1} \frac{\textrm{tr}^{\flat}\(A^n\)}{n}z^n\) = \sum_{n \geqslant 0} a_n z^n \in \C\left[\left[z\right]\right]
\end{equation} 
that is $\sum_{n \geqslant 0} a_n z^n$ is the formal power series recursively defined by
\begin{equation}\label{coeffdet}
a_0 = 1 \textrm{ and } a_n = -\frac{1}{n} \sum_{k=0}^{n-1} a_k \textrm{tr}^{\flat}\(A^{n-k}\) \textrm{ for } n \geqslant 1
.\end{equation}
Thus if $A$ and $B$ are operators such that $AB=BA= 0$ and for all $m \in \N^*$ the flat traces of $A^m$ and $B^m$ are defined, we have
\begin{equation*}
\textrm{det}^{\flat}\(I - z \(A+B\)\) = \textrm{det}^{\flat}\(I - z A\) \textrm{det}^{\flat}\(I - z B\)
.\end{equation*}

First, we show that all the powers of the transfer operator $\mathcal{K}_t$ have a flat trace.

\begin{lm}\label{ltrace}
For all $m \in \N^*$, all $t \in U$ and all $\frac{3}{2} < s < r-1$ the flat trace of the operator $\mathcal{K}_t^m : \B^s \to \B^s$ is defined and
\begin{equation}\label{trace}
\textrm{tr}^{\flat}\(\mathcal{K}_t^m\) = \sum_{T_t^m x = x} \frac{\exp\( g_{m,t}\(x\)\)}{1 - \(\(T_t^m\)'\(x\)\)^{-1}}.
\end{equation}
\end{lm}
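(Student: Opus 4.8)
The plan is to compute the regularized traces $\tr_\epsilon\(\mathcal{K}_t^m\)$ explicitly and let $\epsilon$ tend to $0$. Fix a compact $K\subseteq U$ with $t\in K$, so that the constructions of \S\ref{3} apply. Since $S\circ P=\mathrm{Id}$, we have $\mathcal{K}_t^m=P\circ\L_t^m\circ S$, where $\L_t^m$ is the transfer operator of $T_t^m$ with weight $e^{g_{m,t}}$ (recall \eqref{gmt}). Decomposing $\L_t^m$ along the inverse branches of $T_t^m$ exactly as in the proof of Proposition \ref{fonda} (using a partition of unity subordinate to a partition $\tilde\alpha_{m,t}$ of $S^1$ on each piece of which $T_t^m$ is injective) and writing $\mathcal{K}_t^m$ as a matrix $\(A_{\omega,\omega'}\)_{\omega,\omega'\in\Omega}$ of operators on $H^s$, each diagonal block $A_{\omega,\omega}$ becomes a finite sum of operators $\M_b\phi=w_b\cdot\(\phi\circ F_b\)$, where $w_b$ is $\mathcal{C}^r$ with support in a fixed bounded interval and $F_b$ is a $\mathcal{C}^{r+1}$ diffeomorphism of $\R$ that is a global contraction (being the inverse of a lift of $T_t^m$, it satisfies $\sup_\R\left|F_b'\right|\leqslant\lambda^m<1$; compare \eqref{loc}). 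By Corollary \ref{borne} each $\M_b$ is bounded on $H^s$, and since $\tr_\epsilon\(\mathcal{K}_t^m\)=\sum_\omega\tr_\epsilon\(A_{\omega,\omega}\)$ (see \S\ref{4}), it is enough to show that each $\M_b$ has a flat trace and to evaluate $\sum_\omega\sum_b\tr^\flat\(\M_b\)$.

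The core step is the following: if $w:\R\to\R$ is $\mathcal{C}^r$ with compact support, $F:\R\to\R$ is a $\mathcal{C}^{r+1}$ diffeomorphism with $\sup_\R\left|F'\right|<1$, and $\M\phi=w\cdot\(\phi\circ F\)$, then $F$ has a unique fixed point $x_F$, one has $1-F'\(x_F\)>0$, and
\[
\tr^\flat\(\M\)=\frac{w\(x_F\)}{1-F'\(x_F\)}
.\]
Indeed, with the notation of \S\ref{4} the kernel of $J_\epsilon$ is $\(x,y\)\mapsto\rho_\epsilon\(x-y\)\chi_\epsilon\(y\)$, so $\M\circ J_\epsilon$ has the continuous, compactly supported kernel $\(x,y\)\mapsto w\(x\)\rho_\epsilon\(F\(x\)-y\)\chi_\epsilon\(y\)$; hence $\M\circ J_\epsilon$ is nuclear on $H^s$ and, as $H^s$ has the approximation property, its trace is obtained by integrating the kernel on the diagonal:
\[
\tr_\epsilon\(\M\)=\tr\(\M\circ J_\epsilon\)=\s{\R}{w\(x\)\rho_\epsilon\(F\(x\)-x\)\chi_\epsilon\(x\)}{x}
.\]
Since $\left|F'\right|<1$, the map $h:x\mapsto F\(x\)-x$ is a $\mathcal{C}^{r+1}$ diffeomorphism of $\R$ with $h\(x_F\)=0$ and $h'=F'-1$; the substitution $u=h\(x\)$ rewrites this integral as $\s{\R}{\frac{w\(h^{-1}\(u\)\)\,\chi_\epsilon\(h^{-1}\(u\)\)}{1-F'\(h^{-1}\(u\)\)}\,\rho_\epsilon\(u\)}{u}$, which, $\rho_\epsilon$ being an approximate identity and $\chi_\epsilon$ tending to $1$ locally uniformly (the ingredients of the proof of Lemma \ref{convolution}), tends to $w\(x_F\)\big/\(1-F'\(x_F\)\)$ as $\epsilon\to0$. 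In particular the flat trace of each $\M_b$ exists.

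It remains to reassemble. For a branch $b$ of $A_{\omega,\omega}$, applying $\pi$ to $F_b\(x_{F_b}\)=x_{F_b}$ and using $\pi\circ\kappa_\omega=\mathrm{Id}$ shows that $\xi_b:=\pi\(x_{F_b}\)$ is a periodic point, $T_t^m\xi_b=\xi_b$; differentiating the inverse-branch expression for $F_b$ (cf. \eqref{loc}) and cancelling the chart factors via $\pi\circ\kappa_\omega=\mathrm{Id}$ gives $F_b'\(x_{F_b}\)=\(\(T_t^m\)'\(\xi_b\)\)^{-1}$, while evaluating the weight in \eqref{loc} at $x_{F_b}$ gives $w_b\(x_{F_b}\)=\theta_\omega\(\xi_b\)\,\chi_{\overrightarrow\omega,t}\(\xi_b\)\,e^{g_{m,t}\(\xi_b\)}$ (the cut-off $h_\omega$ equals $1$ there, being $\equiv 1$ on $\kappa_\omega\(\mathrm{supp}\,\theta_\omega\)$; the weight is $0$, and the branch contributes nothing, when $x_{F_b}\notin\mathrm{supp}\,w_b$, i.e. when the branch carries no periodic point). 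Summing $\tr^\flat\(\mathcal{K}_t^m\)=\sum_\omega\sum_b w_b\(x_{F_b}\)\big/\(1-F_b'\(x_{F_b}\)\)$ and grouping the terms by the periodic point $\xi$ with $T_t^m\xi=\xi$, the partition-of-unity identities $\sum_\omega\theta_\omega=1$ and $\sum_{\overrightarrow\omega\in I}\chi_{\overrightarrow\omega,t}=1$ (from the proof of Proposition \ref{fonda}) collapse all the weight factors and yield exactly \eqref{trace}.

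The delicate point is the core step: one must carefully justify that the trace of the nuclear operator $\M\circ J_\epsilon$ is given by the diagonal of its kernel, and then control the only weakly convergent factor $\chi_\epsilon$ inside the integral as $\epsilon\to0$. The reduction to single-branch operators and the final combinatorial bookkeeping are routine, the latter being already essentially contained in the proof of Proposition \ref{fonda}.
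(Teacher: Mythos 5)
Your proof is correct and follows essentially the same strategy as the paper: decompose $\mathcal{K}_t^m$ into single-branch operators $\M_b\phi=w_b\cdot(\phi\circ F_b)$, compute $\tr_\epsilon(\M_b\circ J_\epsilon)$ as an integral against $\rho_\epsilon(F(x)-x)\chi_\epsilon(x)$, perform the change of variables $u=F(x)-x$, pass to the limit, then reassemble via the partition-of-unity identities $\sum_\omega\theta_\omega=1$ and $\sum_{\overrightarrow\omega\in I}\chi_{\overrightarrow\omega,t}=1$. The one step you flag as delicate (that the trace of $\M\circ J_\epsilon$ on $H^s$ equals the diagonal integral of its Schwartz kernel) is handled slightly more explicitly in the paper by writing $\M\circ J_\epsilon$ as the absolutely convergent Bochner integral $\frac{1}{(2\pi)^2}\int_\R\(\delta_y\circ\op(a)\)\otimes V(\cdot,y)\,\mathrm{d}y$ of rank-one operators, which makes the diagonal formula immediate; your appeal to the approximation property alone would need to be supplemented by this kind of factorization through $\mathcal{S}$, but the idea is the same.
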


\begin{proof}
As in the proof of Proposition \ref{fonda}, write
\begin{equation*}
\mathcal{K}_t^m = \sum_{\overrightarrow{\omega} \in I} \mathcal{K}^m_{\overrightarrow{\omega},t}
.\end{equation*}
Choose $\overrightarrow{\omega} \in I$ and write $\mathcal{K}_{\overrightarrow{\omega},t}^m$ as a matrix of operators $\(A_{\omega,\omega',t}\)_{\omega,\omega' \in \Omega}$. For all $\omega \in \Omega$, $\phi \in H^s$ and $x \in \R$ we can write thanks to \eqref{loc}
\[
\(A_{\omega,\omega,t} \circ J_\epsilon\)\phi \(x\) = \s{\R}{h_\omega\(x\)\(\chi_{\overrightarrow{\omega},t} e^{g_{m,t}} \theta_\omega\) \circ \pi \circ F_t\(x\) \rho_\epsilon\(F_t\(x\)-y\) \chi_\epsilon\(y\) \phi\(y\)}{y}
,\]
which expresses $A_{\omega,\omega,t}$ as an integral of nuclear operators.
Thus
\[
\textrm{tr}_\epsilon A_{\omega,\omega,t} = \s{\R}{h_\omega\(x\)\(\chi_{\overrightarrow{\omega},t} e^{g_{m,t}} \theta_\omega\) \circ \pi \circ F_t\(x\) \rho_\epsilon\(F_t\(x\)-x\) \chi_\epsilon\(x\)}{x}
.\]
Since $F_t$ has its derivative bounded by $\lambda^m <1$, the map $x \mapsto F_t\(x\)-x$ is a diffeomorphism from $\R$ to itself, let $G$ be its inverse. Denote by $x^*$ the unique fixed point of $F_t$ and perform the change of variables "$u = x^* -F_t\(x\) +x$" to get
\begin{align*}
\textrm{tr}_\epsilon A_{\omega,\omega,t} = \s{\R}{h_\omega \circ G \(x^*-u\) \(\chi_{\overrightarrow{\omega},t} e^{g_{m,t}} \theta_\omega\) \circ \pi \circ F_t \circ G\(x^*-u\) \rho_\epsilon\(x^*-u\) \frac{\chi_\epsilon \circ G\(x^*-u\)}{1-F_t'\(G\(x^*-u\) \)}}{x} \\ \underset{ \epsilon \to 0}{\to} \textrm{tr}^{\flat} A_{\omega,\omega,t} =  \frac{h_\omega\(x^* \) \(\chi_{\overrightarrow{\omega},t} e^{g_{m,t}} \theta_\omega\) \circ \pi \(x^*\)}{1-F_t'\(x^*\)}.
\end{align*}
Recall that $F_t$ is the inverse of a lift of $T_t^m$ that extends $\kappa_\omega \circ \(\left. T_t^m \right|_{W_{\overrightarrow{\omega},t}}\) \circ \left. \pi \right|_{\kappa_\omega\(V_\omega\)}$, thus $\pi\(x^*\)$ is a fixed point of $T_t^m$. Since $T_t^m$ induces an expanding diffeomorphism from $W_{\overrightarrow{\omega},t}$ to $T_t^m\( W_{\overrightarrow{\omega},t}\)$, it has at most one fixed point in $W_{\overrightarrow{\omega},t}$. If $y$ is such a fixed point and $y \in V_\omega$ then $\kappa_\omega \(y\)$ is a fixed point of $F_t$, thus $y = \pi\(x^*\)$ and
\[
\textrm{tr}^{\flat} A_{\omega,\omega,t} = \frac{\chi_{\overrightarrow{\omega},t}\(y\) e^{g_{m,t}\(y\)} \theta_\omega\(y\)}{1-\(T_t^m\)'\(y\)^{-1}}
.\]
Otherwise, $h_\omega\(x^*\)= 0$ or $\chi_{\overrightarrow{\omega},t} \circ \pi \(x^*\) = 0$ and $ \textrm{tr}^{\flat} A_{\omega,\omega,t} = 0$. Finally, we always have
\[
\textrm{tr}^{\flat} A_{\omega,\omega,t} = \sum_{T_t^m\(x\) = x} \frac{\chi_{\overrightarrow{\omega},t}\(x\) e^{g_{m,t}\(x\)} \theta_\omega\(x\)}{1-\(T_t^m\)'\(x\)^{-1}}
.\]
Summing over $\omega \in \Omega$ and then $\overrightarrow{\omega} \in I$, we get
\[
\textrm{tr}^{\flat}\(\mathcal{K}_t^m\) = \sum_{T_t^m x = x} \frac{\exp\( g_{m,t}\(x\)\)}{1 - \(\(T_t^m\)'\(x\)\)^{-1}}
.\]
\end{proof}

Formula \eqref{trace} of Lemma \ref{ltrace} implies that the flat determinant of $\mathcal{K}_t$ is the dynamical determinant of Theorem \ref{main} (and is given by \eqref{determinant} in the application).

We want now to show that the product of "bounded" terms $\(\mathcal{K}_t^m\)_b$ of the decompositions \eqref{decomp} of large enough powers of $\mathcal{K}_t$ have almost no trace. That's the point of Lemma \ref{nullité}. To do that, we need first to state an abstract property of the flat trace. Notice that the convergence in weak operator topology is the convergence that appears in Lemma \ref{bor}.

\begin{lm}\label{sommation}
If $s \in \R$ and $\(u_k\)_{k \geqslant 0}$ is a sequence of bounded operators on $H^s$ such that the series $\sum_{k\geqslant 0} u_k$ converges in the weak operator topology. Then for all $\epsilon >0$ we have
\[
\textrm{tr}_\epsilon \( \sum_{k = 0}^{+ \infty} u_k \) = \sum_{k = 0}^{+ \infty} \textrm{tr}_\epsilon u_k
.\]
And the same is true replacing $H^s$ by $\B^s$.
\end{lm}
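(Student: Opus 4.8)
The plan is to reduce the statement about the flat trace $\tr_\epsilon$ to a statement about the ordinary trace of nuclear operators, using that $\tr_\epsilon A = \tr(A \circ J_\epsilon)$ where $J_\epsilon$ is a fixed nuclear operator (Lemma \ref{convolution}). First I would fix $\epsilon > 0$ and write $U = \sum_{k \geqslant 0} u_k$ (the limit in the weak operator topology), which is a bounded operator on $H^s$ by the uniform boundedness principle applied to the partial sums (a weakly convergent sequence of operators is bounded). Then $\tr_\epsilon U = \tr(U \circ J_\epsilon)$ and, for each $k$, $\tr_\epsilon u_k = \tr(u_k \circ J_\epsilon)$, so the claim amounts to
\[
\tr\left( \Big( \sum_{k \geqslant 0} u_k \Big) \circ J_\epsilon \right) = \sum_{k \geqslant 0} \tr\left( u_k \circ J_\epsilon \right).
\]
Since $J_\epsilon$ is nuclear of order $0$ on $H^s$, it can be written $J_\epsilon = \sum_{j} \lambda_j \, e_j' \otimes e_j$ with $\sum_j |\lambda_j| < \infty$, $\|e_j'\|_{(H^s)'} \leqslant 1$, $\|e_j\|_{H^s} \leqslant 1$; then for any bounded operator $A$ one has $\tr(A \circ J_\epsilon) = \sum_j \lambda_j \, e_j'(A e_j)$, a series that converges absolutely, with $|\tr(A \circ J_\epsilon)| \leqslant \|A\|_{H^s \to H^s} \sum_j |\lambda_j|$.

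The key step is then an interchange of summations. For the partial sums $S_n = \sum_{k=0}^{n} u_k$ we have $\tr(S_n \circ J_\epsilon) = \sum_{k=0}^n \tr(u_k \circ J_\epsilon) = \sum_{k=0}^n \sum_j \lambda_j e_j'(u_k e_j)$. As $n \to \infty$, $S_n \to U$ in the weak operator topology, so for each fixed $j$, $e_j'(S_n e_j) \to e_j'(U e_j)$; combined with the uniform bound $\sup_n \|S_n\|_{H^s \to H^s} < \infty$ and $\sum_j |\lambda_j| < \infty$, dominated convergence (on the index set of $j$) gives $\tr(S_n \circ J_\epsilon) \to \tr(U \circ J_\epsilon)$. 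On the other hand $\tr(S_n \circ J_\epsilon) = \sum_{k=0}^n \tr(u_k \circ J_\epsilon)$ is by definition the $n$-th partial sum of $\sum_k \tr_\epsilon u_k$, so that series converges and its sum equals $\tr_\epsilon U$, which is exactly the claim.

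For the $\B^s$ version I would simply invoke the block-matrix formula $\tr_\epsilon A = \sum_{\omega \in \Omega} \tr_\epsilon A_{\omega,\omega}$ established just before the lemma: writing $u_k$ as $(u_{k,\omega,\omega'})_{\omega,\omega'}$, weak operator convergence on $\B^s$ of $\sum_k u_k$ implies weak operator convergence on $H^s$ of each diagonal series $\sum_k u_{k,\omega,\omega}$, so the scalar case applies entrywise, and one sums the finitely many identities over $\omega \in \Omega$. The main obstacle is the justification of the interchange of the two sums (over $k$ and over $j$); this is a routine dominated-convergence argument once one has recorded that the partial sums $S_n$ are uniformly bounded in operator norm (Banach--Steinhaus) and that $J_\epsilon$ being nuclear of order $0$ makes the $j$-sum absolutely convergent uniformly over operators of bounded norm — there is no analytic subtlety beyond this, since $\epsilon$ is held fixed throughout and no limit in $\epsilon$ is taken.
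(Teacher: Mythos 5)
Your argument is correct and follows essentially the same route as the paper's proof: both write $J_\epsilon$ as an absolutely convergent sum of rank-one operators, invoke Banach--Steinhaus to bound the partial sums $S_n$ uniformly in operator norm, and conclude by dominated convergence over the rank-one index. The treatment of the $\B^s$ case via the diagonal block formula is also the intended reduction.
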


\begin{proof}
First, using Banach-Steinhauss Theorem twice, we find that there exists $M$ such that for all $n \in \N$, we have
\[
\left\| \sum_{k =0}^n u_k \right\|_{H^s \to H^s} \leqslant M
.\]

Then, write $J_\epsilon$ as a sum of rank one operators
\[
J_\epsilon = \sum_{m \geqslant 0} l_m \otimes x_m
\]
with $l_m \in \(H^{s}\)'$, $x_m \in H^s$ and
\[
\sum_{m \geqslant 0} \left\|l_m\right\|_{\(H^s\)'} \left\|x_m\right\|_{H^s} < + \infty
.\]
Thus for all $n \in \N$ we have
\begin{align*}
\sum_{k=0}^n \textrm{tr}_\epsilon u_k & = \sum_{k=0}^n \sum_{m \geqslant 0} l_m\(u_k\(x_m\)\) \\
 & = \sum_{m \geqslant 0} l_m\( \(\sum_{k=0}^n u_k \) \(x_m\)\).
\end{align*} 
For all $m \in \N$ and $n \in \N$ we have
\[
\left| l_m\( \(\sum_{k=0}^n u_k \) \(x_m\)\)\right| \leqslant M \left\|l_m\right\|_{\(H^s\)'}\left\|x_m\right\|_{H^s}
.\]
Thus by dominated convergence and convergence in the weak operator topology:
\[
\sum_{k=0}^{+ \infty} \textrm{tr}_\epsilon u_k = \sum_{m \geqslant 0} \lim_{n \to + \infty} l_m\( \(\sum_{k=0}^n u_k\) \(x_m\) \) = \sum_{m \geqslant 0} l_m\( \(\sum_{k=0}^{+ \infty} u_k\) \(x_m\)\) = \textrm{tr}_\epsilon \( \sum_{k=0}^{+ \infty} u_k\)
.\]
\end{proof}

\begin{lm}\label{nullité}
There is an integer $L$ such that if $m_1,\dots,m_J$ are integers greater than $L$ then for all $t \in K$
\[
\textrm{tr}^{\flat} \(\prod_{j=1}^J \(\mathcal{K}_t^{m_j}\)_b\) = 0
.\]
\end{lm}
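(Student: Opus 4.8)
The plan is to expand $\prod_{j=1}^J \(\mathcal{K}_t^{m_j}\)_b$ into elementary pieces coming from the Paley--Littlewood decomposition and to check that each such piece has vanishing $\textrm{tr}_\epsilon$ once $\epsilon$ is small enough, uniformly. Recall from the construction in the proof of Proposition \ref{fonda} that, for each $j$, the operator $\(\mathcal{K}_t^{m_j}\)_b$ is a finite sum of matrices whose entries are operators of the form $\op\(\psi_n\) \M \op\(\psi_l\)$, with $\M \phi = f.\(\phi \circ F\)$ for a smooth compactly supported $f$ and a diffeomorphism $F$, the couple $\(l,n\)$ being restricted to the set $\A$ attached to the constant $\Lambda = 2\(\inf \left| \(T_t^{m_j}\)'\right|\)^{-1}$; by \eqref{dilatation} and \eqref{maitrise} one has $\Lambda \leqslant 2 \lambda^{m_j}$. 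I would choose $L$ so that $2 \lambda^m < 2^{-9}$ whenever $m > L$. Since $\(l,n\) \in \A$ means $2^n \leqslant \Lambda 2^{l+6}$, every couple appearing in $\(\mathcal{K}_t^{m_j}\)_b$ then satisfies $n - l \leqslant 6 + \log_2 \Lambda < -3$, i.e. $n - l \leqslant -4$: each of these pieces strictly lowers the Paley--Littlewood frequency.

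Expanding $\prod_{j=1}^J \(\mathcal{K}_t^{m_j}\)_b$ and all the matrix products, one writes $\textrm{tr}_\epsilon$ of this product as a countable sum of terms $\textrm{tr}_\epsilon X$, where on the diagonal block indexed by $\omega$ a generic $X$ is
\[
X = \op\(\psi_{n_1}\) \M_1 \op\(\psi_{l_1}\) \op\(\psi_{n_2}\) \M_2 \op\(\psi_{l_2}\) \cdots \op\(\psi_{n_J}\) \M_J \op\(\psi_{l_J}\),
\]
with $n_j - l_j \leqslant -4$ for every $j$. The interchange of $\textrm{tr}_\epsilon$ with these sums is justified by Lemma \ref{sommation}, applied iteratively and noting that its proof works verbatim with any fixed nuclear operator in place of $J_\epsilon$ (the successive tails $\(\mathcal{K}_t^{m_{j+1}}\)_b \cdots \(\mathcal{K}_t^{m_J}\)_b \circ I_\epsilon$ are nuclear, each $\(\mathcal{K}_t^{m_j}\)_b$ being a finite sum of series converging in the weak operator topology by Lemma \ref{bor}). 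So it is enough to prove that every such $X$ satisfies $\textrm{tr}_\epsilon X = 0$ for all $\epsilon$ below a threshold independent of $X$.

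Fix such an $X$. If $\left| l_j - n_{j+1}\right| > 1$ for some $j$, then $\op\(\psi_{l_j}\) \op\(\psi_{n_{j+1}}\) = \op\(\psi_{l_j} \psi_{n_{j+1}}\) = 0$, so $X = 0$. Otherwise $l_j \leqslant n_{j+1} + 1$ for every $j$, which combined with $n_j \leqslant l_j - 4$ telescopes to $n_1 \leqslant l_J - 3J - 1$, so that $l_J \geqslant n_1 + 3$. Let $R > 0$ be such that $\textrm{supp } \rho \subseteq \left[ -R,R\right]$, so that $\widehat{\chi_\epsilon}$ is supported in $\left[-\epsilon R, \epsilon R\right]$; then $J_\epsilon \op\(\psi_{n_1}\) \phi$ has Fourier transform supported in $\set{ \left|\xi\right| \leqslant 2^{n_1 + 1} + \epsilon R}$, which for $\epsilon < 2/R$ is contained in $\set{\left|\xi\right| < 2^{n_1 + 2}}$ (as $n_1 \geqslant 0$) and hence disjoint from $\textrm{supp } \psi_{l_J} \subseteq \set{\left|\xi\right| \geqslant 2^{l_J - 1}} \subseteq \set{\left|\xi\right| \geqslant 2^{n_1 + 2}}$. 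Thus $\op\(\psi_{l_J}\) J_\epsilon \op\(\psi_{n_1}\) = 0$; since $J_\epsilon$ is nuclear while the $\M_j$ and the remaining $\op\(\psi_\cdot\)$ are bounded (Corollary \ref{borne}), cyclicity of the trace yields, for $\epsilon < 2/R$,
\[
\textrm{tr}_\epsilon X = \textrm{tr}\( X J_\epsilon\) = \textrm{tr}\( \op\(\psi_{l_J}\) J_\epsilon \op\(\psi_{n_1}\) \M_1 \op\(\psi_{l_1}\) \cdots \op\(\psi_{n_J}\) \M_J \) = 0.
\]
Summing over the elementary pieces and over $\omega$, we get $\textrm{tr}_\epsilon \( \prod_{j=1}^J \(\mathcal{K}_t^{m_j}\)_b\) = 0$ for every $\epsilon < 2/R$, and letting $\epsilon \to 0$ gives the claim; here $L$ and $2/R$ depend only on $\lambda$ and on the mollifier $\rho$, hence not on $t$ or on $m_1, \dots, m_J$.

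The mechanism is just that each block $\(\mathcal{K}_t^{m_j}\)_b$ contracts Paley--Littlewood frequencies by the factor $2^6 \Lambda < 1$, so after going once around the trace one cannot return to the starting frequency. The only genuinely delicate point is the bookkeeping of the expansion into elementary pieces through the matrix structure, the finite subcover sums and the partition-of-unity gluing of Proposition \ref{fonda}, together with the attendant check that Lemma \ref{sommation} (in the slightly extended form above) applies to the resulting countable sum.
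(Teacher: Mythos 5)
Your proof is correct and follows essentially the same route as the paper: the same expansion of the product into elementary pieces $\op\(\psi_{n_1}\)\M_1\op\(\psi_{l_1}\)\cdots\op\(\psi_{n_J}\)\M_J\op\(\psi_{l_J}\)$, the same frequency bookkeeping forcing $l_J$ to exceed $n_1$ by a definite gap when $L$ is large, and the same appeal to Lemma \ref{sommation} to interchange $\textrm{tr}_\epsilon$ with the sums. The only (harmless, arguably cleaner) deviation is at the last step: where the paper shows the composed operator has spectrum $\set{0}$ and invokes the trace--eigenvalue theorem for nuclear operators of order $0$, you conclude directly via cyclicity of the trace from $\op\(\psi_{l_J}\)J_\epsilon\op\(\psi_{n_1}\)=0$.
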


\begin{proof}
Write $\prod_{j=1}^J \(\mathcal{K}_t^{m_j}\)_b$ as a matrix $\(B_{\omega,\omega',t}\)_{\omega,\omega' \in \Omega}$. From the construction of the $\(\mathcal{K}_t^{m_j}\)_b$, it comes that, for all $\omega \in \Omega$, the operator $B_{\omega,\omega,t} $ can be written as a sum (in weak operator topology) of terms of the form
\begin{equation}\label{terme}
\op\(\psi_{n_1}\) \M_1 \op\(\psi_{l_1}\) \dots \op\(\psi_{n_J}\) \M_J \op\(\psi_{l_J}\) 
\end{equation}
with the $\M_j$ as in the first part and
\begin{equation}\label{recu}
2^{n_j} \leqslant \lambda^{m_j} 2^{l_j+6} \leqslant \lambda^L 2^{l_j+6}
\end{equation}
for all $j \in \set{1,\dots,J} $, which implies
\begin{equation*}
n_j \leqslant L \log_2 \lambda + l_j + 6
.\end{equation*}
We shall show that, provided $\epsilon$ is small enough and $L$ large enough, the "epsilon trace" of all these operators is zero, which ends the proof with Lemma \ref{sommation}.

Let $u$ be the operator defined by \eqref{terme} composed by $J_\epsilon$. If there is $j \in \set{1,\dots, J-1}$ such that $\psi_{l_j} \psi_{n_j} = 0$ then $u$ is zero and so is its trace. Otherwise, for all $j \in \set{1,\dots,J-1}$ we have
\[
l_j \leqslant n_{j+1}+1
,\]
which leads with \eqref{recu} to
\begin{equation*}
n_1 \leqslant J\(L \log_2 \lambda + 1\) + l_J + 6
.\end{equation*}
Thus, provided $L$ is large enough
\begin{equation}\label{loin}
n_1 +1 \leqslant l_J -1
.\end{equation}
Suppose that $\phi$ is an eigenvector of $u$ corresponding to a non-zero eigenvalue. Then $\widehat{\phi}$ is supported in $\textrm{ supp } \psi_{n_1}$. But, provided $\epsilon$ is small enough, $\rho_{\epsilon}$ is supported in $\left[-1,1\right]$ and thus $\widehat{J_\epsilon \phi} = \widehat{\rho_{\epsilon}} \(\rho_{\epsilon} \ast \widehat{\phi}\)$ is supported in $\left[ -2^{n_1} - 1; 2^{n_1}+1\right]$ that doesn't intersect $\textrm{ supp } \psi_{l_J}$ according to \eqref{loin}. Consequently, $\phi$ must be zero, which is absurd, and thus the spectrum of $u$ is $\set{0}$. Since $u$ is nuclear of order $0$ ($J_\epsilon$ is), its trace is zero (see for example corollary 4 page 18 of the second part of \cite{Groth}).
\end{proof}

\section{Linear response \textit{a priori}}\label{5}

In this section, we prove some properties of the spectrum of the transfer operator $\mathcal{K}_t$, using methods introduced by Gouëzel, Keller and Liverani in \cite{Ke-Li} and \cite{GLK} (see also Paragraph A.3 in \cite{Bal2} for a sum-up). Notice that the spaces $\B^s$ are not the same as those used, for instance, in \cite{Bal2}. In particular, there is no compact injection of $\B^s$ in $\B^{s'}$ when $s'<s$, which implies for instance that we cannot use Hennion's theorem. However, the results of \cite{Ke-Li} and \cite{GLK} do not require compact injection and thus we can prove that the spectrum of the transfer operator has the expected behaviour on the Banach spaces $\B^s$.

First, we recall that the transfer operator has a spectral gap. We shall denote by $P_t$ the topological pressure associated with the dynamics $T_t$ (see \S 3.4 of \cite{Ruelle2} for a definition).

\begin{lm}\label{trouspec}
For all $t \in K$ and all $\frac{3}{2} < s < r-1$, the real number $e^{P_{t}\(g_{t}\)}$ is an eigenvalue of $\mathcal{K}_{t} : H^s \to H^s$. Moreover, this eigenvalue is simple and there is $\eta <  e^{P_{t} \(g_{t}\)}$ that does not depend of $s$ (but may depend of $t$) such that $e^{P_{t} \(g_{t}\)} $ is the only eigenvalue of modulus larger than $\eta$ of $\mathcal{K}_t$ acting on $\B^s$ .
\end{lm}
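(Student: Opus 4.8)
The plan is to reduce the statement to the classical Ruelle--Perron--Frobenius theorem for the genuine transfer operator $\mathcal{L}_t$ on the circle, via the intertwining relations coming from $S\circ P=\mathrm{Id}$. (Proposition \ref{fonda} also gives quasi-compactness of $\mathcal{K}_t$ on $\B^s$, but this is not needed for the present statement.) The starting point is the elementary algebra behind $S\circ P=\mathrm{Id}$: it yields at once $S\mathcal{K}_t=\mathcal{L}_tS$, $\mathcal{K}_tP=P\mathcal{L}_t$ and $\mathcal{K}_t^n=P\mathcal{L}_t^nS$, identities that hold on $\bigoplus_{\omega\in\Omega}\mathcal{S}$ and hence, by density, on the relevant Banach spaces. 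Since $3/2<s<r-1$, the Sobolev embedding $H^s(\R)\hookrightarrow\mathcal{C}^1(\R)$ shows that $S$ extends to a bounded operator $\B^s\to\mathcal{C}^1(S^1)$, while, $s$ being $<r$, the operator $P$ restricts to a bounded operator $\mathcal{C}^r(S^1)\to\B^s$. Moreover $PS$ is a bounded projection of $\B^s$ with range $\mathrm{Ran}(P)$ and kernel $\ker(S)$, so $\B^s=\mathrm{Ran}(P)\oplus\ker(S)$; the operator $\mathcal{K}_t$ preserves $\mathrm{Ran}(P)$ and annihilates $\ker(S)$, and $S$ is injective on $\mathrm{Ran}(P)$. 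With respect to this splitting $\mathcal{K}_t$ is therefore block upper triangular with second diagonal block $0$, so for any $\mu\neq0$ every $v$ with $(\mathcal{K}_t-\mu)^kv=0$ lies in $\mathrm{Ran}(P)$; then $Sv\neq0$ and $(\mathcal{L}_t-\mu)^k(Sv)=S(\mathcal{K}_t-\mu)^kv=0$, so $S$ embeds, injectively, the generalized eigenspace of $\mathcal{K}_t$ on $\B^s$ at $\mu$ into that of $\mathcal{L}_t$ on $\mathcal{C}^1(S^1)$ at $\mu$.

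Now I would invoke the classical Ruelle--Perron--Frobenius theorem. Each $T_t$ is a $\mathcal{C}^{r+1}$ (in particular $\mathcal{C}^2$) expanding map of $S^1$, hence topologically mixing, and $g_t\in\mathcal{C}^r\subset\mathrm{Lip}$; therefore $\mathcal{L}_t$ acting on $\mathcal{C}^1(S^1)$ has $e^{P_t(g_t)}$ as a simple eigenvalue, with a strictly positive eigenfunction $\rho_t\in\mathcal{C}^r(S^1)$, and the remainder of its spectrum lies in a closed disc of radius $\eta<e^{P_t(g_t)}$, where $\eta$ and $\rho_t$ may depend on $t$ but not on $s$. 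Combining with the previous paragraph: on the one hand $P\rho_t\in\B^s$ is nonzero (since $SP\rho_t=\rho_t$) and satisfies $\mathcal{K}_t(P\rho_t)=P\mathcal{L}_t\rho_t=e^{P_t(g_t)}P\rho_t$, so $e^{P_t(g_t)}$ is an eigenvalue of $\mathcal{K}_t$ on $\B^s$; on the other hand, if $\mu$ is an eigenvalue of $\mathcal{K}_t$ on $\B^s$ with $|\mu|>\eta$, then $\mu\neq0$, so $\mu$ is an eigenvalue of $\mathcal{L}_t$ on $\mathcal{C}^1(S^1)$ of the same modulus, forcing $\mu=e^{P_t(g_t)}$, and the generalized eigenspace of $\mathcal{K}_t$ at $e^{P_t(g_t)}$ embeds into the one-dimensional generalized eigenspace of $\mathcal{L}_t$ on $\mathcal{C}^1(S^1)$, hence is one-dimensional. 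This is exactly the claim.

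The only genuine work is the bookkeeping of the intertwining: checking that $S$ and $P$ map between the right function spaces in the range $3/2<s<r-1$, that generalized eigenvectors attached to nonzero eigenvalues live in $\mathrm{Ran}(P)$, and that $S$ is injective there and intertwines $\mathcal{K}_t$ with $\mathcal{L}_t$. The Ruelle--Perron--Frobenius input is entirely standard.
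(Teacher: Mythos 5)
Your proposal is correct and follows essentially the same route as the paper: push (generalized) eigenvectors for nonzero eigenvalues forward with $S$ into a classical function space where the Ruelle--Perron--Frobenius theorem applies to $\mathcal{L}_t$ (the paper uses $\mathcal{C}^{1/2}$, you use $\mathcal{C}^1$), and pull the $\mathcal{C}^r$ RPF eigenfunction back with $P$ to exhibit $e^{P_t(g_t)}$ as an eigenvalue of $\mathcal{K}_t$ on $\B^s$. Your extra bookkeeping with the splitting $\B^s=\mathrm{Ran}(PS)\oplus\ker S$ is a welcome refinement that justifies the statement about algebraic (not just geometric) simplicity, which the paper's two-line proof leaves implicit.
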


\begin{proof}
Just notice that $S$ maps $\B^s$ in $\mathcal{C}^{\frac{1}{2}}$, and so if $\phi$ is an eigenvector of $\mathcal{K}_t$ for a non-zero eigenvalue then $S \phi$ is an eigenvector of $\mathcal{L}_t$ acting on $\mathcal{C}^{\frac{1}{2}}$ for the same eigenvalue. Next, note that the lemma is true while replacing $\mathcal{K}_t$ by $\mathcal{L}_t$ and $\B^s$ by $\mathcal{C}^{\frac{1}{2}}$ or $\mathcal{C}^r$ (see for instance Theorem 3.6 of \cite{Ruelle3}).
Reciprocally, we know that $e^{P_t\(g_t\)}$ is an eigenvalue of $\mathcal{L}_t$ acting on $\mathcal{C}^r$. Let $\phi$ be a corresponding eigenvector. Then the coordinates of $P \phi$ are $\mathcal{C}^r$ and compactly supported, thus $P \phi \in \B^s$, and $P \phi$ is an eigenvector of $\mathcal{K}_t$ for the eigenvalue $e^{P_t\(g_t\)}$. 
\end{proof}

We need some technical estimates to apply the results from \cite{Ke-Li} and \cite{GLK}.

\begin{lm}\label{sousmult}
Let $ e^{P_0\(g_0\)} < R $. There is a constant $C>0$ such that for all $t \in K$ sufficiently close to $0$ and all $m \in \N^*$ we have
\begin{equation*}
Z\(t,m\) = \inf_{\beta \textup{ subcover of } \alpha_{m,t}} \sum_{V \in \beta} \exp \( \sup_{V} g_{m,t}\) \leqslant C R^m
.\end{equation*} 
\end{lm}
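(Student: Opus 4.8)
The plan is to show that the quantity $Z(t,m)$ is submultiplicative in $m$ (up to a uniform constant) and that its exponential growth rate is governed by the topological pressure, which varies continuously in $t$. First I would establish, for each fixed $t\in\tilde K$, the almost-submultiplicativity
\[
Z(t,m+m') \leqslant C_0 \, Z(t,m)\, Z(t,m'),
\]
where $C_0$ depends only on the distortion constant $M$ from \eqref{distortion} and on the combinatorics of the cover $\alpha$. Indeed, given subcovers $\beta\in\alpha_{m,t}$ and $\beta'\in\alpha_{m',t}$ realizing (or nearly realizing) the two infima, the family $\{V\cap T_t^{-m}V' : V\in\beta, V'\in\beta'\}$ is a subcover of $\alpha_{m+m',t}$, and the cocycle relation $g_{m+m',t}(x) = g_{m,t}(x) + g_{m',t}(T_t^m x)$ together with \eqref{distortion} gives $\sup_{V\cap T_t^{-m}V'} g_{m+m',t} \leqslant \sup_V g_{m,t} + \sup_{V'} g_{m',t} + M$ (the $e^M$ absorbing the fact that $\sup$ on $T_t^m(V\cap T_t^{-m}V')$ differs from $\sup$ on $V'$ by bounded distortion). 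Summing over the product family and using $\sum_{V,V'}\leqslant(\sum_V)(\sum_{V'})$ yields the claim with $C_0 = e^M$.

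Next I would identify $\lim_{m\to\infty}\frac1m\log Z(t,m)$ with $P_t(g_t)$. By Fekete's lemma applied to $\log(C_0 Z(t,m))$, the limit exists and equals $\inf_m \frac1m\log(C_0 Z(t,m))$; in particular $Z(t,m)\leqslant C_0^{-1}e^{mP_t(g_t)}\cdot C_0 = e^{mP_t(g_t)}$ would follow once the limit is pinned down — but more carefully the infimum characterization gives $Z(t,m)\leqslant C_1 e^{m\,\ell(t)}$ for any $\ell(t)>\lim\frac1m\log Z(t,m)$ with $C_1$ depending on how close $\ell(t)$ is to the limit. The value of the limit is the standard variational/partition-function characterization of pressure: $Z(t,m)$ is comparable (again via bounded distortion \eqref{distortion} and the fact that the elements of $\alpha_{m,t}$ shrink by property 3 of the cover) to the Birkhoff partition sum $\sum_{T_t^m x=x}e^{g_{m,t}(x)}$ and to $\sup_{x}\sum_{T_t^m y = x}e^{g_{m,t}(y)}$, each of which has exponential rate $P_t(g_t)$ (see \S 3.4 of \cite{Ruelle2}).

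Finally I would handle uniformity in $t$. Fix $R$ with $e^{P_0(g_0)}<R$. Since $t\mapsto P_t(g_t)$ is continuous (pressure depends continuously on the dynamics and potential in the $\mathcal{C}^1$, resp. $\mathcal{C}^r$, topology, and $t\mapsto(T_t,g_t)$ is continuous by hypothesis), there is a neighbourhood of $0$ on which $e^{P_t(g_t)}<R^{1/2}\cdot e^{P_0(g_0)/2}<\sqrt{R\cdot e^{P_0(g_0)}}=:R'<R$, say $e^{P_t(g_t)}\leqslant R'$. Then using the Fekete infimum bound there is, for each such $t$, an $m_0=m_0(t)$ with $C_0 Z(t,m_0)\leqslant (R')^{m_0}$, hence by submultiplicativity $Z(t,m)\leqslant C_0^{-1}\big(C_0 Z(t,m_0)\big)^{\lfloor m/m_0\rfloor}\big(\text{boundary terms}\big)\leqslant C (R')^m\leqslant C R^m$ for all $m$, with $C$ absorbing the finitely many small-$m$ terms and the boundary remainder — and this is uniform if one can take $m_0$ uniform. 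To get a uniform $m_0$: by compactness of $K$ and an open-cover argument, plus the fact that $(t,m)\mapsto Z(t,m)$ depends continuously on $t$ for each fixed $m$ (the suprema $\sup_V g_{m,t}$ vary continuously as the intervals $V_{\overrightarrow\omega,t}$ deform continuously), one fixed large $m_0$ works on all of $K$ once we only demanded $e^{P_t(g_t)}\leqslant R'$ with room to spare; continuity of $Z(\cdot,m_0)$ then gives $Z(t,m_0)\leqslant C_0^{-1}(R')^{m_0}$ uniformly on a neighbourhood of $0$ in $K$.

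The main obstacle I expect is the last step: converting the $t$-by-$t$ exponential-rate bound into a genuinely \emph{uniform} estimate with a single constant $C$ and a single threshold $m_0$. This requires carefully combining (i) continuity of the pressure, (ii) the uniform distortion constant $M$ from \eqref{distortion} (valid on all of $\tilde K$), (iii) continuity of each fixed-$m$ partition sum in $t$, and (iv) compactness, and then running the submultiplicativity iteration while keeping track that all the implied constants ($C_0$, the boundary terms, the gap $R-R'$) are $t$-independent. The comparison of $Z(t,m)$ with the classical partition function defining the pressure is routine given \eqref{distortion} and the shrinking property of $\alpha_{m,t}$, but it must be done with constants independent of $t$, which is exactly where the uniform hypotheses on the cover (the properties 1--5, holding for all $t\in\tilde K$) and the uniform distortion bound are used.
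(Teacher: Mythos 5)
Your overall strategy matches the paper's: establish (near-)submultiplicativity of $Z(t,\cdot)$, use the infimum characterization $e^{P_t(g_t)}=\inf_m Z(t,m)^{1/m}$, then make the bound uniform for $t$ near $0$ by iterating from a single threshold $m_0$. Two specific points are worth flagging, though.

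First, the $e^M$ in your submultiplicativity is unnecessary. You worry that ``$\sup$ on $T_t^m(V\cap T_t^{-m}V')$ differs from $\sup$ on $V'$ by bounded distortion,'' but in fact the inclusion $T_t^m(V\cap T_t^{-m}V')\subseteq V'$ gives $\sup_{V\cap T_t^{-m}V'}(g_{m',t}\circ T_t^m)\leqslant\sup_{V'}g_{m',t}$ exactly. So $Z(t,m+m')\leqslant Z(t,m)Z(t,m')$ holds with constant $1$, and Fekete's lemma applies directly, with no distortion input at all. Your version still works, but it obscures the fact that this step is purely combinatorial.

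Second, and more substantively, you route the uniformity step through continuity of the map $t\mapsto P_t(g_t)$. That is a correct but heavier tool than necessary, and within the logic of this paper it is in danger of circularity: the regularity of the pressure is only established later (Lemma~\ref{rmoinsq}, whose proof ultimately relies on the present lemma via Lemma~\ref{toujours}). The paper instead observes that for a \emph{fixed} $m_0$ the quantity $Z(\cdot,m_0)$ is upper semi-continuous in $t$ --- an elementary consequence of the Lebesgue-number argument on the compact $S^1$: a finite open subcover that works at $t$ still covers for $t'$ near $t$, and each $\sup_{V_{\overrightarrow\omega,t}}g_{m,t}$ varies continuously. From $e^{P_0(g_0)}=\inf_m Z(0,m)^{1/m}<R$ one picks a single $m_0$ with $Z(0,m_0)<R^{m_0}$, then upper semi-continuity gives $Z(t,m_0)<R^{m_0}$ for $t$ near $0$, and submultiplicativity plus the (trivially finite) $\sup_{0\leqslant r<m_0}\|Z(\cdot,r)\|_{\infty,K}$ finishes with a uniform $C$. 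This sidesteps the continuity-of-pressure lemma entirely; I would recommend replacing your appeal to pressure continuity with this upper-semicontinuity observation.
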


\begin{proof}
We know that $e^{P_0\(g_0\)} = \inf_{m \in \N^*} Z\(0,m\)^{\frac{1}{m}}$. Thus there exists $m_0 \in \N^*$ such that $Z\(0,m_0\) < R^{m_0}$. Since $Z\(.,m_0\)$ is upper semi-continuous, this inequality still holds replacing $0$ by $t$ sufficiently close. For such a $t$, we can write for $m = q m_0 + r $:
\[
Z\(t,m\) \leqslant Z\(t,m_0\)^q Z\(t,r\) \leqslant R^{q m_0} \sup_{k=0,\dots,q-1} \left\|Z\(.,k\)\right\|_{\infty,K} \leqslant C R^{m}
.\]
\end{proof}

\begin{lm}\label{toujours}
For each $\rho > \lambda e^{P_0\(g_0\)}$ there exists a neighbourhood $W$ of $0$ in $K$ such that for all $s,s' \in \left] \frac{3}{2}, r-1\right[$, there are some constants $C_1, C_2,M$ such that for all $m \in \N^*$ , $t \in W$ and $\phi \in \B^s$ we have
\begin{equation*}
\left\|\mathcal{K}_t^m \phi \right\|_{\B^s} \leqslant C_1 \rho^m \left\|\phi\right\|_{\B^s} + C_2 M^m \left\|\phi\right\|_{\B^{s'}}
.\end{equation*}
\end{lm}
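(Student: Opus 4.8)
The plan is to combine the decomposition $\mathcal{K}_t^m = (\mathcal{K}_t^m)_b + (\mathcal{K}_t^m)_c$ of Proposition \ref{fonda} with the triangle inequality: the bounded half will supply the term $C_1\rho^m\|\phi\|_{\B^s}$, via the quantitative bound \eqref{bounded} and the subexponential growth of $Z(t,m)$ from Lemma \ref{sousmult}; the nuclear half, being regularizing, will factor through the weaker norm and supply the term $C_2 M^m\|\phi\|_{\B^{s'}}$. To begin, given $\rho > \lambda e^{P_0(g_0)}$ I would fix a real $R$ with $e^{P_0(g_0)} < R < \rho/\lambda$ and let $W$ be the neighbourhood of $0$ in $K$ produced by Lemma \ref{sousmult}, so that $Z(t,m) \leqslant C R^m$ for all $t\in W$ and $m\in\N^*$; since $R$ depends only on $\rho$ and $\lambda$, so does $W$, as required. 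Now fix $s,s'\in\left]\tfrac32,r-1\right[$.

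For the bounded part, \eqref{bounded} (applicable since $\tfrac12<s<r$) gives $\|(\mathcal{K}_t^m)_b\|_{\L(\B^s,\B^s)}\leqslant c\,\lambda^{m(s-1/2)}Z(t,m)$. As $s-\tfrac12>1$ and $0<\lambda<1$, one has $\lambda^{s-1/2}\leqslant\lambda$, so for $t\in W$ this is at most $cC(\lambda R)^m\leqslant cC\rho^m$ by the choice of $R$; hence $\|(\mathcal{K}_t^m)_b\phi\|_{\B^s}\leqslant C_1\rho^m\|\phi\|_{\B^s}$ with $C_1 = cC$.

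For the nuclear part, reading Proposition \ref{fonda}(iii) with its two indices exchanged — that is, as a statement about $(\mathcal{K}_t^m)_c\in\L_{nuc}(\B^{s'},\B^s)$, which is allowed because $s'>\tfrac32$ and $s<r-1$ — the operator $(\mathcal{K}_t^m)_c$ is bounded from $\B^{s'}$ to $\B^s$, so $\|(\mathcal{K}_t^m)_c\phi\|_{\B^s}\leqslant\|(\mathcal{K}_t^m)_c\|_{\L(\B^{s'},\B^s)}\|\phi\|_{\B^{s'}}$. The remaining step — and the one I expect to be genuinely laborious, since Proposition \ref{fonda} gives no control of this norm as $m\to\infty$ — is to prove $\|(\mathcal{K}_t^m)_c\|_{\L(\B^{s'},\B^s)}\leqslant C_2 M^m$ on $W$ for some $M$. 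I would obtain this by re-entering the proof of Proposition \ref{fonda}: locally near each point of $K$, $(\mathcal{K}_t^m)_c$ is a finite sum over $\overrightarrow\omega\in I\subseteq\Omega^m$ of nuclear parts of matrices $\big((A_{\omega,\omega',t})_c\big)_{\omega,\omega'}$ coming, through Lemma \ref{decloc}, from Lemma \ref{nuc} applied to an inverse branch $F_t$ of $T_t^m$ (with derivative at most $\Lambda\leqslant 2\lambda^m$ by \eqref{dilatation}) and to the weight $f_t = h_\omega\big(\chi_{\overrightarrow\omega,t}\,e^{g_{m,t}}\,\theta_{\omega'}\big)\circ\pi\circ F_t$. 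The crucial observation is that all quantities entering Lemma \ref{nuc} grow at most geometrically in $m$: writing each $T_t^i\circ F_t$ as an inverse branch of $T_t^{m-i}$, a standard bounded-distortion estimate (the logarithmic derivatives of $(T_t^m)'$ telescoping with geometric decay) shows that the $\mathcal{C}^{r+1}$ norm of $F_t$ is bounded uniformly in $m$ and that $\|f_t\|_{\mathcal{C}^r}\leqslant C^m\exp\big(\sup_{W_{\overrightarrow\omega,t}}g_{m,t}\big)$; since the bound $b_{n,l}$ of Lemma \ref{nuc} is linear in $\|f_t\|_{\mathcal{C}^r}$ and only polynomial in $\Lambda^{-1}$ (itself $\leqslant C^m$) and in $\|F_t\|_{\mathcal{C}^{r+1}}$, and $\sum_{l\nhra n}b_{n,l}$ converges (because $r-1-s>0$), each $(A_{\omega,\omega',t})_c$ has nuclear norm $H^{s'}\to H^s$ at most $C^m\exp\big(\sup_{W_{\overrightarrow\omega,t}}g_{m,t}\big)$. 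Summing over the $(\#\Omega)^2$ pairs $(\omega,\omega')$ and the at most $(\#\Omega)^m$ indices $\overrightarrow\omega$, and gluing the finitely many local decompositions by a partition of unity (whose members lie in $[0,1]$ and sum to $1$, so the gluing costs nothing even if the number of charts depends on $m$), I obtain $\|(\mathcal{K}_t^m)_c\|_{\L_{nuc}(\B^{s'},\B^s)}\leqslant C_2 M^m$ for all $t\in W$ and $m\in\N^*$, hence the same bound for the operator norm. (Using \eqref{ruse} together with Lemma \ref{sousmult} one could in fact absorb the sum over $\overrightarrow\omega$ into $Z(t,m)\leqslant CR^m$ and take $M$ arbitrarily close to $e^{P_0(g_0)}$, but the crude bound already suffices.)

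Adding the two estimates yields $\|\mathcal{K}_t^m\phi\|_{\B^s}\leqslant C_1\rho^m\|\phi\|_{\B^s} + C_2 M^m\|\phi\|_{\B^{s'}}$ for all $t\in W$, $m\in\N^*$ and $\phi\in\B^{s'}$; when $\phi\in\B^s\setminus\B^{s'}$ (possible only if $s'>s$) the inequality holds trivially since its right-hand side is infinite. The single real difficulty is the geometric-in-$m$ control of the nuclear part described above — essentially a bookkeeping exercise inside the proof of Proposition \ref{fonda} — whereas the bounded part and the final assembly are immediate from the results already in hand.
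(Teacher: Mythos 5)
Your overall plan — split $\mathcal{K}_t^m$ into the bounded and nuclear halves and estimate each — is the natural one, and your treatment of the bounded half via \eqref{bounded} and Lemma~\ref{sousmult} is exactly right. But the paper takes a genuinely different (and much cheaper) route to the second term, and the difference is worth understanding: you attempt to prove that $\bigl\|(\mathcal{K}_t^m)_c\bigr\|_{\L(\B^{s'},\B^s)}$ grows at most geometrically in $m$, which, as you yourself note, is not supplied by Proposition~\ref{fonda} and would require re-opening the proofs of Lemmas~\ref{decloc} and~\ref{nuc} with quantitative control of the $\mathcal{C}^{r+1}$ norms of the inverse branches, the $\mathcal{C}^r$ norms of the weights $f_t$, the number of charts, and the $m$-dependent partition of unity used to glue the local decompositions over the cover $(U_{t,m})_{t\in K}$. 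Your sketch of these estimates is plausible (bounded distortion should indeed tame the derivatives of $F_t$ and of $g_{m,t}\circ\pi\circ F_t$), but it is a real piece of work that you leave undone.

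The paper sidesteps this entirely by never using the nuclear piece at the variable exponent $m$. Instead it fixes $L$ once (large enough that the $\B^\sigma\to\B^\sigma$ norm of $(\mathcal{K}_t^L)_b$ is at most $\rho^L$, for $\sigma\in\{s,s'\}$), writes $m=qL+r$, and telescopes
\[
\mathcal{K}_t^m = \bigl((\mathcal{K}_t^L)_b\bigr)^q (\mathcal{K}_t^r)_b + \mathcal{K}_t^{qL}(\mathcal{K}_t^r)_c + \sum_{k=0}^{q-1}\mathcal{K}_t^{kL}(\mathcal{K}_t^L)_c\bigl((\mathcal{K}_t^L)_b\bigr)^{q-k-1}(\mathcal{K}_t^r)_b .
\]
The first summand gives the $C_1\rho^m\|\phi\|_{\B^s}$ term exactly as in your argument. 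The remaining summands involve $(\mathcal{K}_t^L)_c$ and $(\mathcal{K}_t^r)_c$ only for the finitely many exponents $L$ and $r<L$, so their $\L(\B^{s'},\B^s)$ norms are simply bounded constants by Proposition~\ref{fonda}(iii) — no uniform-in-$m$ control of the nuclear part is ever needed. The exponential factor $M^m$ then comes for free from the trivial bound $\|\mathcal{K}_t^{kL}\|_{\L(\B^s,\B^s)}\leqslant\|\mathcal{K}_t\|_{\L(\B^s,\B^s)}^{kL}$, together with the geometric bound on the iterated bounded parts on $\B^{s'}$. So both routes arrive at a correct Lasota--Yorke inequality, but yours pays for the second term with a substantial additional estimate (left as a sketch), whereas the paper's iterated-$L$ decomposition makes the second term nearly free given the lemmas already in hand — this is the standard trick to convert a one-step decomposition into a Lasota--Yorke inequality without quantitative control of the compact/nuclear part at all iterates.
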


\begin{proof}
Choose $\rho' \in \left] \lambda e^{P_0\(g_0\)}, \rho \right[$. Applying Lemma \ref{sousmult} and recalling Theorem \ref{fonda}, we have for $t$ sufficiently close to $0$ and $\sigma \in \set{s,s'}$
\begin{equation*}
\left\|\(\mathcal{K}_t^m\)_c\right\|_{\L\(\B^{\sigma} , \B^{\sigma}\)} \leqslant C_1 \(\rho'\)^m
\end{equation*}
for some constant $C$. Choose $L$ large enough so that $C_1 \(\rho'\)^L \leqslant \rho^L$ and then write for $m = qL+r \in \N^*$ (with $0 \leqslant r < L$)
\begin{equation*}
\mathcal{K}_t^m = \(\(\mathcal{K}_t^L\)_b\)^q \circ \(\mathcal{K}_t^r\)_b + \mathcal{K}_t^{qL} \circ \(\mathcal{K}_t^r\)_c + \sum_{k=0}^{q-1} \mathcal{K}_t^{kL} \circ  \(\mathcal{K}_t^L\)_c \circ \(\(\mathcal{K}_t^L\)_b\)^{q-k-1} \circ \(\mathcal{K}_t^r\)_b
.\end{equation*}
Thus if $\phi \in \B^s$ and $t$ sufficiently close to $0$, we have
\begin{align*}
\left\|\mathcal{K}_t^m \phi \right\|_{\B^s} & \leqslant C \rho^m \left\|\phi\right\|_{\B^s} \\ & + \( \left\|\mathcal{K}_t\right\|_{\L\(\B^s,\B^s\)}^{qL} \left\|\(\mathcal{K}_t^r\)_c \right\|_{\L\(\B^{s'}, \B^s\)} + \sum_{k=0}^{q-1} \left\|\mathcal{K}_t\right\|_{\L\(\B^s, \B^s\)}^{kL} \left\|\(\mathcal{K}_t^L\)_c\right\|_{\L\(\B^{s'}, \B^s\)}  c \rho_1^{\(q-k-1\)L +r}\) \left\|\phi\right\|_{\B^{s'}}
.\end{align*}
From which we get
\begin{equation*}
\left\|\mathcal{K}_t^m \phi \right\|_{\B^s}  \leqslant C_1 \rho^m \left\|\phi\right\|_{\B^s} + C_2 M^m \left\|\phi\right\|_{\B^{s'}}
\end{equation*}
for some constant $C_1, C_2, M$. 
\end{proof}

From now on, we suppose that $K$ is a rectangle. Fix $\eta> \lambda e^{P_0\(g_0\)} $ as in Lemma \ref{trouspec} for $t=0$ and $0 < \delta< e^{P_0\(g_0\)} - \eta $. Set
\begin{equation*}
V_{\delta,\eta} = \set{z \in \C : \left|z \right| \leqslant \eta \textrm{ or } \left| z - e^{P_0\(g_0\)} \right|  \leqslant \delta}
.\end{equation*}

We state a result of continuity and then a result of differentiability.

\begin{lm}\label{bound}\label{rescont}
If $N \geqslant 1$ then for all $ \frac{5}{2} < s < r-1 $ and all $\epsilon >0$, there is a neighbourhood $W$ of $0$ in $K$ such that for all $t \in W$, the spectrum of $\mathcal{K}_t$ acting on $\B^s$ is contained in $V_{\delta,\eta}$. Moreover there exists a constant $C$ such that for all $t \in W$ and $z \in \C \setminus V_{\delta,\eta}$ we have
\begin{equation}\label{bo}
\left\|\(z-\mathcal{K}_t\)^{-1}\right\|_{\L\(\B^s,\B^s\)} \leqslant C
\end{equation}
and the map
\begin{equation*}
t \mapsto \(z-\mathcal{K}_t\)^{-1} \in \L\(\B^s, \B^{s-\epsilon}\)
\end{equation*}
is continuous on $W$.
\end{lm}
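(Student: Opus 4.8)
This is a direct application of the perturbation theory of Keller and Liverani (\cite{Ke-Li}, see also \cite{GLK}), whose analytic hypotheses are supplied by the lemmas already proved. I would first reduce to $\epsilon<s-\frac{3}{2}$: this loses nothing, since proving the statement for one value of $\epsilon$ proves it for every larger one (the inclusion $\B^{s-\epsilon}\hookrightarrow\B^{s-\epsilon'}$ being bounded for $\epsilon\leqslant\epsilon'$), and it ensures $\frac{3}{2}<s-\epsilon<s<r-1$. Fix $\rho$ with $\lambda e^{P_0\(g_0\)}<\rho<\eta$ and $\sigma_0$ with $\frac{3}{2}<\sigma_0<s-\epsilon$. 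Then the three hypotheses of \cite{Ke-Li}, for the strong space $\B^\sigma$ with $\sigma\in\set{s,\,s-\epsilon}$ and the weak space $\B^{\sigma_0}$, are in force on some neighbourhood $W_0$ of $0$ in $K$: a uniform bound $\left\|\mathcal{K}_t^m\right\|_{\L\(\B^{\sigma_0},\B^{\sigma_0}\)}\leqslant CM^m$ (Lemma \ref{toujours}, bounding the weak norm by a stronger one); a uniform Lasota--Yorke inequality $\left\|\mathcal{K}_t^m\phi\right\|_{\B^\sigma}\leqslant C_1\rho^m\left\|\phi\right\|_{\B^\sigma}+C_2M^m\left\|\phi\right\|_{\B^{\sigma_0}}$ (Lemma \ref{toujours}); and $\left\|\mathcal{K}_t-\mathcal{K}_{t_0}\right\|_{\L\(\B^\sigma,\B^{\sigma_0}\)}\to 0$ as $t\to t_0$, for each $t_0\in K$ (Remark \ref{transder}).

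Next I would feed in the spectral picture of $\mathcal{K}_0$. By Lemma \ref{trouspec} --- whose $\eta$ does not depend on the Sobolev exponent --- for $\sigma\in\set{s,\,s-\epsilon}$ the operator $\mathcal{K}_0$ on $\B^\sigma$ has $e^{P_0\(g_0\)}$ as a simple eigenvalue and the rest of its spectrum of modulus $<\eta$ and separated from $e^{P_0\(g_0\)}$; hence $\textrm{spec}\(\mathcal{K}_0|_{\B^\sigma}\)\subseteq V_{\delta',\eta'}$ for some $\delta'<\delta$, $\eta'<\eta$. The theorem of \cite{Ke-Li} then produces a neighbourhood $W\subseteq W_0$ of $0$ such that, for all $t\in W$ and $\sigma\in\set{s,\,s-\epsilon}$, the spectrum of $\mathcal{K}_t$ on $\B^\sigma$ lies in $V_{\delta'',\eta''}$ for some $t$-independent $\delta''\in\left[\delta',\delta\right)$, $\eta''\in\left[\eta',\eta\right)$, and $\(z-\mathcal{K}_t\)^{-1}$ is bounded in $\L\(\B^\sigma,\B^\sigma\)$ uniformly over $t\in W$ and over $z$ in any fixed compact subset of $\(\C\setminus\textrm{spec}\(\mathcal{K}_0|_{\B^\sigma}\)\)\cap\set{|z|>\rho}$. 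Now any $z\in\C\setminus V_{\delta,\eta}$ has $|z|>\eta>\rho$ and, since $\textrm{spec}\(\mathcal{K}_0|_{\B^\sigma}\)\subseteq V_{\delta',\eta'}\subseteq V_{\delta'',\eta''}$, satisfies $\textrm{dist}\(z,\textrm{spec}\(\mathcal{K}_0|_{\B^\sigma}\)\)\geqslant\textrm{dist}\(z,V_{\delta'',\eta''}\)\geqslant c_0$ with $c_0=\min\(\eta-\eta'',\delta-\delta''\)>0$. So, with $R=2\sup_{t\in W}\left\|\mathcal{K}_t\right\|_{\L\(\B^\sigma,\B^\sigma\)}$, the closure of $\set{z\in\C\setminus V_{\delta,\eta}:|z|\leqslant R}$ is a compact subset of $\set{|z|\geqslant\eta}\subseteq\set{|z|>\rho}$ lying at distance $\geqslant c_0$ from $\textrm{spec}\(\mathcal{K}_0|_{\B^\sigma}\)$, on which the resolvent is uniformly bounded by \cite{Ke-Li}, while for $|z|>R$ the Neumann series gives $\left\|\(z-\mathcal{K}_t\)^{-1}\right\|_{\L\(\B^\sigma,\B^\sigma\)}\leqslant 2/R$. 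Altogether $\textrm{spec}\(\mathcal{K}_t|_{\B^\sigma}\)\subseteq V_{\delta,\eta}$ and $\left\|\(z-\mathcal{K}_t\)^{-1}\right\|_{\L\(\B^\sigma,\B^\sigma\)}$ is bounded uniformly over $t\in W$ and $z\in\C\setminus V_{\delta,\eta}$; for $\sigma=s$ this is \eqref{bo}.

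Finally, for the continuity of $t\mapsto\(z-\mathcal{K}_t\)^{-1}\in\L\(\B^s,\B^{s-\epsilon}\)$ on $W$, I would fix $t_0\in W$ and $z\in\C\setminus V_{\delta,\eta}$ and exploit the resolvent identity
\[
\(z-\mathcal{K}_t\)^{-1}-\(z-\mathcal{K}_{t_0}\)^{-1}=\(z-\mathcal{K}_t\)^{-1}\(\mathcal{K}_t-\mathcal{K}_{t_0}\)\(z-\mathcal{K}_{t_0}\)^{-1},
\]
regarded as a composition $\B^s\to\B^s\to\B^{s-\epsilon}\to\B^{s-\epsilon}$ with arrows, in order, $\(z-\mathcal{K}_{t_0}\)^{-1}$, $\mathcal{K}_t-\mathcal{K}_{t_0}$ and $\(z-\mathcal{K}_t\)^{-1}$: the outer two are uniformly bounded by the second paragraph (with $\sigma=s$ and $\sigma=s-\epsilon$ respectively), and the middle one tends to $0$ in $\L\(\B^s,\B^{s-\epsilon}\)$ as $t\to t_0$ by Remark \ref{transder}, so $\left\|\(z-\mathcal{K}_t\)^{-1}-\(z-\mathcal{K}_{t_0}\)^{-1}\right\|_{\L\(\B^s,\B^{s-\epsilon}\)}\to 0$. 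The one point that really needs care is not analytic --- all the substantive estimates are already in Lemmas \ref{toujours} and \ref{trouspec} and in Remark \ref{transder} --- but the spectral-gap bookkeeping of the second paragraph: one must check that $V_{\delta,\eta}$ swallows the perturbed spectrum with room to spare, so that $\C\setminus V_{\delta,\eta}$ breaks into a compact piece, where \cite{Ke-Li} yields the resolvent bound, and a neighbourhood of infinity, where the Neumann series suffices.
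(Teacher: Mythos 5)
Your proof is correct and rests on the same engine as the paper's---the Keller--Liverani stability theorem, with Lemma \ref{toujours} supplying the uniform Lasota--Yorke inequality, Lemma \ref{trouspec} the spectral picture of $\mathcal{K}_0$, and Remark \ref{transder} the norm-closeness---but you organise the continuity step differently. The paper applies Theorem 1 of \cite{Ke-Li} with strong norm $\|\cdot\|_{\B^s}$ and weak norm $\|\cdot\|_{\B^{s-\epsilon}}$, which yields the spectral containment, the bound \eqref{bo}, \emph{and} the H\"older-continuity of $(z-\mathcal{K}_t)^{-1}$ in $\L(\B^s,\B^{s-\epsilon})$ directly from the theorem's conclusion; it then re-applies the theorem centred at each $t_0\in W$ so as to get continuity at every point, not just at $0$. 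You instead obtain uniform resolvent bounds on \emph{both} $\B^s$ and $\B^{s-\epsilon}$ (via two applications of \cite{Ke-Li} against an auxiliary weaker space $\B^{\sigma_0}$), and then deduce continuity in $\L(\B^s,\B^{s-\epsilon})$ from the resolvent identity
\[
(z-\mathcal{K}_t)^{-1}-(z-\mathcal{K}_{t_0})^{-1}=(z-\mathcal{K}_t)^{-1}(\mathcal{K}_t-\mathcal{K}_{t_0})(z-\mathcal{K}_{t_0})^{-1}
\]
read as $\B^s\to\B^s\to\B^{s-\epsilon}\to\B^{s-\epsilon}$, together with the $\mathcal{C}^0$ regularity of $t\mapsto\mathcal{K}_t\in\L(\B^s,\B^{s-\epsilon})$. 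This replaces the "re-centre at each $t_0$" trick by a three-factor decomposition: cleaner in that continuity is made an elementary corollary of the uniform bounds, at the modest cost of having to run Keller--Liverani on two spaces and to reduce in advance to $\epsilon<s-\tfrac{3}{2}$ so that $\B^{s-\epsilon}$ stays in the admissible Sobolev range. You also make explicit the split of $\C\setminus V_{\delta,\eta}$ into a compact piece, handled by \cite{Ke-Li}, and a neighbourhood of infinity handled by a Neumann series; the paper leaves this implicit. Two small technical points to keep in mind: the paper explicitly notes (and you should too) that the theorem in \cite{Ke-Li} is stated for a one-dimensional perturbation parameter but applies verbatim here because the constants are given explicitly; and the phrase "$\text{spec}(\mathcal{K}_0|_{\B^\sigma})\subseteq V_{\delta',\eta'}$ with $\eta'<\eta$" tacitly assumes $\eta$ was chosen strictly above the modulus of the second eigenvalue, which is the intended reading of Lemma \ref{trouspec} and is what justifies the spectrum of $\mathcal{K}_t$ staying inside $V_{\delta,\eta}$ rather than drifting just past $|z|=\eta$.
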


\begin{proof}
We want to appply Theorem 1 of \cite{Ke-Li} twice with $\|.\| = \|.\|_{\B^s}$ and $|.| = \left\|.\right\|_{\B^{s-\epsilon}}$ and $\delta$ replaced by $\frac{\delta}{2}$. This theorem is stated in a one-dimensional setting but in view of the dependences in the data of the constants appearing in the results (that are explicitly given), it may be applied here. Thanks to Theorem \ref{fonda} and Lemma \ref{toujours}, there is a neighbourhood $W'$ of $0$ in $K$ such that the conditions (2) and (3) of \cite{Ke-Li} are fulfilled by the family $\(\mathcal{K}_t\)_{t \in W'}$(with $\alpha = \rho < \eta$). Since $t \mapsto \mathcal{K}_t \in \L\(\B^{s},\B^{s-1-\epsilon}\)$ is $\mathcal{C}^1$ on a neighbourhood of $K$, we get by interpolating between $\B^s$ and $\B^{s-1-\epsilon}$ (that is applying the inequality \eqref{interpolation})
\begin{equation*}
\left\|\mathcal{K}_{t_1} - \mathcal{K}_{t_0} \right\|_{L\(\B^s, \B^{s-\epsilon}\)} \leqslant C \left|t_1-t_0\right|^{\beta}
\end{equation*} 
for all $t_0,t_1 \in K$ and some constants $C$ and $\beta >0$. Thus the condition (5) of \cite{Ke-Li} is fulfilled on $W'$. As pointed out in Remark 6 of \cite{Ke-Li}, the condition (4) is unnecessary here since $\C \setminus V_{\frac{\delta}{2},\eta}$ is connected.

Thus applying this theorem and the remark, we find a neighbourhood $W$ of $0$ in $t$ such that for all $t \in W$ the spectrum of $\mathcal{K}_t$ acting on $\B^s$ is contained in $V_{\frac{\delta}{2},\eta}$. Now, we may apply the theorem from \cite{Ke-Li} again, taking each point of $W$ as the origin, to end the proof of the lemma.
\end{proof}

\begin{lm}\label{derres}
For all integers $0 \leqslant k \leqslant N-1$, all real $\frac{5}{2} + k < s < r-1$, and all $\epsilon >0$, there is a neighbourhood $W$ of $0$ in $K$ such that for all $z \in V_{\delta,\eta}$ the map
\begin{equation}\label{res}
t \mapsto \(z-\mathcal{K}_t\)^{-1} \in \L\(\B^s,\B^{s-k-\epsilon}\)
\end{equation}
is $\mathcal{C}^k$ on $W$. Moreover, for all multi-indices $\alpha$ with $\left|\alpha \right| \leqslant k$ we have
\begin{equation}\label{resder}
\frac{\partial^{\alpha}}{\partial t^{\alpha}} \(z-\mathcal{K}_t\)^{-1} = \sum_{\substack{\alpha_1,\dots,\alpha_j \neq 0 \\ \alpha_1 + \dots + \alpha_j=\alpha}} \(z-\mathcal{K}_t\)^{-1} \(\frac{\partial^{\alpha_1}}{\partial t^{\alpha_1}} \mathcal{K}_t\) \(z- \mathcal{K}_t\)^{-1} \dots \(\frac{\partial^{\alpha_j}}{\partial t^{\alpha_j}} \mathcal{K}_t\) \(z-\mathcal{K}_t\)^{-1}
.\end{equation}
\end{lm}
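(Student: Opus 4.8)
The plan is to argue by induction on $k$, the case $k=0$ being exactly Lemma \ref{rescont}. Throughout write $R_t = \(z-\mathcal{K}_t\)^{-1}$; the neighbourhood $W$ will be the intersection of the finitely many neighbourhoods produced by Lemma \ref{rescont} at the finitely many Sobolev regularities that come up below, so that on $W$ the resolvents $R_t$ are uniformly bounded on every $\B^{\sigma}$ with $\frac{5}{2}<\sigma<r-1$ (by \eqref{bo}) and depend continuously on $t$ as maps $\B^{\sigma}\to\B^{\sigma-\epsilon'}$ for every $\epsilon'>0$. The basic building block I would isolate first is a single-derivative formula for the resolvent: if $\frac{7}{2}<\sigma<r-1$ (and $N\geqslant 1$), then $t\mapsto R_t$ is differentiable as a map into $\L\(\B^{\sigma},\B^{\sigma-1-\epsilon'}\)$ for every small $\epsilon'>0$, with $\drond{}{t_i}R_t = R_t\(\drond{\mathcal{K}_t}{t_i}\)R_t$. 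To prove this I would start from the resolvent identity $R_{t'}-R_t = R_{t'}\(\mathcal{K}_{t'}-\mathcal{K}_t\)R_t$, expand $\mathcal{K}_{t'}-\mathcal{K}_t$ to first order in $\L\(\B^{\sigma},\B^{\sigma-1-\epsilon''}\)$ (Remark \ref{transder}, using $\sigma>\frac{5}{2}$), bound the remainder with the uniform bound on $R_{t'}$ acting on $\B^{\sigma-1-\epsilon''}$ (which forces $\sigma-1-\epsilon''>\frac{5}{2}$, i.e. $\sigma>\frac{7}{2}$), and finally replace $R_{t'}$ by $R_t$ in the main term via the continuity of $t'\mapsto R_{t'}$ from $\B^{\sigma-1-\epsilon''}$ to $\B^{\sigma-1-\epsilon'}$. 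The constraint $\sigma>\frac{7}{2}=\frac{5}{2}+1$ is precisely why the admissible range of $s$ loses one unit per derivative.

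I would then obtain \eqref{resder} by iterating this formula together with the Leibniz rule for operator-valued maps: differentiating $R_t$ once gives $R_t\(\drond{\mathcal{K}_t}{t_i}\)R_t$, and differentiating a term of the form appearing in \eqref{resder} once more either hits a resolvent factor (apply the single-derivative formula, which inserts one further factor $\drond{\mathcal{K}_t}{t_l}$ together with one further resolvent) or hits a factor $\frac{\partial^{\alpha_i}}{\partial t^{\alpha_i}}\mathcal{K}_t$ (legitimate because $t\mapsto\mathcal{K}_t$ is $\mathcal{C}^{N}$ by Remark \ref{transder}, and the orders of the $\mathcal{K}$-derivatives that occur stay $\leqslant k\leqslant N-1<N$); either way the outcome is again a sum of terms of the shape in \eqref{resder} with $\left|\alpha\right|$ increased by one, and one checks that the terms that arise are exactly those appearing there. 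The point to verify is that every Sobolev index occurring in these compositions stays in $\left]\frac{5}{2},r-1\right[$: reading a term $R_t\(\frac{\partial^{\alpha_1}}{\partial t^{\alpha_1}}\mathcal{K}_t\)R_t\cdots\(\frac{\partial^{\alpha_j}}{\partial t^{\alpha_j}}\mathcal{K}_t\)R_t$ from the right, each resolvent costs only an arbitrarily small amount of regularity (being bounded on each $\B^{\sigma}$, by \eqref{bo}), while each $\frac{\partial^{\alpha_i}}{\partial t^{\alpha_i}}\mathcal{K}_t$ costs $\left|\alpha_i\right|$ plus an arbitrarily small amount and requires its input index to exceed $\left|\alpha_i\right|+\frac{3}{2}$; hence the lowest index reached is close to $s-\left|\alpha\right|\geqslant s-k$, which is $>\frac{5}{2}$ exactly by the hypothesis $s>\frac{5}{2}+k$, while an inner resolvent that gets differentiated sits at an index close to $s-k+1>\frac{7}{2}$, so the single-derivative formula does apply to it.

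It then remains to note that each term of \eqref{resder} with $\left|\alpha\right|=k$ is a composition of operator-valued maps each of which is norm-continuous on $W$ (the resolvent factors by Lemma \ref{rescont}, the factors $\frac{\partial^{\alpha_i}}{\partial t^{\alpha_i}}\mathcal{K}_t$ as continuous derivatives of the $\mathcal{C}^{N}$ map $t\mapsto\mathcal{K}_t$) and uniformly bounded, so the composition is norm-continuous as a map $t\mapsto\cdot\in\L\(\B^s,\B^{s-k-\epsilon}\)$; distributing the loss $\epsilon$ among the finitely many factors guarantees it indeed sends $\B^s$ into $\B^{s-k-\epsilon}$. Consequently all partial derivatives of $t\mapsto R_t$ up to order $k$ exist, are given by \eqref{resder}, and the ones of order $k$ are continuous into $\L\(\B^s,\B^{s-k-\epsilon}\)$, so $t\mapsto R_t$ is $\mathcal{C}^k$ there. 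I expect the genuine difficulty to lie entirely in this bookkeeping — keeping every intermediate index inside $\left]\frac{5}{2},r-1\right[$, every differentiated resolvent above $\frac{7}{2}$, and the accumulated small losses below the prescribed $\epsilon$ — the resolvent identity, the Leibniz rule, and the already-established continuity and uniform boundedness doing the rest.
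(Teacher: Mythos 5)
Your strategy is essentially the same as the paper's — resolvent identity for the base case, then iteration with the Sobolev-index bookkeeping you describe, and your accounting of which indices occur ($>\frac{5}{2}$ everywhere, $>\frac{7}{2}$ at any differentiated resolvent, which is where the hypothesis $s>\frac{5}{2}+k$ comes from) matches the paper. For $k=1$ you reprove by hand what the paper gets by citing Theorem~A.4 of \cite{Bal2}; that is fine. The formula \eqref{resder} you obtain is correct.

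However, the step that carries the inductive load — ``iterating this formula together with the Leibniz rule for operator-valued maps'' — has a real gap, and it is precisely the one the paper flags: \emph{``Notice that we cannot calculate the second partial derivatives by differentiating the partial derivatives as products.''} The obstruction is that the Leibniz rule for $t\mapsto g(t)\circ f(t)$ requires $f$ to be $\mathcal{C}^1$ into a fixed $\L(X,Y)$ and $g$ to be $\mathcal{C}^1$ into a fixed $\L(Y,Z)$. In your setting $f(t)$ is \emph{valued} in $\L(\B^s,\B^{s-|\alpha|-\epsilon_1})$ but only \emph{differentiable} into $\L(\B^s,\B^{s-|\alpha|-1-\epsilon_2})$, so to invoke the Leibniz rule one is forced to take $Y=\B^{s-|\alpha|-1-\epsilon_2}$; then $g=R_t$ being differentiable on $Y$ lands $g'$ in $\L(Y,\B^{\tau_Y-1-\epsilon_3})$, and the product rule concludes only that $g\circ f$ is $\mathcal{C}^1$ into $\L(\B^s,\B^{s-|\alpha|-2-\epsilon_2-\epsilon_3})$, i.e.\ with a loss of $|\alpha|+2$, not $|\alpha|+1$. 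Iterated over $k$ steps, this loses of order $2k$ derivatives of regularity instead of $k+\epsilon$, so the abstract Leibniz rule does not by itself give the sharp statement of the Lemma. What the product rule \emph{does} give is the shape of the candidate derivative \eqref{resder}, whose value (and continuity in $t$) only lose $k+\epsilon$ — as your bookkeeping correctly notes — but that is a different statement from the function being differentiable in the finer topology.

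You pass over this silently: ``all partial derivatives \dots\ exist, are given by \eqref{resder}, and the ones of order $k$ are continuous into $\L(\B^s,\B^{s-k-\epsilon})$, so $t\mapsto R_t$ is $\mathcal{C}^k$ there.'' As written, the derivatives only \emph{exist} in the coarser topology, so this last ``so'' is not justified. There are two ways to close the gap. One is to supply and prove the upgrade lemma: if $h$ is $\mathcal{C}^1$ into $Z'$, with $Z\hookrightarrow Z'$ continuous and injective, and if $Dh$ happens to take values in $\L(\R^D,Z)$ and is continuous into $\L(\R^D,Z)$, then $h$ is in fact $\mathcal{C}^1$ into $Z$ (one proves this via the fundamental theorem of calculus and comparison of Bochner integrals in $Z$ and $Z'$); then an induction with this lemma does upgrade your argument. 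The other — which is what the paper does for $k\geqslant 2$ — is to abandon the product rule, write the difference quotient of the candidate derivative of order $l$ minus the expected order-$(l+1)$ expression explicitly as a sum of a handful of terms (the paper does it for $l=1$), and estimate each term by threading it through a judicious chain of Sobolev indices $s>s_1>s_2'>s_2$; this directly shows the difference quotient converges in the correct operator norm. Your proposal would need one of these two additions to be complete.
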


\begin{proof}
The case $k=0$ has been dealt with in Lemma \ref{rescont}. 

The case $k=1$ is a consequence of Theorem A.4 of \cite{Bal2} with the spaces $\B^0, \B^1,\B^2$ of \cite{Bal2} being here respectively $\B^s, \B^{s-1-\frac{\epsilon}{2}},\B^{s-2-\epsilon}$. There is a neighborhood $W$ of $0$ on which the hypotheses (A.2)-(A.7) are fulfilled (even when replacing $0$ by another element $t_0$ of $W$): (A.2) and (A.3) are consequences of Theorem \ref{fonda} and Lemma \ref{toujours}, (A.4) is contained in Lemma \ref{toujours}, (A.5) is implied by Taylor formula since $t \mapsto \mathcal{K}_t \in \L\(\B^s, \B^{s-1-\frac{\epsilon}{2}}\)$ is $\mathcal{C}^1$ (as pointed out in Remark \ref{transder}), (A.6) and (A.7) are a consequence of the fact that $t \mapsto \mathcal{K}_t \in \L\(\B^s, \B^{s-2-\epsilon}\)$ is $\mathcal{C}^2$. Applying this theorem in each direction and interpolating between $\mathcal{B}^{s-1-\frac{\epsilon}{2}}$ and $\mathcal{B}^{s-2-\epsilon}$, it comes that the map defined by \eqref{res} admits partial derivatives given by \eqref{resder} on a neighborhood of $0$. These partial derivatives are continuous on a neighborhood of $0$ as a consequence of Lemma \ref{rescont}. 

The end of the proof is an induction. We show show how to get $k=2$ from $k=1$. Notice that we cannot calculate the second partial derivatives by differentiating the partial derivatives as products. However, the expected formula will stand. Indeed, fix $i,j \in \set{1,\dots,d}$ and, setting $A_t = \(z-\mathcal{K}_t\)^{-1}$, write the growth rate
\begin{align*}
\frac{A_{t + h e_j}\drond{}{t_i} \mathcal{K}_{t+h e_j} A_{t +h ej} -  A_t \drond{}{t_i} \mathcal{K}_{t} A_t}{h}  - \( \drond{}{t_j} A_t \drond{}{t_i} \mathcal{K}_{t} A_t + A_t \ddeux{}{t_i}{t_j} \mathcal{K}_{t} A_t + A_t \drond{}{t_i} \mathcal{K}_{t} \drond{}{t_j}A_t\)
,\end{align*}
where $e_j$ is the j-th vector of the canonical basis of $\R^d$, as
\begin{align*}
& \(\frac{A_{t+he_j}-A_t}{h} - \drond{}{t_j} A_t\)\drond{}{t_i} \mathcal{K}_{t + h e_j} A_{t+he_j} 
+ A_t \( \frac{\drond{}{t_i} \mathcal{K}_{t+h e_j} - \drond{}{t_i} \mathcal{K}_t}{h} - \ddeux{}{t_i}{t_j} \mathcal{K}_t \) A_{t+h e_j} \\
& \quad + A_t \drond{}{t_i}\mathcal{K}_t \( \frac{A_{t+h e_j} - A_t}{h} - \drond{}{t_j}A_t \) \\
& \quad + \drond{}{t_j} A_t \( \drond{}{t_i} \mathcal{K}_{t + h e_j} A_{t+ h e_j} - \drond{}{t_i} \mathcal{K}_t A_t\) 
+ A_t \ddeux{}{t_i}{t_j} \mathcal{K}_t \(A_t - A_{t+h e_j}\).
\end{align*}
The norm of this expression as an operator from $\B^s$ to $\B^{s-2-\epsilon}$ is smaller than 
\begin{align*}
 & \left\| \frac{A_{t+he_j}-A_t}{h} - \drond{}{t_j} A_t\right\|_{\L\(\B^s, \B^{s_1}\)} \left\|\drond{}{t_i} \mathcal{K}_{t + h e_j} \right\|_{\L\(\B^{s_1}, \B^{s_2}\)} \left\|A_{t+h e_j}\right\|_{\L\(\B^{s_2}, \B^{s_2}\)}  \\
& \quad +  \left\|A_t\right\|_{\L\(\B^s, \B^s\)} \left\| \frac{\drond{}{t_i} \mathcal{K}_{t+h e_j} - \drond{}{t_i} \mathcal{K}_t}{h} - \ddeux{}{t_i}{t_j} \mathcal{K}_t \right\|_{\L\(\B^s, \B^{s_2}\)} \left\|A_{t+h e_j}\right\|_{\L\(\B^{s_2}, \B^{s_2}\)}  \\
& \quad +  \left\|A_t\right\|_{\L\(\B^s, \B^s\)} \left\|\drond{}{t_i}\mathcal{K}_t\right\|_{\L\(\B^s, \B^{s_1}\)} \left\|\frac{A_{t+h e_j} - A_t}{h} - \drond{}{t_j}A_t \right\|_{\L\(\B^{s_1}, \B^{s_2}\)}  \\
& \quad +  \left\|\drond{}{t_j} A_t\right\|_{\L\(\B^{s}, \B^{s_1}\)} \left\| \drond{}{t_i} \mathcal{K}_{t + h e_j} A_{t+ h e_j} - \drond{}{t_i} \mathcal{K}_t A_t\right\|_{\L\(\B^{s_1}, \B^{s_2}\)}  \\
& \quad +  \left\|A_t \ddeux{}{t_i}{t_j} \mathcal{K}_t\right\|_{\L\(\B^s, \B^{s_2'}\)} \left\|A_t - A_{t+h e_j} \right\|_{\L\(\B^{s_2'}, \B^{s_2}\)}, 
\end{align*}
where $s_1 = s-1-\frac{\epsilon}{2}$, $s_2 = s - 2 - \epsilon$ and $s_2' = s - 2 -\frac{\epsilon}{2}$. From the previous cases, we get that, provided $t$ is in some neighbourhood of $0$, this expression tends to $0$ as $h$ tends to $0$. Consequently, $A_t$ has the second partial derivatives announced. Since they are continuous, the result is proved for $k=2$. The idea of the proof in the general case is the same, up to more notational issues.
\end{proof}

We also need some information about the greatest eigenvalue of $\mathcal{K}_t$ and the associated spectral projection.

\begin{lm}\label{rg1}
For all $\frac{5}{2} < s < r-1$, there is a neighbourhood $W$ of $0$ in $K$ such that for all $t \in W$, $e^{P_t\(g_t\)} \in \D{e^{P_0\(g_0\)}}{ \delta}$ and $e^{P_t\(g_t\)}$ is the only element of the spectrum of $\mathcal{K}_t$ that has modulus greater than~$\eta$.
\end{lm}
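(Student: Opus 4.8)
The plan is to build the Riesz spectral projection $\Pi_t$ of $\mathcal{K}_t$ onto the part of $\sigma\(\mathcal{K}_t\)$ lying in $\D{e^{P_0\(g_0\)}}{\delta}$, to show that it has rank one for $t$ near $0$, and then to identify the corresponding eigenvalue with $e^{P_t\(g_t\)}$ by means of Lemma \ref{trouspec}. First I would shrink $W$ so that the conclusions of Lemma \ref{rescont} hold on it for our fixed $s \in \left]\tfrac52,r-1\right[$ and some small $\epsilon>0$: for $t \in W$ the spectrum of $\mathcal{K}_t$ on $\B^s$ is contained in $V_{\delta,\eta}$, the bound \eqref{bo} holds uniformly for $z \notin V_{\delta,\eta}$, and $t \mapsto \(z-\mathcal{K}_t\)^{-1} \in \L\(\B^s,\B^{s-\epsilon}\)$ is continuous. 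Choosing $\delta'$ with $\delta < \delta' < e^{P_0\(g_0\)} - \eta$ and letting $\gamma$ be the positively oriented circle of centre $e^{P_0\(g_0\)}$ and radius $\delta'$, one has for $z \in \gamma$ that $\left|z - e^{P_0\(g_0\)}\right| = \delta' > \delta$ and $\left|z\right| \geqslant e^{P_0\(g_0\)} - \delta' > \eta$, so $\gamma \subseteq \C \setminus V_{\delta,\eta}$; hence $\gamma$ avoids $\sigma\(\mathcal{K}_t\)$ for all $t \in W$ and encircles exactly the component $\D{e^{P_0\(g_0\)}}{\delta}$ of $V_{\delta,\eta}$. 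Therefore
\[
\Pi_t = \frac{1}{2\pi i} \oint_\gamma \(z-\mathcal{K}_t\)^{-1}\,\mathrm{d}z
\]
is a well-defined bounded projection on $\B^s$ --- the spectral projection associated with $\sigma\(\mathcal{K}_t\) \cap \D{e^{P_0\(g_0\)}}{\delta}$ --- and, using \eqref{bo} and the continuity of $t \mapsto \(z-\mathcal{K}_t\)^{-1}$ in $\L\(\B^s,\B^{s-\epsilon}\)$ uniform for $z$ on the compact curve $\gamma$, the map $t \mapsto \Pi_t \in \L\(\B^s,\B^{s-\epsilon}\)$ is continuous on $W$.

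Next I would control $\operatorname{rank}\Pi_t$. By Lemma \ref{trouspec} at $t=0$, $e^{P_0\(g_0\)}$ is a simple eigenvalue of $\mathcal{K}_0$ and the only element of $\sigma\(\mathcal{K}_0\)$ of modulus $>\eta$; since $\delta < e^{P_0\(g_0\)} - \eta$ gives $\D{e^{P_0\(g_0\)}}{\delta} \subseteq \set{z : \left|z\right| > \eta}$, we obtain $\sigma\(\mathcal{K}_0\) \cap \D{e^{P_0\(g_0\)}}{\delta} = \set{e^{P_0\(g_0\)}}$, so $\Pi_0$ has rank one; write $\Pi_0 = \ell_0 \otimes u_0$ with $\ell_0\(u_0\) = 1$. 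Since $\Pi_t u_0 \to \Pi_0 u_0 = u_0 \neq 0$ in $\B^{s-\epsilon}$, we get $\Pi_t \neq 0$, hence $\sigma\(\mathcal{K}_t\) \cap \D{e^{P_0\(g_0\)}}{\delta} \neq \emptyset$, for $t$ near $0$, so $\operatorname{rank}\Pi_t \geqslant 1$. That $\operatorname{rank}\Pi_t = 1$ on a smaller neighbourhood of $0$ is the spectral-stability output of the Gouëzel--Keller--Liverani theory (\cite{Ke-Li}, \cite{GLK}): the hypotheses already verified in the proof of Lemma \ref{rescont} are exactly those under which the ranks of the associated spectral projections are preserved. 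Alternatively one argues directly via Proposition \ref{fonda}: writing, for $m \in \N^*$, $\Pi_t = A_t^{\(m\)}\mathcal{K}_t^m$ with $A_t^{\(m\)} = \frac{1}{2\pi i}\oint_\gamma z^{-m}\(z-\mathcal{K}_t\)^{-1}\,\mathrm{d}z$ (so $\bigl\|A_t^{\(m\)}\bigr\|_{\L\(\B^s,\B^s\)} \leqslant C\(e^{P_0\(g_0\)}-\delta'\)^{-m}$ by \eqref{bo}) and inserting $\mathcal{K}_t^m = \(\mathcal{K}_t^m\)_b + \(\mathcal{K}_t^m\)_c$, the summand $A_t^{\(m\)}\(\mathcal{K}_t^m\)_b$ has $\B^s \to \B^s$ norm $\leqslant C\bigl(\lambda^{s-\frac12}R \,/\, (e^{P_0\(g_0\)}-\delta')\bigr)^m \to 0$ as $m \to \infty$, uniformly for $t$ near $0$ --- by Lemma \ref{sousmult} with $R$ close to $e^{P_0\(g_0\)}$ and using $\lambda^{s-\frac12}e^{P_0\(g_0\)} \leqslant \lambda e^{P_0\(g_0\)} < \eta < e^{P_0\(g_0\)}-\delta'$ --- while $A_t^{\(m\)}\(\mathcal{K}_t^m\)_c$ is a nuclear, hence compact, operator of $\B^s$ which depends continuously on $t$ and extends continuously to $\B^{s-\epsilon}$; fixing $m$ so that the first norm is $<\tfrac12$, a unit vector $v \in \operatorname{Range}\Pi_t \cap \ker\ell_0$ (which exists as soon as $\operatorname{rank}\Pi_t \geqslant 2$) would satisfy $v = \Pi_t v = \(\Pi_t - \Pi_0\)v$ --- hence $v \to 0$ in $\B^{s-\epsilon}$ as $t \to 0$ --- while $\bigl\|A_t^{\(m\)}\(\mathcal{K}_t^m\)_c v\bigr\|_{\B^s} = \bigl\|v - A_t^{\(m\)}\(\mathcal{K}_t^m\)_b v\bigr\|_{\B^s} \geqslant \tfrac12$; since $A_t^{\(m\)}\(\mathcal{K}_t^m\)_c$ converges in $\L\(\B^s,\B^s\)$ and extends continuously to $\B^{s-\epsilon}$, its value on such a $v$ tends to $0$ in $\B^s$, a contradiction, so $\operatorname{rank}\Pi_t \leqslant 1$ for $t$ near $0$.

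Finally I would identify and conclude. Fix $t$ in the neighbourhood obtained above. Since $\operatorname{rank}\Pi_t = 1$, the set $\sigma\(\mathcal{K}_t\) \cap \D{e^{P_0\(g_0\)}}{\delta}$ reduces to a single simple eigenvalue $\mu\(t\)$ with $\left|\mu\(t\)\right| \geqslant e^{P_0\(g_0\)} - \delta > \eta$, and, since $\sigma\(\mathcal{K}_t\) \subseteq V_{\delta,\eta}$, every other point of $\sigma\(\mathcal{K}_t\)$ has modulus $\leqslant \eta$; thus $\mu\(t\)$ is the only element of $\sigma\(\mathcal{K}_t\)$ of modulus $> \eta$. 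Let $\eta_t < e^{P_t\(g_t\)}$ be the number furnished by Lemma \ref{trouspec} for this $t$. If $\mu\(t\) \neq e^{P_t\(g_t\)}$, then $e^{P_t\(g_t\)}$, being an eigenvalue of $\mathcal{K}_t$ distinct from $\mu\(t\)$, satisfies $e^{P_t\(g_t\)} \leqslant \eta$; but then $\left|\mu\(t\)\right| > \eta \geqslant e^{P_t\(g_t\)} > \eta_t$, so $\mu\(t\)$ is an eigenvalue of modulus $> \eta_t$, which by Lemma \ref{trouspec} forces $\mu\(t\) = e^{P_t\(g_t\)}$ --- a contradiction. Hence $e^{P_t\(g_t\)} = \mu\(t\) \in \D{e^{P_0\(g_0\)}}{\delta}$ and it is the only element of $\sigma\(\mathcal{K}_t\)$ of modulus greater than $\eta$, as claimed.

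The step I expect to be the main obstacle is the invariance of $\operatorname{rank}\Pi_t$, that is, the stability of the spectral gap: the classical Riesz perturbation argument is unavailable because $\B^s$ does not inject compactly into $\B^{s-\epsilon}$, so one must rely on the Gouëzel--Keller--Liverani mechanism --- concretely, on the fact that the regularizing part in the decomposition of Proposition \ref{fonda} is nuclear while the remaining part is uniformly small --- to make up for it.
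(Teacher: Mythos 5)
Your proof is correct and follows essentially the same route as the paper, whose proof of this lemma is the one-liner ``Apply Corollary 1 of \cite{Ke-Li} together with Lemma \ref{trouspec}'': you construct the Riesz projection $\Pi_t$, invoke the Gou\"ezel--Keller--Liverani spectral-stability output made available by the bounds verified in the proof of Lemma~\ref{rescont}, and then pin down the surviving eigenvalue via Lemma~\ref{trouspec}. Your alternative direct rank-stability argument (using $\Pi_t = A_t^{(m)}\mathcal{K}_t^m$ and the decomposition of Proposition~\ref{fonda}, with the regularizing part mapping $\B^{s-\epsilon}\to\B^s$ uniformly) is a legitimate and correct unpacking of what Corollary~1 of \cite{Ke-Li} delivers, rather than a genuinely different approach.
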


\begin{proof}
Apply Corollary 1 of \cite{Ke-Li} together with Lemma \ref{trouspec}.
\end{proof}

\begin{lm}
For all $0 \leqslant k \leqslant N-1$, all $ \frac{5}{2}+k < s < r-1 $ and all $\frac{5}{2} < s' < r-1-k$, there is a neighbourhood $W$ of $0$ in $K$ such that for all $t \in W$ the spectrum of $\mathcal{K}_t$ is contained in $V_{\delta,\eta}$ and setting for all $t \in W$
\[
\Pi_t = \frac{1}{2i \pi} \s{\Gamma}{\(z-\mathcal{K}_t\)^{-1}}{z}
,\]
where $\Gamma$ is a circle of center $e^{P_0\(g_0\)}$ and of radius slightly larger than $\delta$, the map
\[
t \mapsto \Pi_t \in \L_{nuc}\(\B^s,\B^{s'}\)
\]
is $\mathcal{C}^k$ on $W$.
\end{lm}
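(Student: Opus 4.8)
The plan is to realise $\Pi_t$ as an absolutely convergent series of nuclear operators, obtained by iterating the decomposition of Proposition~\ref{fonda}. Write $\lambda_t = e^{P_t(g_t)}$. Shrinking $W$, I may assume, by Lemmas~\ref{rescont} and~\ref{rg1} (applied with the exponent $s$) and by Lemma~\ref{derres} (with the exponent $s$ and this $k$), that for every $t \in W$: the spectrum of $\mathcal{K}_t$ on $\B^s$ is contained in $V_{\delta,\eta}$, so that $\Gamma$ encircles exactly the eigenvalue $\lambda_t$ and $\Pi_t$ is the rank-one spectral projection onto it; $\lambda_t \in \D{e^{P_0(g_0)}}{\delta}$, hence bounded and bounded away from $0$; $\bigl\|(z-\mathcal{K}_t)^{-1}\bigr\|_{\L(\B^\sigma,\B^\sigma)} \leqslant C$ for $z \in \Gamma$ and the finitely many exponents $\sigma$ used below; and $t \mapsto (z-\mathcal{K}_t)^{-1} \in \L(\B^s,\B^{s-k-\epsilon})$ is $\mathcal{C}^k$ for $z \in \Gamma$. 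I shall also use that $t \mapsto \lambda_t$ is $\mathcal{C}^k$ on $W$; this follows, for instance, from classical perturbation theory for $\mathcal{L}_t$ acting on $\mathcal{C}^r(S^1)$ (whose inclusion into $\mathcal{C}^{r-1}$ is compact), or from Lemma~\ref{derres} through $\lambda_t = \operatorname{tr}(\mathcal{K}_t\Pi_t)$.

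Since $\Pi_t$ projects onto a simple eigenvalue, $\mathcal{K}_t^n\Pi_t = \lambda_t^n\Pi_t$ for every $n$, so~\eqref{decomp} yields, after $J$ iterations,
\[
\Pi_t \;=\; \lambda_t^{-Jn}\bigl((\mathcal{K}_t^n)_b\bigr)^{J}\Pi_t \;+\; \sum_{j=1}^{J}\lambda_t^{-jn}\bigl((\mathcal{K}_t^n)_b\bigr)^{j-1}(\mathcal{K}_t^n)_c\,\Pi_t .
\]
I now fix $n$ large. Proposition~\ref{fonda}(i) and Lemma~\ref{sousmult} give $\bigl\|(\mathcal{K}_t^n)_b\bigr\|_{\L(\B^\sigma,\B^\sigma)} \leqslant c\,C R^{n}\lambda^{n(\sigma-\frac12)}$ with $R$ as close to $e^{P_0(g_0)}$ as I wish; since $\sigma-\frac12 > 2$ for every exponent that occurs, and the choice $\eta > \lambda e^{P_0(g_0)}$, $\delta < e^{P_0(g_0)}-\eta$ leaves the margin $\delta < e^{P_0(g_0)}\bigl(1-\lambda^{\sigma-1/2}\bigr)$, I can arrange $\lambda_t^{-n}\bigl\|(\mathcal{K}_t^n)_b\bigr\|_{\L(\B^\sigma,\B^\sigma)} \leqslant \tfrac12$ uniformly on $W$. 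Consequently the remainder tends to $0$ in $\L(\B^s,\B^s)$; and, letting $(\mathcal{K}_t^n)_c$ carry $\B^s$ into $\B^{s''}$ for a fixed $s''$ with $s'+k < s'' < r-1$ (possible by Proposition~\ref{fonda}(iii) because $s' < r-1-k$) and then letting the $j-1$ copies of $(\mathcal{K}_t^n)_b$ act along a chain of Sobolev indices decreasing from $s''$ to $s'$, the $j$-th term has $\L_{nuc}(\B^s,\B^{s'})$-norm at most a ($t$-uniform) constant times $2^{-(j-1)}$. Thus the series converges absolutely in $\L_{nuc}(\B^s,\B^{s'})$, uniformly in $t \in W$; comparing with the vanishing remainder in $\L(\B^s,\B^{\min(s,s')})$ shows that its sum equals $\Pi_t$, which is therefore nuclear from $\B^s$ to $\B^{s'}$.

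For the $\mathcal{C}^k$ statement I differentiate the series term by term. Each $T_{j,t} := \lambda_t^{-jn}\bigl((\mathcal{K}_t^n)_b\bigr)^{j-1}(\mathcal{K}_t^n)_c\,\Pi_t$ is a finite composition; the Leibniz rule distributes the at most $k$ derivatives among $\lambda_t^{-jn}$ ($\mathcal{C}^k$ by the preliminary remark), the $j-1$ copies of $(\mathcal{K}_t^n)_b$, the single copy of $(\mathcal{K}_t^n)_c$, and $\Pi_t = \frac{1}{2i\pi}\int_\Gamma(z-\mathcal{K}_t)^{-1}\,\mathrm{d}z$ (which is $\mathcal{C}^k$ from $\B^s$ to $\B^{s-k-\epsilon}$ by Lemma~\ref{derres}). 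The value of $s''$ and the chain of indices are arranged so that any factor carrying $q$ of the derivatives has source index at least $s'+q$, which is exactly the threshold demanded by Proposition~\ref{fonda}(ii) and~(iii) for the relevant $\mathcal{C}^q$ statements, while the loss of at most $k$ derivatives produced by $\Pi_t$ is absorbed by $s > \frac32+k$. Hence every $\partial^\alpha T_{j,t}$ with $|\alpha| \leqslant k$ is a continuous $\L_{nuc}(\B^s,\B^{s'})$-valued map on $W$. A quantitative version of Proposition~\ref{fonda}(i)–(ii)—each $t$-differentiation of $(\mathcal{K}_t^n)_b$ costs at most one extra power of $\lambda$ in the exponential bound, the $\lambda^{-n}$ coming from differentiating $T_t^n$ and $g_{n,t}$ being compensated by the $\lambda^n$ contraction of the inverse branch, up to polynomial-in-$n$ factors—shows that the estimates of the previous paragraph survive up to a polynomial-in-$j$ factor; hence $\sum_j \sup_{t\in W}\bigl\|\partial^\alpha T_{j,t}\bigr\|_{\L_{nuc}(\B^s,\B^{s'})} < +\infty$ for $|\alpha| \leqslant k$, and $t\mapsto\Pi_t \in \L_{nuc}(\B^s,\B^{s'})$ is $\mathcal{C}^k$ on $W$.

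The main obstacle is precisely this Sobolev-index bookkeeping. The \emph{single} smoothing factor $(\mathcal{K}_t^n)_c$ must push the operator up to a space $\B^{s''}$ with $s'' < r-1$—this is the one place the hypothesis $s' < r-1-k$ enters—while the long, $j$-dependent product of "bounded-but-small" factors $(\mathcal{K}_t^n)_b$ must at the same time absorb all the differentiations and a summable sequence of auxiliary $\epsilon$-losses \emph{and} keep every intermediate source index above $\frac32$ plus the local order of differentiation. Uniformity in $j$ then relies on the exponential smallness of $(\mathcal{K}_t^n)_b$—ensured by $n$ large together with the margin built into the choice of $\eta$ and $\delta$—beating the polynomial Leibniz overhead, and on the quantitative control of the $n$-dependence of the $\mathcal{C}^k$-norms of $t\mapsto(\mathcal{K}_t^n)_b$, which one has to read off from the proof of Proposition~\ref{fonda}.
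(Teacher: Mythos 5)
Your argument is correct, but it takes a genuinely different route from the paper's. The paper exploits the rank-one structure of $\Pi_t$ directly: it fixes $\phi_0 \in \B^{s'+k+\epsilon}\cap\B^s$ with $\Pi_0\phi_0\neq0$ and $l\in(\B^0)'$ with $l(\Pi_0\phi_0)\neq0$, writes $\Pi_t=m_t\otimes\rho_t$ with $\rho_t=\Pi_t\phi_0$ and $m_t=l\circ\Pi_t/l(\rho_t)$, and gets $t\mapsto\rho_t\in\B^{s'}$ and $t\mapsto m_t\in(\B^s)'$ of class $\mathcal{C}^k$ from two applications of Lemma \ref{derres} (with source space $\B^{s'+k+\epsilon}$ and target $\B^{s'}$ for the vector, with source $\B^s$ and target $\B^0$ for the functional) plus dominated convergence in the contour integral; since the nuclear norm of a rank-one operator is the product of the norms of its two factors, \eqref{rg12} concludes in a few lines. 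This is exactly where the hypothesis $s'<r-1-k$ enters for the paper: it leaves room for the source exponent $s'+k+\epsilon$ below the ceiling $r-1$, just as it leaves room for your intermediate exponent $s''$. Your Neumann-type expansion $\Pi_t=\sum_{j\geqslant1}\lambda_t^{-jn}((\mathcal{K}_t^n)_b)^{j-1}(\mathcal{K}_t^n)_c\Pi_t$ instead extracts nuclearity from the single smoothing factor $(\mathcal{K}_t^n)_c$ and summability from the smallness of $(\mathcal{K}_t^n)_b$ relative to $\lambda_t^n$; it does close, and your concern about the $n$-dependence of the $\mathcal{C}^k$-norms of $t\mapsto(\mathcal{K}_t^n)_b$ is actually moot, since $n$ is fixed once and for all, at most $k$ factors in any Leibniz term are differentiated (each contributing a fixed constant against the uniform gain $\tfrac12\lambda_t^n$ of the undifferentiated ones), only finitely many Sobolev exponents occur along the chain, and the combinatorial overhead is $O(j^k)$. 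The price of your route is two extra inputs the paper's proof avoids: the $\mathcal{C}^k$-regularity of $t\mapsto e^{P_t(g_t)}$, which the paper establishes only afterwards (Lemma \ref{rmoinsq}) and which you must therefore supply independently as you indicate, and a fully written uniform-in-$j$ Leibniz estimate, which you sketch rather than execute. The benefit is that your argument is much closer in spirit to the machinery actually used in the proof of Theorem \ref{main} (where exactly this kind of $b$/$c$ product expansion and trace estimate reappears), whereas the paper's argument is a short cut tailored to the rank-one case.
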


\begin{proof}
The first point is only a reminder of Lemma \ref{bound}. Recall that $\Pi_t$ is a spectral projection. Choose $\epsilon >0$ sufficiently small and $\phi_0 \in \B^{s'+k+\epsilon} \cap \B^s $ such that $\Pi_0 \phi_0 \neq 0$ (for instance take an eigenvector of $\mathcal{K}_0$ for the eigenvalue $e^{P_0\(g_0\)}$). Then choose a linear form $l$ on $\mathcal{B}^0$ such that $l\(\Pi_0 \phi_0 \) \neq 0$. Set $\rho_t = \Pi_t \phi_0$ and notice that lemma \ref{derres} and dominate convergence imply that
\[
t \mapsto \rho_t \in \B^{s'}
\]
is $\mathcal{C}^k$ on a neighbourhood of $0$. In particular $\rho_t \neq 0$ for $t$ close enough of $0$. Applying Lemma \ref{rg1}, the spectral projector $\Pi_t$ has rank one when acting on $\B^s$, providing $t$ is small enough, and can thus be written 
\begin{equation}\label{rg12}
\Pi_t = m_t \otimes \rho_t
\end{equation} 
with $m_t \in \(\B^s \)'$. But for $t$ sufficiently close to $0$ so that $l\(\rho_t\) \neq 0$, $m_t$ may be written as
\[
m_t = \frac{l \circ \Pi_t}{l \(\rho_t\)}
\]
in which $\Pi_t$ is seen as an operator from $\B^{s}$ to $\B^0$, and thus has a $\mathcal{C}^k$ dependence in $t$. Consequently, $m_t$ has a $\mathcal{C}_k$ dependence in $t$ on a neighbourhood of $0$. Hence, the formula \eqref{rg12} ends the proof.
\end{proof}

\begin{lm}\label{rmoinsq}
If $r \geqslant 4$, then the map $ t \mapsto e^{P_t\(g_t\)} $ is $\mathcal{C}^{k}$ on $U$ where $k = \min\(N-1,r-4\)$.
\end{lm}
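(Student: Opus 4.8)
The plan is to recover the top eigenvalue $e^{P_t(g_t)}$ of $\mathcal{K}_t$ as a Rayleigh-type quotient built from the rank-one spectral projector $\Pi_t$ studied just above, and then to read off its regularity from the $\mathcal{C}^k$-dependence of $\Pi_t$ and of $\mathcal{K}_t$. Write $k = \min(N-1,r-4)$ (so $k\geqslant 0$ since $r\geqslant 4$ and, implicitly, $N\geqslant 1$; otherwise there is nothing to prove). The assertion being local, it suffices to prove $\mathcal{C}^k$-regularity near an arbitrary point of $U$, which — after re-running the preceding constructions with that point in the role of the origin — we may call $0$. Since $r\geqslant 4$ we can fix reals $s,s'$ with $\frac{5}{2}+k<s<r-1$ and $\frac{5}{2}<s'<r-1-k$, together with a small $\epsilon>0$. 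By the lemma immediately above there is a neighbourhood $W$ of $0$ in $K$ and a vector $\phi_0$ with $\Pi_0\phi_0\neq 0$ such that, on $W$, the projector $\Pi_t$ has rank one on $\B^s$ and $t\mapsto \rho_t:=\Pi_t\phi_0\in\B^{s'}$ is $\mathcal{C}^k$. Combining this with Lemmas \ref{trouspec} and \ref{rg1}, $\Pi_t$ is the Riesz projector onto the one-dimensional eigenspace of $\mathcal{K}_t$ for $e^{P_t(g_t)}$, so $\rho_t$ is an eigenvector of $\mathcal{K}_t$ for that eigenvalue, and $\rho_t\neq 0$ throughout $W$ after shrinking.

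Next I would pick, by Hahn--Banach, a continuous linear functional $l$ on $\B^{s'-k-\epsilon}$ with $l(\rho_0)\neq 0$ (possible since $\rho_0$ is a nonzero element of $\B^{s'}$, which injects continuously into $\B^{s'-k-\epsilon}$), and shrink $W$ so that $l(\rho_t)\neq 0$ on $W$. Pairing the relation $\mathcal{K}_t\rho_t=e^{P_t(g_t)}\rho_t$ with $l$ gives, for $t\in W$,
\[
e^{P_t(g_t)} = \frac{l(\mathcal{K}_t\rho_t)}{l(\rho_t)}.
\]
The denominator is $\mathcal{C}^k$ because $t\mapsto\rho_t\in\B^{s'}$ is and $l$ is continuous on $\B^{s'}$; it does not vanish on $W$. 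For the numerator I would use Remark \ref{transder}, which gives that $t\mapsto\mathcal{K}_t\in\L(\B^{s'},\B^{s'-k-\epsilon})$ is $\mathcal{C}^k$ (here $\frac{3}{2}<s'<r-1$), and then apply the Leibniz rule to the continuous bilinear composition map $\L(\B^{s'},\B^{s'-k-\epsilon})\times\B^{s'}\to\B^{s'-k-\epsilon}$: in the resulting finite sum every term $(\partial^{\beta}\mathcal{K}_t)(\partial^{\alpha-\beta}\rho_t)$ is continuous with values in $\B^{s'-k-\epsilon}$, so $t\mapsto\mathcal{K}_t\rho_t\in\B^{s'-k-\epsilon}$ is $\mathcal{C}^k$, and composing with the continuous $l$ makes the numerator $\mathcal{C}^k$ as well. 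Being the quotient of a $\mathcal{C}^k$ function by a non-vanishing $\mathcal{C}^k$ function, $t\mapsto e^{P_t(g_t)}$ is $\mathcal{C}^k$ on $W$, hence on $U$.

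The only genuinely delicate point is the bookkeeping of the Sobolev indices, i.e. checking that the various constraints are simultaneously satisfiable and produce exactly $k=\min(N-1,r-4)$: the rank-one lemma yields a $\mathcal{C}^k$ projector only for $k\leqslant N-1$ (this ultimately comes from the resolvent estimates of Lemma \ref{derres}, which cost one order of differentiability relative to $\mathcal{K}_t$), while the target space $\B^{s'}$ carrying $\rho_t$ must obey both $s'>\frac{5}{2}$ and $s'<r-1-k$, forcing $k<r-\frac{7}{2}$, i.e. $k\leqslant r-4$ — and the hypothesis $r\geqslant 4$ is precisely what makes this range of $s'$ nonempty in the case $k=0$. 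Everything else — the eigenvalue identity, the choice of $l$, and the Leibniz computation — is routine once $\Pi_t$ and $\mathcal{K}_t$ are known to have the stated differentiability, and the derivative losses of $\mathcal{K}_t$ cause no trouble because every term in the Leibniz sum already lands in $\B^{s'-k-\epsilon}$.
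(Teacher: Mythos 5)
Your overall strategy --- recovering $e^{P_t(g_t)}$ as a quotient $l(\mathcal{K}_t\Pi_t\phi_0)/l(\Pi_t\phi_0)$ with $\Pi_t$ the rank-one spectral projector --- is the same as the paper's, but the way you establish regularity of the numerator has a gap that makes your index bookkeeping fail to reach $k=\min(N-1,r-4)$ as soon as $r\geqslant 6$. You get the numerator by composing the $\mathcal{C}^k$ family $t\mapsto\rho_t\in\B^{s'}$ with the $\mathcal{C}^k$ family $t\mapsto\mathcal{K}_t\in\L(\B^{s'},\B^{s'-k-\epsilon})$, quoting Remark \ref{transder} with the condition $\frac{3}{2}<s'<r-1$. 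But that remark is only a restatement of Proposition \ref{fonda}, whose items ii and iii require $s'>\frac{3}{2}+k$ (this is also what the underlying Lemma \ref{der} demands); the ``$+k$'' missing from the remark's wording cannot be dispensed with. Your $s'$ must therefore satisfy simultaneously $s'>\frac{3}{2}+k$ (so that $t\mapsto\mathcal{K}_t$ is $\mathcal{C}^k$ on $\B^{s'}$) and $s'<r-1-k$ (so that $t\mapsto\rho_t\in\B^{s'}$ is $\mathcal{C}^k$), which forces $2k<r-\frac{5}{2}$ --- strictly stronger than $k\leqslant r-4$ for $r\geqslant 6$. For instance with $r=8$ and $N$ large the lemma claims $k=4$, while your two constraints demand $s'\in\left]\frac{11}{2},3\right[=\emptyset$. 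The two derivative losses (one hidden in the regularity of $\Pi_t$, one in that of $\mathcal{K}_t$) compound, and since both constraints bear on the same exponent $s'$ this cannot be repaired by juggling indices within your scheme.

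The paper avoids the composition altogether: it writes $\mathcal{K}_t\Pi_t\phi=\frac{1}{2i\pi}\int_\Gamma z(z-\mathcal{K}_t)^{-1}\phi\,\mathrm{d}z$ and $\Pi_t\phi=\frac{1}{2i\pi}\int_\Gamma(z-\mathcal{K}_t)^{-1}\phi\,\mathrm{d}z$, takes $l\in(\B^0)'$, and invokes Lemma \ref{derres} exactly once: $t\mapsto(z-\mathcal{K}_t)^{-1}\in\L(\B^s,\B^{s-k-\epsilon})$ is $\mathcal{C}^k$ under the single constraint $\frac{5}{2}+k<s<r-1$, whose range is nonempty precisely when $k\leqslant r-4$. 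Since $s-k-\epsilon>0$, the fixed functional $l$ absorbs the loss of regularity, and differentiation under the contour integral concludes. That single use of the resolvent's regularity, in place of your product of two $t$-dependent families, is the missing idea.
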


\begin{proof}
Choose $s \in \left]\frac{5}{2}+k, r-1\right[$, $\phi \in \B^s$ and $l \in \(\B^0\)'$ such that $l\(\Pi_0 \phi\) \neq 0$, write
\[
e^{P_t\(g_t\)} = \frac{l\(\mathcal{K}_t \Pi_t \phi\) }{l\(\Pi_t \phi\)} = \frac{l\( \s{\Gamma}{z\(z-\mathcal{K}_t\)^{-1}\phi }{z}\)}{l\(\s{\Gamma}{\(z-\mathcal{K}_t\)^{-1}\phi}{z}\)} 
\]
and apply Lemma \ref{derres} (of course, one can work the same way on the neighbourhood of any point of $U$).
\end{proof}

\section{Regularity of dynamical determinants}\label{6}

We are now ready to prove regularity for dynamical determinants.

\begin{thm}\label{main}
Let $U$ be an open set of $\R^D$ with $0 \in U$. Let $r \geqslant 4$ be an integer and $N \in \N^*$.  Let $t\mapsto T_t \in \mathcal{C}^{r+1}\(S^1,S^1\)$ be a $\mathcal{C}^N$ function on $U$ whose values are expanding maps with uniform expansion constant $\lambda^{-1} > 1$ (that is, \eqref{dilatation} holds). Let $t  \mapsto g_t \in \mathcal{C}^r\(S^1\)$ be a $\mathcal{C}^N$ function on $U$. Set $k = \min \( \left\lfloor \frac{r}{2} - \frac{7}{4}\right\rfloor, r-4, N-1\)$. Then there are $R > e^{-P_0\(g_0\)}$ and a neighbourhood $W$ of $0$ in $U$ such that:
\begin{enumerate}[label=\roman*.]
\item for all $t \in W$, the power series
\[
\sum_{n \geqslant 1} \frac{1}{n} \sum_{T_t^n x = x} \frac{\exp\(g_{n,t}\(x\)\)}{1-\( \(T_t^n\)'\(x\) \)^{-1}} z^n
,\]
where $g_{n,t}$ is defined by \eqref{gmt}, has a non-zero convergence radius;
\item for all $t \in W$ the map
\[
z \mapsto \exp\(-\sum_{n \geqslant 1} \frac{1}{n} \sum_{T_t^n x = x} \frac{\exp\(g_{n,t}\(x\)\)}{1-\( \(T_t^n\)'\(x\) \)^{-1}} z^n\)
\]
extends on $\D{0}{R}$ to a holomorphic function $z \mapsto d\(z,t\)$;
\item for all $t \in W$, the real number $e^{-P_t\(g_t\)}$ is the unique zero of $d\(.,t\)$ and it is simple;
\item the map $ \(z,t\)  \mapsto d\(z,t\) $ is $\mathcal{C}^k$ on $\D{0}{R} \times W$.
\end{enumerate}
\end{thm}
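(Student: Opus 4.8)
The plan is to use the decomposition of the transfer operator from Proposition \ref{fonda} together with the multiplicativity properties of the flat determinant established in \S \ref{4}, and then to transfer the regularity in $t$ obtained for the nuclear part of the operator to the coefficients of the dynamical determinant. First I would fix a compact rectangle $K$ with $0$ in its interior and work on a neighbourhood of $0$ inside $K$. By Lemma \ref{ltrace} the power series in statement (i) is exactly $-\log \textrm{det}^{\flat}\(I - z \mathcal{K}_t\)$, so statements (i)--(iv) are statements about the flat determinant of $\mathcal{K}_t$ acting on some $\B^s$ with $\tfrac32 < s < r-1$.

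The key step is the following splitting. Fix an integer $L$ as in Lemma \ref{nullité}, and for $m$ a large multiple of $L$ write $\mathcal{K}_t^m = \(\mathcal{K}_t^m\)_b + \(\mathcal{K}_t^m\)_c$. The idea, exactly as in the proof of Proposition 3.15 of \cite{Bal2}, is to expand $\textrm{tr}^{\flat}\(\mathcal{K}_t^n\)$ by grouping the $n$ factors into blocks of size $L$ (plus a remainder block) and to split each block into its bounded and nuclear parts; by Lemma \ref{nullité} any term that is a product of $\geqslant 1$ bounded blocks and $0$ nuclear blocks has zero flat trace, so only the terms containing at least one nuclear block survive, and each such term is a composition involving a factor $\(\mathcal{K}_t^L\)_c$ (or $\(\mathcal{K}_t^r\)_c$) which is nuclear from $\B^s$ to $\B^{s'}$ for suitable $s' < r-1$. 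Using Lemma \ref{toujours} (or rather Lemma \ref{sousmult} together with Proposition \ref{fonda}(i)) the operator norms of the bounded blocks are controlled by $\rho^L$ with $\rho$ slightly bigger than $\lambda e^{P_0\(g_0\)}$, and the nuclear norms of the nuclear blocks by $C R^L$ with $R$ slightly bigger than $e^{P_0\(g_0\)}$; this gives an estimate of the form $\left|\textrm{tr}^{\flat}\(\mathcal{K}_t^n\)\right| \leqslant C n \tilde{\rho}^n$ for some $\tilde{\rho} < e^{P_0\(g_0\)}$, uniformly for $t$ near $0$, which yields (i) and shows that $d\(\cdot,t\)$ is holomorphic on a disc $\D{0}{R}$ with $R = \tilde{\rho}^{-1} > e^{-P_0\(g_0\)}$, proving (ii). For (iv), one differentiates the above expansion of each coefficient $a_n\(t\)$ of the flat determinant: each surviving term is a composition of resolvent-type operators $\(\mathcal{K}_t^{kL}\)$ and of the nuclear operators $\(\mathcal{K}_t^L\)_c$, $\(\mathcal{K}_t^r\)_c$, and by Proposition \ref{fonda}(iii) the latter depend $\mathcal{C}^k$-smoothly on $t$ as maps into $\L_{nuc}\(\B^s,\B^{s'}\)$ while by Remark \ref{transder} (or Proposition \ref{fonda}(ii)) the bounded powers depend $\mathcal{C}^k$-smoothly as maps into $\L\(\B^s,\B^{s-k-\epsilon}\)$; the trace is continuous on nuclear operators, so each coefficient $a_n\(t\)$ is $\mathcal{C}^k$ in $t$, with derivatives bounded by $C n^{\ell} \tilde{\rho}^n$ for $\left|\ell\right|$-th derivatives (the extra polynomial factor coming from the Leibniz rule over the $\mathcal{O}(n/L)$ blocks), so the series $\sum_n \partial_t^{\alpha} a_n\(t\) z^n$ converges locally uniformly on $\D{0}{R} \times W$, giving the joint $\mathcal{C}^k$-regularity in $(z,t)$; the budget for $k$, namely $k = \min\(\lfloor \tfrac r2 - \tfrac74\rfloor, r-4, N-1\)$, comes from keeping track of how many derivatives in $t$ are allowed (this is why $s > k + \tfrac32$ must be compatible with $s < r - 1$, forcing $2k + \tfrac52 < \cdots$, roughly $k < \tfrac r2 - \tfrac74$) together with the constraint $N-1$ from the smoothness of the data and the constraint $r-4$ coming from Lemma \ref{rmoinsq}.

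Finally, for (iii) I would use the spectral information from \S \ref{5}. By Lemma \ref{trouspec} and Lemma \ref{rg1}, for $t$ near $0$ the operator $\mathcal{K}_t$ on $\B^s$ has $e^{P_t\(g_t\)}$ as a simple eigenvalue and all other spectrum of modulus $< \eta$; writing $\mathcal{K}_t = e^{P_t\(g_t\)} \Pi_t + N_t$ with $\Pi_t$ the rank-one spectral projector and $N_t = (I - \Pi_t)\mathcal{K}_t$ of spectral radius $< \eta$, one has $\Pi_t N_t = N_t \Pi_t = 0$, so by the multiplicativity of the flat determinant (which applies since $\Pi_t$ is nuclear and all powers of $N_t$ have a flat trace by the same block argument as above, and $\textrm{tr}^{\flat}\(N_t^n\) = O(\eta_1^n)$ for some $\eta_1 < \eta$) we get $\textrm{det}^{\flat}\(I - z \mathcal{K}_t\) = \(1 - z e^{P_t\(g_t\)}\) \cdot \textrm{det}^{\flat}\(I - z N_t\)$, and the second factor is holomorphic and nonvanishing on $\D{0}{\eta_1^{-1}}$ — shrinking $R$ if necessary so that $R < \eta_1^{-1}$ — whence the unique zero of $d\(\cdot,t\)$ on $\D{0}{R}$ is $e^{-P_t\(g_t\)}$ and it is simple. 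The main obstacle is the bookkeeping in the block-expansion of $\textrm{tr}^{\flat}\(\mathcal{K}_t^n\)$: one must organize the combinatorics so that the vanishing from Lemma \ref{nullité} is used correctly, the $\rho^m$ and $R^m$ bounds are matched up to produce a geometric bound with ratio strictly below $e^{P_0\(g_0\)}$ uniformly in $t$, and the Leibniz rule applied to this expansion does not destroy the geometric decay after differentiating $k$ times in $t$ — this is precisely where the optimization that produces the value of $k$ in the statement takes place.
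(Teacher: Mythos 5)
Your overall strategy — block decomposition, Lemma \ref{nullité} to kill the pure ``bounded'' term, multiplicativity of the flat determinant to factor out the leading eigenvalue, and term-by-term differentiation of the coefficients — is the paper's strategy. But the order of the steps matters, and as written there is a gap in the passage from (i) to (ii).

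You claim that the block expansion of $\textrm{tr}^{\flat}\(\mathcal{K}_t^n\)$, after discarding the pure ``$b$''-term by Lemma \ref{nullité}, yields an estimate $\left|\textrm{tr}^{\flat}\(\mathcal{K}_t^n\)\right| \leqslant C n \tilde{\rho}^n$ with $\tilde{\rho} < e^{P_0\(g_0\)}$. This cannot be correct: by Lemma \ref{ltrace}, $\textrm{tr}^{\flat}\(\mathcal{K}_t^n\)$ is a sum over period-$n$ points with positive weights $e^{g_{n,t}\(x\)}/\(1-\(T_t^n\)'\(x\)^{-1}\)$ and grows like $e^{n P_t\(g_t\)}$, so no such $\tilde{\rho}$ exists. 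Concretely, in the telescoping expansion
\[
\mathcal{K}_{t}^m = \(\mathcal{K}_t^L\)_b^q \(\mathcal{K}_t^r\)_b + \mathcal{K}_{t}^{qL} \(\mathcal{K}_{t}^r\)_{c} + \sum_{k=0}^{q-1} \mathcal{K}_{t}^{kL} \(\mathcal{K}_t^L\)_{c} \(\mathcal{K}_t^L\)_b^{q-k-1} \(\mathcal{K}_t^r\)_b,
\]
the factor $\mathcal{K}_t^{kL}$ to the left of the nuclear block has operator norm comparable to $e^{P_0\(g_0\) kL}$, so the dominant summand has size $\sim e^{P_0\(g_0\) n}$, not $\tilde{\rho}^n$. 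This is precisely why the paper first performs the spectral splitting $\mathcal{K}_t = e^{P_t\(g_t\)}\Pi_t + \mathcal{K}_{1,t}$ (your step (iii)), obtains $d\(z,t\) = \(1-z e^{P_t\(g_t\)}\)\,\textrm{det}^{\flat}\(I - z\mathcal{K}_{1,t}\)$ from the multiplicativity of the flat determinant, and then applies the block expansion to $\mathcal{K}_{1,t}^m$: there the full-operator factor $\mathcal{K}_{1,t}^{kL}$ can be represented by a contour integral $\frac{1}{2i\pi}\int_\gamma z^{kL}\(z-\mathcal{K}_t\)^{-1}\,\mathrm{d}z$ over a circle of radius slightly above $\eta$ and so has norm $\sim \rho_2^{kL}$ with $\rho_2 < e^{P_0\(g_0\)}$, which is what produces $\left|\textrm{tr}^{\flat}\(\mathcal{K}_{1,t}^m\)\right| \leqslant C' \rho^m$ with $\rho < e^{P_0\(g_0\)}$, hence (ii) and eventually (iv). In short: you need to do your ``(iii)'' before your ``(i)--(ii)'', and the block estimate must be run on $\mathcal{K}_{1,t}^m$, not on $\mathcal{K}_t^m$. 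Once the steps are reordered in this way, the remaining ingredients you invoke (Proposition \ref{fonda}, Lemma \ref{nullité}, Lemma \ref{sommation}, Lemma \ref{derres} for the $t$-regularity of the resolvent, and the chain of Sobolev indices that produces the constraint on $k$) match the paper's proof.
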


\begin{proof}
For the sake of simplicity, we write the proof when $k=1$. Thus we have $r \geqslant 6$. 

Set $s_0 = 2,6$, $s_1= 3,6$, $s_2=3,75$, $s_3=4$ and $s_4 = 4,8$, and choose a neighbourhood $W$ of $0$ in $K$ such that there is a constant $C$ such that
\begin{enumerate}[label=\arabic*.]
\item for all $t \in W$, the spectrum of $\mathcal{K}_t $ acting on $\B^{s_0}$, on $\B^{s_2}$, on $\B^{s_3}$ and on $\B^{s_4}$ is contained in $V_{\delta,\eta}$;
\item for all $t \in W$ and $z \in \C \setminus V_{\delta,\eta}$:
\begin{equation*}
\left\| \(z-\mathcal{K}_t\)^{-1} \right\|_{\B^{s_2} \to \B^{s_2}} \leqslant C
;\end{equation*}
\item for all $z \in \C \setminus V_{\delta,\eta} $ the map
\[
t \mapsto \(z-\mathcal{K}_t\)^{-1} \in \L\(\B^{s_3},\B^{s_2}\)
\] 
is continuous on $W$ with $\mathcal{C}^0$ norm bounded by $C$;
\item for all $z \in \C \setminus V_{\delta,\eta} $ the map
\[
t \mapsto \(z-\mathcal{K}_t\)^{-1} \in \L\(\B^{s_4},\B^{s_2}\)
\] 
is $\mathcal{C}^1$ on $W$ with $\mathcal{C}^1$ norm bounded by $C$;
\item for all $t \in W$ the spectral projection $\Pi_t :\B^{s_0} \to \B^{s_1}$ has rank one;
\item the map
\[
t \mapsto \Pi_t \in \L_{nuc}\(\B^{s_0}, \B^{s_4}\)
\] 
is continuous on $W$ with $\mathcal{C}^0$ norm bounded by $C$;
\item the map 
\[
t \mapsto \Pi_t \in \L_{nuc}\(\B^{s_1},\B^{s_3}\)
\]
is $\mathcal{C}^1$ on $W$ with $\mathcal{C}^1$ norm bounded by $C$;
\item there is $\rho_0 \in \left] \lambda e^{P_0\(g_0\)}, e^{P_0\(g_0\)} \right[$ such that for all $t \in W$, $m \in \N^*$ and $i \in \set{0;2}$
\begin{equation*}
\left\|\(\mathcal{K}_t^m\)_b\right\|_{\L\(\B^{s_i}, \B^{s_i}\)} \leqslant C \rho_0^m.
\end{equation*}
\end{enumerate}
This may be done thanks to the results of \S \ref{5}.

Than for all $t \in W$ write $\mathcal{K}_{0,t} = \Pi_t \mathcal{K}_t = e^{P_t\(g_t\)} \Pi_t$ and $\mathcal{K}_{1,t} = \(Id - \Pi_t\) \mathcal{K}_t$, where $\mathcal{K}_t$ is seen as acting on $\B^{s_2}$, then we have
\[
\mathcal{K}_t = \mathcal{K}_{0,t} + \mathcal{K}_{1,t} \textrm{ and } \mathcal{K}_{1,t} \mathcal{K}_{0,t} = \mathcal{K}_{0,t} \mathcal{K}_{1,t} = 0
\]
and for all $m \in \N^*$ the flat traces of $\mathcal{K}_t^m$ and of $\mathcal{K}_{0,t}^m$ (which has rank one) are well-defined and thus the trace of $\mathcal{K}_{1,t}^m$ too. Consequently, we have
\begin{equation*}
d\(z,t\) = \textrm{det}^{\flat} \(I-z \mathcal{K}_t\) = \textrm{det}^{\flat}\(I-z \mathcal{K}_{0,t}\) \textrm{det}^{\flat}\(I- z \mathcal{K}_{1,t}\) = \(1-z e^{P_t\(g_t\)}\) \textrm{det}^{\flat}\(I- z \mathcal{K}_{1,t}\)
\end{equation*}
as formal power series. Then set, for all $m \in \N^*$ and $t \in W$, $h_m\(t\) = \textrm{tr}^{\flat}\(\mathcal{K}_{1,t}^m\) $. We want to show that the $\mathcal{C}^1$ norm of $h_m$ is bounded by $C' \rho^m$ for some constants $C' >0$ and $\rho < e^{P_0\(g_0\)}$. To do that, write for all $m \in \N^*$ and all $t \in W$
\begin{equation*}
\mathcal{K}_{1,t}^m = \(\mathcal{K}_t^m\)_b + \(\mathcal{K}_{1,t}^m\)_{cc} \textrm{ where } \(\mathcal{K}_{1,t}^m\)_{cc} = \mathcal{K}_{1,t}^m - \(\mathcal{K}_t^m\)_b = \(\mathcal{K}_t^m\)_c - \mathcal{K}_{0,t}^m
.\end{equation*}
Then choose $\rho_1 \in \left]\rho_0, e^{P_0\(g_0\)}\right[$ and an integer $L$ large enough so that $c \rho_0^L \leqslant \rho_1^L$ and Lemma \ref{nullité} holds for $s = s_2$. Then if $m \geqslant L$ write $m = qL + r$ with $L \leqslant r < 2L$ and then
\begin{equation*}
\mathcal{K}_{1,t}^m = \(\mathcal{K}_t^L\)_b^q \(\mathcal{K}_t^r\)_b + \mathcal{K}_{1,t}^{qL} \(\mathcal{K}_{1,t}^r\)_{cc} + \sum_{k=0}^{q-1} \mathcal{K}_{1,t}^{kL} \(\mathcal{K}_t^L\)_{cc} \(\mathcal{K}_t^L\)_b^{q-k-1} \(\mathcal{K}_t^r\)_b
.\end{equation*}
This implies with Lemma \ref{nullité}
\begin{equation*}
h_m\(t\) = \textrm{tr}\( \mathcal{K}_{1,t}^{qL} \(\mathcal{K}_{1,t}^r\)_{cc} \) + \sum_{k=0}^{q-1} \textrm{tr}\( \mathcal{K}_{1,t}^{kL} \(\mathcal{K}_t^L\)_{cc} \(\mathcal{K}_t^L\)_b^{q-k-1} \(\mathcal{K}_t^r\)_b \)
.\end{equation*}
As in the proof of Lemma \ref{derres}, one may prove using in particular properties 3,6 and 7 above that the nuclear operator from $\B^{s_2}$ to itself $\mathcal{K}_{1,t}^{qL} \(\mathcal{K}_{1,t}^r\)_{cc} $ has a $\mathcal{C}^1$ dependence in $t$ with for $i \in \set{1,\dots,d}$:
\begin{align*}
\left\| \drond{}{t_i}\(\mathcal{K}_{1,t}^{qL} \(\mathcal{K}_{1,t}^r\)_{cc} \) \right\|_{\L_{nuc}\(\B^{s_2} , \B^{s_2}\)} & = \left\| \drond{}{t_i}  \(\mathcal{K}_{1,t}^{qL}\) \(\mathcal{K}_{1,t}^r\)_{cc}  +  \mathcal{K}_{1,t}^{qL} \drond{}{t_i}\(\(\mathcal{K}_{1,t}^r\)_{cc} \) \right\|_{\L_{nuc}\( \B^{s_2} , \B^{s_2}\)} \\
  & \leqslant \frac{1}{2 \pi} \left\| \s{\gamma}{z^{qL} \drond{}{t_i} \( \(z- \mathcal{K}_t\)^{-1}\)}{z} \right\|_{\L\(\B^{s_4}, \B^{s_2}\)} \left\| \(\mathcal{K}_{1,t}^r\)_{cc} \right\|_{\L_{nuc}\( \B^{s_2} , \B^{s_4}\)} \\ & +  \frac{1}{2 \pi} \left\| \s{\gamma}{z^{qL} \(z- \mathcal{K}_t\)^{-1}}{z}\right\|_{\L\(\B^{s_2},\B^{s_2}\)} \left\| \drond{}{t_i}\( \(\mathcal{K}_{1,t}^r\)_{cc}  \)\right\|_{\L_{nuc}\( \B^{s_2}, \B^{s_2}\)} \\
  & \leqslant C' \rho_2^m,
\end{align*}
where $\gamma$ is a circle of center $0$ and of radius $\rho_2$ slightly greater than $\eta$ and $C'$ is some constant. If $k \in \set{0,\dots, q-1}$ we can write in the same way $\drond{}{t_i} \( \mathcal{K}_{1,t}^{kL} \(\mathcal{K}_t^L\)_{cc} \(\mathcal{K}_t^L\)_b^{q-k-1} \(\mathcal{K}_t^r\)_b\) $ in the same way as
\begin{align*}
 &  \frac{1}{2i \pi} \(\s{\gamma}{z^{kL} \drond{}{t_i} \( \(z-\mathcal{K}_t\)^{-1} \)}{z}\) \(\mathcal{K}_t^L\)_{cc} \(\mathcal{K}_t^L\)_b^{q-k-1} \(\mathcal{K}_t^r\)_b \\
  & \quad + \frac{1}{2i \pi} \(\s{\gamma}{ z^{kL} \(z-\mathcal{K}_t\)^{-1}}{z} \) \drond{}{t_i}\(\(\mathcal{K}_t^L\)_{cc} \) \(\mathcal{K}_t^L\)_b^{q-k-1} \(\mathcal{K}_t^r\)_b \\
  & \quad + \sum_{j=0}^{q-k-1} \frac{1}{2i \pi} \(\s{\gamma}{ z^{kL} \(z-\mathcal{K}_t\)^{-1}}{z}\) \(\mathcal{K}_t^L\)_{cc} \(\mathcal{K}_t^L\)_b^j \drond{}{t_i} \( \(\mathcal{K}_t^L \)_b\) \(\mathcal{K}_t^L\)_b^{q-k-j-2} \(\mathcal{K}_t^r\)_b \\
  & \quad + \mathcal{K}_{1,t}^{kL} \(\mathcal{K}_t^L\)_{cc} \(\mathcal{K}_t^L\)_b^{q-k-1} \drond{}{t_i} \(\(\mathcal{K}_t^r\)_b\)
\end{align*}
and thus
\begin{align*}
& \left\| \drond{}{t_i} \( \mathcal{K}_{1,t}^{kL} \(\mathcal{K}_t^L\)_{cc} \(\mathcal{K}_t^L\)_b^{q-k-1} \(\mathcal{K}_t^r\)_b\) \right\|_{\L_{nuc}\( \B^{s_2}, \B^{s_2}\)}  \\ 
& \leqslant \frac{1}{2 \pi} \left\| \s{\gamma}{z^{kL} \drond{}{t_i} \( \(z-\mathcal{K}_t\)^{-1} \)}{z} \right\|_{\L\(\B^{s_4}, \B^{s_2}\)} \left\| \(\mathcal{K}_t^L\)_{cc} \right\|_{\L_{nuc}\( \B^{s_2} , \B^{s_4}\)}  \left\| \(\mathcal{K}_t^L\)_b \right\|_{\L\(\B^{s_2}, \B^{s_2}\)}^{q-k-1} \left\| \(\mathcal{K}_t^r\)_b \right\|_{\L\(\B^{s_2}, \B^{s_2}\)} \\
&  + \frac{1}{2 \pi} \left\| \s{\gamma}{z^{kL} \(z-\mathcal{K}_t\)^{-1}}{z} \right\|_{\L\(\B^{s_2}, \B^{s_2}\)} \left\| \drond{}{t_i}\(\(\mathcal{K}_t^L\)_{cc} \)  \right\|_{\L_{nuc}\( \B^{s_2}, \B^{s_2}\)}  \left\| \(\mathcal{K}_t^L\)_b \right\|_{\L\(\B^{s_2},\B^{s_2}\)}^{q-k-1} \left\| \(\mathcal{K}_t^r\)_b \right\|_{\L\(\B^{s_2}, \B^{s_2}\)} \\
 & + \sum_{j=0}^{q-k-1} \( \frac{1}{2 \pi}  \left\| \s{\gamma}{z^{kL} \(z-\mathcal{K}_t\)^{-1}}{z} \right\|_{\L\(\B^{s_2}, \B^{s_2}\)} \left\| \(\mathcal{K}_t^L\)_{cc} \right\|_{\L_{nuc}\(\B^{s_0}, \B^{s_2}\)} \left\| \(\mathcal{K}_t^L\)_b \right\|_{\L\(\B^{s_0}, \B^{s_0}\)}^j \right. \\ & \left. \times \left\|\drond{}{t_i} \(\(\mathcal{K}_t^L\)_b\) \right\|_{\L\(\B^{s_2}, \B^{s_0}\)} \left\| \(\mathcal{K}_t^L\)_b\right\|_{\L\(\B^{s_2}, \B^{s_2}\)}^{q-k-j-2} \left\|\(\mathcal{K}_t^r\)_b \right\|_{\L\(\B^{s_2}, \B^{s_2}\)} \) \\
 & + \frac{1}{2 \pi} \left\| \s{\gamma}{z^{kL} \(z-\mathcal{K}_t\)^{-1}}{z} \right\|_{\L\(\B^{s_2},\B^{s_2}\)} \left\| \(\mathcal{K}_t^L\)_{cc} \right\|_{\L_{nuc}\( \B^{s_0} , \B^{s_2}\)}  \left\|\(\mathcal{K}_t^L\)_b \right\|_{\L\(\B^{s_0},\B^{s_0}\)} \left\| \drond{}{t_i} \(\(\mathcal{K}_t^r\)_b \)\right\|_{\L\(\B^{s_2}, \B^{s_0}\)} \\ 
 & \leqslant C_2 m \rho_3^m
\end{align*}
where $\rho_3 = \max \(\rho_2,\rho_0\)$ and $C^2$ is some constant. Consequently we have for $t \in W$ and $m \geqslant L$
\begin{equation*}
\left| \drond{}{t_i} h_m\(t\) \right| \leqslant C_2 m^2 \rho_3^m \leqslant C \rho^m
.\end{equation*}
This ends the proof of the case $k=1$.

To deal with the general case let $\mathcal{K}_t$ act on $\B^s$ with $s$ a real number which satisfies $ \frac{5}{2}+k < s < r-1-k$ and write out the higher order partial derivatives of $h_m$ using the same tricks.
\end{proof}

Finally, we briefly recall the proof of Lemma 3.1 of \cite{Poll} to get formulae \eqref{valeur} and \eqref{derivee} in a differentiable setting.

\begin{cor}\label{adaptation}
Let $\tau \mapsto T_\tau$ be a $\mathcal{C}^2$ curve, defined on a neighbourhood $\left]-\epsilon,\epsilon \right[$ of $0$, of $\mathcal{C}^7$ expanding maps of the circle. Let $g : S^1 \to \R$ be a $\mathcal{C}^6$ map. Then there is some $R >1$ such that for all $u,t$ sufficiently close to $0$ the map defined by \eqref{determinant} extends on the disc of center $0$ and radius $R$ to a holomorphic function $z \mapsto d\(z,u,t\)$. The function $d$ defined this way is $\mathcal{C}^1$ and the formula \eqref{valeur} holds for sufficiently small $t$.

If $\tau \mapsto T_\tau$ is a $\mathcal{C}^3$ curve of $\mathcal{C}^8$ expanding maps of the circle and $g$ is $\mathcal{C}^7$, then $d$ is $\mathcal{C}^2$ and the formula \eqref{derivee} for linear response holds.
\end{cor}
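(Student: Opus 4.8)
The plan is to deduce the corollary from Theorem \ref{main} applied with $D = 2$, parameter $t = \(\tau,u\) \in \R^2$, and the choices $T_t = T_{\(\tau,u\)} = T_\tau$, $g_t = g_{\(\tau,u\)} = - u g - \log\left|T_\tau'\right|$. The first point to check is that these choices make the series of Theorem \ref{main}(i) coincide with the one defining \eqref{determinant}: since $\sum_{k=0}^{n-1}\log\left|T_\tau'\(T_\tau^k x\)\right| = \log\left|\(T_\tau^n\)'\(x\)\right|$, one has
\[
\frac{\exp\(-u\sum_{k=0}^{n-1} g\(T_\tau^k x\)\)}{\left|\(T_\tau^n\)'\(x\)-1\right|} = \frac{\exp\(g_{n,\(\tau,u\)}\(x\)\)}{1-\(\(T_\tau^n\)'\(x\)\)^{-1}}.
\]
Under the first set of hypotheses, $t\mapsto T_t$ is $\mathcal{C}^2$ with values in $\mathcal{C}^7\(S^1,S^1\)$ and $t\mapsto g_t$ is $\mathcal{C}^2$ with values in $\mathcal{C}^6\(S^1\)$ (because $T_\tau\in\mathcal{C}^7$ forces $\log\left|T_\tau'\right|\in\mathcal{C}^6$, the logarithm being harmless since $\left|T_\tau'\right|\geqslant\lambda^{-1}>0$), so Theorem \ref{main} applies with $r = 6$ and $N = 2$, for which the integer it produces is $k = 1$; under the second set of hypotheses one applies Theorem \ref{main} in the same way with parameters giving $k \geqslant 2$. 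Finally $g_{\(0,0\)} = -\log\left|T_0'\right|$, and the topological pressure of $-\log\left|T'\right|$ of an expanding map of the circle vanishes (Rokhlin's entropy formula for the absolutely continuous invariant measure), so $e^{-P_0\(g_0\)} = 1$ and the radius $R$ given by Theorem \ref{main} satisfies $R>1$; assertions (i)--(iii) of the corollary are then precisely assertions (i)--(iii) of Theorem \ref{main}.

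It remains to obtain \eqref{valeur} and \eqref{derivee}, following the computation recalled in the proof of Lemma 3.1 of \cite{Poll}. Writing $t = \(\tau,u\)$ and splitting $\mathcal{K}_t = \mathcal{K}_{0,t} + \mathcal{K}_{1,t}$ into the rank-one leading part and the rest as in the proof of Theorem \ref{main}, one has the factorization
\[
d\(z,u,\tau\) = \(1 - z\,\lambda\(\tau,u\)\)\,D\(z,u,\tau\),
\]
where $\lambda\(\tau,u\) = e^{P_\tau\(g_{\(\tau,u\)}\)}$ is the leading eigenvalue of $\mathcal{K}_t$ and $D\(z,u,\tau\) = \exp\(-\sum_{m\geqslant1}\frac{\tr^{\flat}\(\mathcal{K}_{1,t}^m\)}{m}z^m\)$. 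The bound $\left|\tr^{\flat}\(\mathcal{K}_{1,t}^m\)\right| \leqslant C\rho^m$ with $\rho<1$ established in the proof of Theorem \ref{main} shows, after shrinking $R$ if necessary so that $R<\rho^{-1}$ (still with $R>1$), that $D\(\cdot,u,\tau\)$ is the exponential of a function holomorphic on $\D{0}{R}$, hence nowhere zero there, and that $\(z,u,\tau\)\mapsto D\(z,u,\tau\)$ inherits the $\mathcal{C}^k$ regularity of $d$. Since $g_{\(\tau,0\)} = -\log\left|T_\tau'\right|$ we have $\lambda\(\tau,0\) = 1$, and differentiating the factorization at $\(z,u\) = \(1,0\)$ gives
\[
\drond{d}{z}\(1,0,\tau\) = -D\(1,0,\tau\), \qquad \drond{d}{u}\(1,0,\tau\) = -\drond{\lambda}{u}\(\tau,0\)\,D\(1,0,\tau\).
\]
By classical thermodynamic formalism, $\drond{\lambda}{u}\(\tau,0\) = \lambda\(\tau,0\)\left.\drond{}{u}P_\tau\(-ug-\log\left|T_\tau'\right|\)\right|_{u=0} = -\s{S^1}{g}{\mu_\tau}$, the equilibrium state of $-\log\left|T_\tau'\right|$ being the absolutely continuous invariant measure $\mu_\tau$. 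Dividing the two identities above yields \eqref{valeur}, the denominator $\drond{d}{z}\(1,0,\tau\)$ being nonzero because $z=1$ is a simple zero of $d\(\cdot,0,\tau\)$ by Theorem \ref{main}(iii).

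For \eqref{derivee} I would simply differentiate \eqref{valeur} with respect to $\tau$ at $\tau=0$: under the stronger hypotheses $d$ is $\mathcal{C}^2$ on $\D{0}{R}\times W$ and $\drond{d}{z}\(1,0,0\)\neq 0$, so $\tau\mapsto -\drond{d}{u}\(1,0,\tau\)/\drond{d}{z}\(1,0,\tau\)$ is $\mathcal{C}^1$ near $0$ and the quotient rule produces exactly \eqref{derivee}.

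The bulk of the argument is thus absorbed into Theorem \ref{main}; the only genuinely additional ingredients are the factorization $d = \(1-z\lambda\)D$ with $D$ a nonvanishing holomorphic factor of the same regularity as $d$ (already present in the proof of Theorem \ref{main}, where $\mathcal{K}_{1,t}$ and the estimate $\left|\tr^{\flat}\(\mathcal{K}_{1,t}^m\)\right|\leqslant C\rho^m$, $\rho<1$, appear) and the standard formula for the derivative of the topological pressure. The point that requires care is the bookkeeping relating the smoothness hypotheses on $\tau\mapsto T_\tau$ and on $g$ to the value of the integer $k$ in Theorem \ref{main}, so as to reach $\mathcal{C}^1$ (resp. $\mathcal{C}^2$) joint dependence of $d$.
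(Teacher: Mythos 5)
The proposal follows the same high-level strategy as the paper: specialize Theorem \ref{main} to $t = (\tau,u)$, $T_t = T_\tau$, $g_t = -ug - \log|T_\tau'|$, and then deduce \eqref{valeur} and \eqref{derivee}. Where you diverge is in the derivation of \eqref{valeur}. The paper writes $z(u,\tau) = e^{-P_\tau(-ug - \log|T_\tau'|)}$, notes it is the unique simple zero of $d(\cdot,u,\tau)$ by Theorem \ref{main}(iii), invokes Lemma \ref{rmoinsq} (or the implicit function theorem) for its regularity, and differentiates the identity $d(z(u,\tau),u,\tau)=0$ in $u$ at $u=0$. You instead reach back inside the proof of Theorem \ref{main}, extract the factorization $d = (1-z\lambda)D$ with $D$ nonvanishing and as regular as $d$, and differentiate that directly. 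The two routes use the same inputs (simplicity of the zero at $z=1$ and Ruelle's formula for the derivative of the pressure) and give the same computation; the paper's version remains at the level of the statement of Theorem \ref{main} plus Lemma \ref{rmoinsq}, whereas yours re-imports an intermediate object from its proof. Both are sound.

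There is, however, a place where the bookkeeping you rightly singled out as the delicate step is not actually carried out. For the second part you assert that Theorem \ref{main} ``gives $k \geqslant 2$'' without computing the value. With $T_\tau \in \mathcal{C}^8$ and $g \in \mathcal{C}^7$ one has $r=7$ and $N=3$, and the formula of Theorem \ref{main} yields $k = \min(\lfloor 7/2 - 7/4\rfloor,\, 7-4,\, 3-1) = \min(1,3,2) = 1$, not $2$; to reach $k=2$ from the stated formula one needs $r \geqslant 8$, i.e.\ $\mathcal{C}^9$ maps and $g \in \mathcal{C}^8$. This off-by-one is inherited from the paper, whose own proof of the corollary also declares the regularity of $d$ an ``immediate consequence'' of Theorem \ref{main} without running the arithmetic. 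Your approach is not at fault, but the claim $k\geqslant 2$ is unsubstantiated as written and should be verified (and, in fact, flagged) rather than asserted.
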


\begin{proof}
The definition and regularity of the function $d$ are immediate consequences of Theorem \ref{main} taking $U= \left]-\epsilon,\epsilon\right[ \times \R$, $t=\(\tau,u\)$, $g_{\tau,u} = - u g - \log \left|T_\tau'\right|$ and $T_{\tau,u} = T_\tau$.

The end of the proof is just a reminder of the proof of Lemma 3.1 in \cite{Poll}. 

Write $z\(u,\tau\) = \exp\(- P_\tau\(-u g - \log \left|T_\tau'\right|\)\)$ (with $P_\tau = P_t$ the topological pressure of the dynamics $T_\tau$). For $u,t$ sufficiently small, $z\(u,\tau\)$ is in the disc of radius $R$ and center $0$. Notice that the regularity of the function $u,t \mapsto z\(u,t\)$ can be seen as a consequence of Lemma \ref{rmoinsq} or of the implicit function theorem. Differentiating the equation $d\(z\(u,\tau\),u,\tau\) = 0$ we get $\eqref{valeur}$ recalling that
\[
\left. \drond{}{u}\(P_\tau\(-u g - \log \left|T_\tau'\right|\)\)\right|_{u=0} = - \s{S^1}{g}{\mu_\tau}
.\]
See for example \S 7.28 ans \S 7.12 in \cite{Ruelle2} for a proof of this formula. The formula \eqref{derivee} for linear response is obtained by differentiating \eqref{valeur}.
\end{proof}

\begin{rmq}\label{fin}
Suppose that the hypotheses of Corollary \ref{adaptation} hold. Let $d$ be the function introduced in Corollary \ref{adaptation}. Write it as the sum of a power series
\begin{equation*}
d\(z,u,\tau\) = \sum_{n \geqslant 0} a_n\(u,\tau\) z^n
.\end{equation*}
Notice that this sum converges as soon as $\left|z\right| < R$. Then the formula \eqref{derivee} may be written
\begin{equation}\label{explicite}
\left. \drond{}{\tau}\( \s{S^1}{g}{\mu_\tau} \) \right|_{\tau=0} = - \frac{\sum_{n \geqslant 1} \ddeux{a_n}{u}{\tau}\(0,0\)}{\sum_{n \geqslant 1}n a_n\(0,0\)} + \frac{\(\sum_{n \geqslant 1} n \drond{a_n}{\tau}\(0,0\)\) \( \sum_{n \geqslant 1} \drond{a_n}{u}\(0,0\)\)}{\( \sum_{n \geqslant 1} n a_n\(0,0\) \)^2}.
\end{equation}

As pointed out in the paragraph 4.1 of \cite{Poll}, one can explicitly calculate the terms of the series that appear in \eqref{explicite} in terms of the periodic points of $T_0$ and of the derivatives of $\tau \mapsto T_\tau$ at $0$. To make the calculation easier we shall suppose that the $T_\tau$ are orientation-preserving (we get the orientation-reversing case by some sign changes). Then, setting 
\[
b_n\(u,\tau\) = \sum_{T_\tau^n x = x} \frac{\exp\(-u \sum_{k=0}^{n-1} g\(T_\tau^k x\)\)}{\left| \(T_\tau^n\)'\(x\)-1\right|} = \sum_{T_\tau^n x = x} \frac{\exp\(-u \sum_{k=0}^{n-1} g\(T_\tau^k x\)\)}{\(T_\tau^n\)'\(x\)-1} 
,\]
we find, using formulae \eqref{flatdet}, \eqref{coeffdet} and \eqref{trace}, that the $a_n$ are recursively defined by
\[
a_0 = 1 \textrm{ and } a_n  = - \frac{1}{n} \sum_{j=0}^{n-1} a_j b_{n-j} 
.\]
We immediately deduce similar formulae for the derivatives of the $a_n$:
\[
\drond{a_n}{u} = \drond{a_n}{\tau} = \ddeux{a_n}{u}{\tau} =0,
\]
\[
\drond{a_n}{u}  = - \frac{1}{n} \sum_{j=0}^{n-1} \(\drond{a_j}{u} b_{n-j}+ a_j \drond{b_{n-j}}{u} \), \quad
\drond{a_n}{\tau}  = - \frac{1}{n} \sum_{j=0}^{n-1} \(\drond{a_j}{\tau} b_{n-j}+ a_j \drond{b_{n-j}}{\tau} \),
\]
and
\[
\ddeux{a_n}{u}{\tau}  = - \frac{1}{n} \sum_{j=0}^{n-1} \( \ddeux{a_j}{u}{\tau} b_{n-j}  + \drond{a_j}{u}\drond{b_{n-j}}{\tau} + \drond{a_j}{\tau} \drond{b_{n-j}}{u} + a_j \ddeux{b_{n-j}}{u}{\tau} \).
\]
In order to apply these formulae, we need to calculate the derivatives of $b_n$. To do so we set: $X_n = \left. \drond{}{\tau} \(T_\tau^n\) \right|_{\tau = 0}$ for all non-zero integer $n$ and $X = X_1$. Then we have
\[
X_n = \sum_{k=0}^{n-1} X \circ T_0^k \(T_0^{n-1-k}\)' \circ T_0^k,
\]
\[
\drond{b_n}{u}\(0,0\) = - \sum_{T_0^n x = x}  \frac{\sum_{k=0}^{n-1} g\(T_0^k x\)}{\(T_0^n\)'\(x\)-1}, \quad
\drond{b_n}{\tau}\(0,0\) = - \sum_{T_0^n x = x} \frac{X_n'\(x\) + \(T_0^n\)'' \(x\) \frac{X_n\(x\)}{1 - \(T_0^n\)' \(x\) }}{\(\(T_0^n\)' \(x\) -1\)^2}
\]
and
\[
\ddeux{b_n}{u}{\tau}\(0,0\) = \sum_{T_0^n x = x} \( \frac{\sum_{k=0}^{n-1} g \circ T_0^k }{\(\(T_0^n\)'-1\)^2} \(\(T_0^n\)'' \frac{X_n}{1-\(T_0^n\)'} + X_n'\) + \frac{1}{\(T_0^n\)'-1} \sum_{k=0}^{n-1} \frac{X_n \circ T_0^k g' \circ T_0^k}{\(T_0^n\)' \circ T_0^k - 1} \)\(x\)
.\]

Finally using Cauchy's formula and differentiation under the integral, the partial derivatives of $d$ are holomorphic on the disc of center and radius $R$. Thus the power series that appear in formula \eqref{explicite} converge exponentially fast (we get a much faster convergence in the analytic setting, see \cite{Poll}).
\end{rmq}

\bibliographystyle{plain}
\bibliography{biblio}

\end{document}